\documentclass[11pt, a4paper]{amsart}
\usepackage{graphicx} 
\usepackage{amsfonts, amssymb, amsmath, amsthm, amscd, mathtools, multicol, tikz, tikz-cd, caption, enumerate, mathrsfs, thmtools, cite, enumitem}
\usetikzlibrary{shapes, arrows.meta, positioning}
\usepackage{inputenc}
\usepackage[foot]{amsaddr}
\usepackage[pagebackref=true, colorlinks, linkcolor=blue, citecolor=red]{hyperref}
\usepackage{latexsym}
\usepackage{fullpage}
\usepackage{microtype}
\usepackage{subfiles} 
\setlist{topsep=0.5em, itemsep=1em} 

\makeatletter
\makeindex 
\newcommand{\be}{\begin{equation}}
\newcommand{\ee}{\end{equation}}
\newcommand{\beano}{\begin{eqn*}} 
\newcommand{\eeano}{\end{eqnarray*}}
\newcommand{\ba}{\begin{array}}
\newcommand{\ea}{\end{array}}
\declaretheoremstyle[headfont=\normalfont]{normalhead}
\theoremstyle{plain} 
\newtheorem{theorem}{Theorem}[section]
\theoremstyle{definition}

\theoremstyle{definition}

\newtheorem{lemma}[theorem]{Lemma}
\newtheorem{corollary}[theorem]{Corollary}
\newtheorem{proposition}[theorem]{Proposition}
\newtheorem{remark}[theorem]{Remark}
\newtheorem{example}[theorem]{Example}
\newtheorem{question}[theorem]{Question}

\newcommand{\Aut}{\mathrm{Aut}}

\newcommand{\Sl}{\mathfrak{sl}}
\newcommand{\F}{\mathbb{F}}
\numberwithin{equation}{section}

\newcommand{\img}{\mathrm{image}}

\begin{document}
\title{Images of Lie Polynomials on simple Lie algebras}
\author{Harish Kishnani}
\email{harishkishnani11@gmail.com}
\author{Anupam Singh}
\email{anupamk18@gmail.com}
\address{Indian Institute of Science Education and Research Pune, Dr Homi Bhabha Road, Pashan, Pune 411008 India}

\thanks{The first-named author acknowledges the support of ANRF-NPDF grant PDF/2025/002961 for the fellowship. The second-named author is funded by an ANRF-MATRICS Grant ANRF/ARGM/2025/000095/MTR}
\subjclass[2020]{17B01,17B20,15B30}
\keywords{Lie maps, Chevalley Lie algebra, Simple Lie algebras over $\mathbb F_q$, $\Sl_2(q)$} 

\begin{abstract}
A Lie polynomial is an element of a free Lie algebra $\mathcal F_k$ on $k$-generators, which defines a Lie map on a given Lie algebra $L$, by substituting $k$-elements of $L$. Similar to word maps on groups and polynomial maps on algebras, one studies here questions analogous to Waring-like problems, the L'vov-Kaplansky conjecture, etc. In this article, we would like to address a problem for Lie algebras parallel to the one Lubotzky solved (Images of word maps in finite simple groups, Glasg. Math. J., 56, no. 2, 465-469, 2014) for finite simple groups. It is easy to verify that the image of a Lie map is (a) closed under automorphism, and (b) contains $0$. In this article, we prove that for a simple Chevalley algebra over a finite field of ``very good'' characteristic, these two properties are enough to classify all possible subsets that can be the image of a Lie polynomial. The next question is to find such Lie polynomials for a given subset satisfying the two properties. Contrary to the results over an algebraically closed field, we find Lie polynomials in the case of Lie algebra $\Sl_2(q)$, for $q$ odd, which give each $\rm{GL}_2(q)$ conjugacy class together with zero as an image.  
\end{abstract}
\maketitle
\section{Introduction}
An element $w$ of a free group on $k$-generators defines a word map on a group $G$. Images and fibers of such maps (see, for example~\cite{LST11, GKP16, GLOST18}) have been extensively studied in the context of Waring-like problems, including Ore's conjecture. Several fundamental results have been proved in this regard over the last $3$ decades, especially for finite simple groups, Lie groups, and algebraic groups. An analogous problem, motivated by the Waring problem, has been studied over both associative and non-associative algebras (see, for example~\cite{KMR16,  GD22, BS23, GD23, KS25, PSS25, PSS26}). A polynomial $f(x_1,\ldots, x_k)\in F<x_1,\ldots, x_k>$ in $k$-variables (non-commuting) defines a polynomial map on a given associative $F$-algebra $\mathcal A$ by evaluation. Surjectivity of such maps and when it is not, whether a finite sum of images (analogous to the Waring problem) gives the whole algebra, are some of the main questions in the subject. One of the most notable conjectures here is the L'vov-Kaplansky conjecture, which asks for the image of a multilinear polynomial on a matrix algebra over an infinite field. In this article, we focus on similar questions over Lie algebras.  

Let $L$ be a Lie algebra over a field $F$ and $\mathcal{F}_k$ be a free Lie algebra of rank $k$ over $F$ generated by $x_1, x_2, \ldots, x_k$. An element $w \in \mathcal{F}_k$ is called a Lie polynomial. A Lie polynomial $w \in \mathcal{F}_k$, induces a {\bf Lie polynomial map} $\tilde{w} \colon L^k \rightarrow L$, defined by sending a $k$-tuple $(v_1, v_2, \ldots, v_k)$ to $w(v_1, v_2, \ldots, v_k)$. As customary in the subject, we use the same notation $w$ for the Lie polynomial and the induced Lie polynomial map instead of $\tilde{w}$. The image of the Lie polynomial map induced by a word $w$ is denoted by $w(L)$. A simple example would be $w=[\ ,\ ]$, the Lie bracket on $L$, and the question would be if this induces a surjective map. This has been studied by several authors (see, for example,~\cite{Br63, Ak15, Kr23}) and doesn't seem to be fully solved over an arbitrary field. Questions about more general Lie polynomials have been studied in the literature too; see, for example~\cite{BGKP12, AEV15, KMR17, NR23, CF24}, but seem quite challenging. We are interested in studying what kind of subset of a Lie algebra $L$ could be a possible image of a Lie polynomial map, which we will make more precise below. 

It is quite easy to see that the image of a word map on a group is closed under automorphisms and contains the identity. In a brilliant work~\cite{Lu14}, Lubotzky classified that any subset $A$ of a non-Abelian finite simple group $G$ which is closed under automorphism, i.e., $\phi(A)\subset A, \forall \phi\in \Aut(G)$, and contains the identity is an image of a word in two variables. In~\cite{Ch90}, Chuang classified images of a polynomial map without constants on matrix algebra over $\mathbb F_q$ and showed that these are exactly the ones that contain $0$ and are closed under conjugation by invertible matrices (in view of Skolem-Noether theorem, these are all automorphisms). We wish to consider the analogous question here for finite simple Lie algebras. 

Let $L$ be a simple Lie algebra over $\mathbb F_q$ of Chevalley type (also called Chevalley Lie algebra or Chevalley algebra). We refer~\cite{Ca89, Se67} for the general theory. We assume the characteristic of the field $\mathbb F_q$ is a ``very good" prime. Note that the Lie algebra may have a center (e.g. $\Sl_n(q)$ for certain cases) over $\mathbb F_q$, hence will be simple modulo its center. Let $\Aut(L)$ denote the set of Lie algebra automorphisms for the Lie algebra $L$. Then, a subset $A\subset L$ has the following properties:
\begin{enumerate}
\item $0 \in A$.
\item $\phi(A) = A$, for all $\phi \in \Aut(L)$.
\end{enumerate}
We ask the following questions:
\begin{question}\label{question1}
Let $A\subset L$ satisfying the two above properties. Does there exist a Lie polynomial $w$ in finitely many variables such that $w(L)=A$? 
\end{question}

\begin{question}\label{question2}
If the answer to the Question~\ref{question1} is positive, can we find some such Lie polynomials that give the set $A$ as its image?
\end{question}

In this article, we answer the first question affirmatively for Chevalley Lie algebras over finite fields of ``very good'' characteristic. 
Our main theorem is following:  
\begin{theorem}
Let $L$ be a simple Chevalley algebra over a finite field $F$ of ``very good'' characteristic. Let $k(L)$ denote the minimum number such that for each non-zero element $a \in L$, there exists elements $a_2, a_3, \dots, a_{k(L)}$ in $L$ such that the set $\{a_1=a, a_2, a_3, \dots, a_{k(L)}\}$ generates $L$. Then, every subset $A\subset L$ containing $0$ and closed under automorphisms of $L$ is an image of some Lie polynomial from $\mathcal F_{k(L)}$.
\end{theorem}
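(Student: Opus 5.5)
Following Lubotzky's argument for finite simple groups, I would construct $w$ from a single ``universal'' generating tuple and then compose with a Lie polynomial that separates automorphism orbits. Set $k=k(L)$ and view $\mathcal F_k$ with generators $x_1,\dots,x_k$. The key idea is to build, as in \cite{Lu14}, a Lie polynomial $w_0\in\mathcal F_k$ with the following behaviour: $w_0(v_1,\dots,v_k)=0$ unless $(v_1,\dots,v_k)$ generates $L$, and on generating tuples $w_0$ takes a prescribed value. I would then post-compose with suitable Lie polynomials.

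\textbf{Step 1: a ``detector'' polynomial for generating tuples.} Since $L$ is finite, $\mathcal F_k\to L^{L^k}$, $w\mapsto (w(\underline v))_{\underline v}$, has image the relatively free algebra $\mathcal F_k/\mathrm{Id}(L)$, and this is a subalgebra of the finite product $\prod_{\underline v\in L^k}L$. Let $T\subset L^k$ be the set of generating $k$-tuples; $\Aut(L)$ acts freely on $T$, since an automorphism fixing a generating set is the identity. Choose orbit representatives $\underline t^{(1)},\dots,\underline t^{(m)}$. I claim that the image of $\mathcal F_k$ projected to $\prod_{\underline v\in T}L$ is exactly the set of $\Aut(L)$-equivariant functions, which is $\cong L^m$ via the representatives. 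This is the Goursat/Hall-type lemma. The image contains the diagonal generators, and its projection to each single factor is onto (a generating tuple). Moreover, for two tuples in different orbits, the projection to $L\times L$ is not the graph of an automorphism. Since $L$ is simple (and, in very good characteristic, perfect with $[L,L]=L$ and no nontrivial ideals), a subalgebra of $L\times L$ surjecting on both factors is either everything or a graph of an automorphism. The standard induction then gives all of $L^m$. Here I would need to deal with the possible centre noted in the paper; I would assume that ``simple'' means genuinely simple, or else pass to $L/Z$ with care.

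\textbf{Step 2: killing non-generating tuples.} I need $w_0$ vanishing on every non-generating tuple while being nonzero at chosen generating ones. A non-generating tuple $\underline v$ generates a proper subalgebra $M$. I would use a Lie polynomial lying in the ideal of identities of every proper subalgebra but not of $L$. For example, since $L$ is finite, there are finitely many proper subalgebras $M_i$. For each $i$, choose a polynomial $u_i$ vanishing identically on $M_i$ but not on $L$. I would then nest them, e.g.\ $u=u_1(\ldots u_2(\ldots))$ with fresh insertions of the generators, arranged so that $u$ is still not an identity of $L$. This step is the main obstacle. The fallback is to avoid the product trick altogether: by Step 1 the functions realisable on the \emph{whole} $L^k$ form a subalgebra $S\subseteq \prod_{\underline v\in L^k}L$, and I would argue directly that $S$ contains an element supported only on $T$. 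For the latter, I would show that $S$ is an ideal-closed subalgebra via the factors $\langle\underline v\rangle$ and use that proper subalgebras generate proper ``coordinates''.

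\textbf{Step 3: hitting $A$.} Given $A$ containing $0$ and $\Aut$-stable, write $A\setminus\{0\}=\bigcup_{j}\Aut(L)a_j$. Here the definition of $k(L)$ is used: for each $a_j$ there is a generating tuple with first entry $a_j$. Pick representatives $\underline t^{(1)},\dots,\underline t^{(m)}$, listing among them these tuples $(a_j,\dots)$. By Steps 1--2, choose $w$ zero on non-generating tuples and with $w(\underline t^{(i)})=a_j$ if $\underline t^{(i)}$ is the chosen tuple for $a_j$, and $0$ otherwise. The tuple beginning with $a_j$ guarantees that the value $a_j$ is achievable, e.g.\ as the projection of $x_1$ composed with an element equal to the identity on that coordinate. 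By equivariance, $w(L)=\{0\}\cup\bigcup_j\Aut(L)a_j=A$; the value $0$ is attained at the zero tuple.
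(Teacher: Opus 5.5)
Your Steps~1 and~3 are sound and agree with the paper. Step~1 is Lemma~\ref{lemma:E'Lr}: the image of $\mathcal F_k$, projected onto the generating-tuple coordinates, is the algebra of $\Aut(L)$-equivariant functions on $T$, which is $\cong L^m$. The function you prescribe in Step~3 is equivariant, just like the paper's $\bar z$.

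\textbf{The gap is Step~2.} You flag it yourself, and it is where the real content of the proof lies; neither route you sketch closes it. What Step~3 actually needs is that \emph{every} equivariant function on $T$, extended by $0$ to the non-generating tuples, lies in the image $S$ of $\mathcal F_k$. To state this precisely, let $\pi_E,\pi_D$ be the projections of $S$ onto the generating and non-generating coordinates, let $E'=\pi_E(S)$ and $D'=\pi_D(S)$, and let $K_2=\ker\pi_D$. The requirement is then $\pi_E(K_2)=E'\cong L^m$, which is the paper's $H\cong E'\times D'$.
\begin{itemize}
\item One element supported on $T$ does not suffice. It only shows that $\pi_E(K_2)$ is a nonzero ideal $L^J$ of $L^m$, so it reaches only the orbits in $J$.
\item Your nested identities $u_M$ of the proper subalgebras can be arranged to vanish off $T$. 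But nothing in the construction prevents the composite from vanishing on a given orbit of generating tuples, or from being an identity of $L$ altogether.
\item Even the existence of each $u_M$ (an identity of $M$ that is not an identity of $L$) is most easily seen by the argument below.
\end{itemize}

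\textbf{The missing idea} is that $L$ cannot be a quotient of a subalgebra of a finite product of proper subalgebras of $L$.

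For a non-generating tuple $\underline v$, the $\underline v$-coordinate of $S$ is $\langle\underline v\rangle\subsetneq L$. Hence $S/K_2\cong D'\subseteq\prod_{\underline v\notin T}\langle\underline v\rangle$.

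Write $\pi_E(K_2)=L^J$ and suppose some orbit $i\notin J$. Evaluation at the representative $\underline t^{(i)}$ is a surjection $S\to L$. It kills $K_2$, so it induces a surjection $\theta\colon D'\to L$.

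Enumerate the non-generating tuples $\underline v_1,\dots,\underline v_N$. Let $D'_j$ be the ideal of elements of $D'$ vanishing at $\underline v_1,\dots,\underline v_j$. Each $\theta(D'_j)$ is an ideal of the simple algebra $L$, hence equals $0$ or $L$. Take the least $j$ with $\theta(D'_j)=0$; then $\theta(D'_{j-1})=L$.
\begin{itemize}
\item So $\theta$ induces a surjection $D'_{j-1}/D'_j\to L$.
\item But $D'_{j-1}/D'_j$ embeds into $\langle\underline v_j\rangle$ via the $\underline v_j$-coordinate.
\item This is impossible, since $\dim\langle\underline v_j\rangle<\dim L$.
\end{itemize}
Therefore $J$ is everything and $E'\times 0\subseteq S$, which is exactly what your Step~3 uses.

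The paper makes the same point through composition factors (Lemmas~\ref{lemma:Jordan-Holder} and~\ref{lemma:HE'D'}). Let $K_1=\ker\pi_E$. Then $S/K_1\cong L^m$, while neither $K_1$ nor $D'$ has a factor isomorphic to $L$. So all $m$ copies of $L$ must lie in $K_2$, and $K_2$ embeds in $L^m$ as an ideal via $\pi_E$.
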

\noindent The number $k(L)$ is a finite number because of dimension reasons, although an absolute constant is desirable. For finite simple groups, it is $2$ what is called one-and-a-half generation, but it doesn't seem to be known for all finite simple Lie algebras (over an algebraically closed field, it is known to be $2$). For a brief discussion on this, see Subsection~\ref{subsectionkL}. The main Theorem is proved in Section~\ref{Lubotzky'stheorem}, Theorem~\ref{Thm: Image canditates are images}. We collect several basic examples related to the Theorem in Section~\ref{examples}.

The second question is more challenging. In~\cite{BGKP12}, Bandman et. al. showed dominance of Lie maps on Chevalley Lie algebras. In~\cite{KMR17}, Kanel-belov et. al. showed that over an algebraically closed field of characteristic $\neq 2$, the possible images of Lie polynomials on $\Sl_2$ are the whole set $\Sl_2$, $\{0\}$, or the subset consisting of all non-nilpotent elements. Contrary to this, in Section~\ref{image1}, \ref{image2}, \ref{image3}, \ref{image4}, we take the Lie algebra $\Sl_2(q)$ and find Lie polynomials, often a combination of Engel polynomials, and show that each of the automorphism orbits together with $0$ can be obtained as an image. Thus, a single conjugacy class of a semisimple element together with zero is an image of some specific Lie polynomial. This is summarised in Theorem~\ref{Thm: Semisimple orbits for each integer}. We hope to continue this work in the future to solve the Problem~\ref{question2}.

\subsection{Acknowledgment}
The second-named author would like to thank Prof. Boris Kunyavskii and Prof. Pralay Chatterjee for several helpful discussions during their visit to the Lodha Mathematical Science Institute, Mumbai.

\section{Preliminaries}

Let $\mathfrak g$ be a finite-dimensional simple Lie algebra over $\mathbb {C} $. We know that for such an algebra, there exists a Chevalley basis that defines the Lie algebra over $\mathbb Z$, denoted as $\mathfrak g_{\mathbb Z}$. Let $F$ be a field. One defines a Lie algebra $\mathfrak g_F = \mathfrak g_{\mathbb Z}\otimes F$, called a Chevalley algebra. For a detailed account, we refer the reader to the excellent books~\cite{Ca89, Se67}. In this article, we fix the base field $F=\mathbb F_q$ of ``very good" characteristic and consider only these Lie algebras (which in the subject of Modular Lie algebras are referred to as classical, but we refer to these as Chevalley algebras). These Lie algebras $\mathfrak g_{\mathbb F_q}$ may not be simple, but are simple modulo their center. Thanks to Steinberg's work (see~\cite{St61}, and also see Chapter III~\cite{Se67}), the automorphisms of these Lie algebras are well known. These are usually the adjoint action of the corresponding Chevalley group together with diagonal and graph automorphisms (see Theorem III.5.1~\cite{Se67}).

\subsection{Lie maps}
Let $X=\{x_1, \ldots, x_k\}$ by a set and $F$ be a field. A free Lie algebra $\mathcal F_k$ on the set $X$ with $\iota \colon X \rightarrow \mathcal F_k$ is a Lie algebra which has the following universal property: For any Lie algebra $\mathfrak g$ over $F$ and a map $\chi \colon X \rightarrow \mathfrak g$ there exists a unique Lie algebra homomorphism $\tilde\chi \colon \mathcal F_k \rightarrow \mathfrak g$ such that $\chi= \tilde\chi\circ \iota$. Formally, the elements of $\mathcal F_k$ can be thought of as a sum of finitely many Lie monomials, which are in turn, finitely many steps of brackets of brackets on the elements $x_i$. For example, $e_1(x_i, x_j)=[x_i, x_j]$ and the Engel words.

An element $w\in \mathcal F_k$ over the field $F$ defines a Lie map on a Lie algebra $L$ over $F$, simply given by evaluation. Let $w(L) \subset L$ be the image of $L$ under $w$. Clearly, by taking all variables $0$, we get $0\in w(L)$. Further, for any $\psi\in \Aut(L)$, we know that $\psi(w(x_1, \ldots, x_k)) = w(\psi(x_1), \ldots, \psi(x_k))$ and hence $\psi(w(L))\subset w(L)$. Our Question~\ref{question1} asks if these properties are enough to classify the possible subset of $L$ that are the image of some $w\in \mathcal F_k$.   

When $L$ is an Abelian Lie algebra, we can answer this easily. 
\begin{proposition}
Let $L$ be an Abelian Lie algebra over a field $F$. Then, every subset $A$ of $L$ containing $0$ and closed under automorphisms is an image of some Lie polynomial.
\end{proposition}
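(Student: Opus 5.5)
In an Abelian Lie algebra every bracket vanishes, so a Lie polynomial $w\in\mathcal F_k$ acts on $L$ only through its degree-one part. I would first write $w=\sum_{i=1}^k c_i x_i + w'$, where $w'$ lies in $[\mathcal F_k,\mathcal F_k]$, and note that $w'(v_1,\ldots,v_k)=0$ for all $v_i\in L$. Hence $w(L)=\{\sum_i c_iv_i : v_i\in L\}$, which is $\{0\}$ if all $c_i=0$ and $L$ otherwise, since one can pick $j$ with $c_j\neq 0$, set $v_j=c_j^{-1}u$ and all other $v_i=0$. So the only possible images are $\{0\}$ (for instance $w=[x_1,x_2]$, or $w=0$) and $L$ (for instance $w=x_1$).

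The remaining step is to show that these are the only subsets satisfying the hypotheses. For an Abelian $L$, $\Aut(L)=\mathrm{GL}(L)$, because every invertible linear map preserves the zero bracket. If $A$ contains $0$ and is closed under $\mathrm{GL}(L)$, and $A$ has a nonzero element $a$, then for any nonzero $b\in L$ there is $g\in \mathrm{GL}(L)$ with $g(a)=b$: extend $\{a\}$ and $\{b\}$ to bases and send one basis to the other (in infinite dimension this uses a Hamel basis). So $A=L$; otherwise $A=\{0\}$. Both cases are realised above, which proves the proposition.

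I do not expect any real obstacle. The only points to state carefully are that $\mathrm{GL}(L)$ acts transitively on $L\setminus\{0\}$, which holds for any dimension and any field, and that the degree-one part of $w$ is a well-defined linear combination of the generators. The latter follows from the grading of $\mathcal F_k$.
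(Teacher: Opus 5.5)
Your proof is correct and matches the paper's argument: $\Aut(L)=\mathrm{GL}(L)$ acts transitively on $L\setminus\{0\}$, so $A$ is either $\{0\}$ or $L$, and these are the images of $[x_1,x_2]$ and $x_1$. Your first paragraph, which shows that every $w(L)$ is $\{0\}$ or $L$, is a correct extra that the proof does not need.
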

\begin{proof}
Since $L$ is Abelian, every vector space automorphism of $L$ is also a Lie algebra automorphism. Thus, for any $v, w \in L \backslash \{0\}$, there exists $\alpha \in \Aut(L)$ such that $\alpha(v) = w$. Hence, the only possible subset of $L$ which could be an image is $\{0\}$ or $L$. These are images for the Lie polynomials $[x_1, x_2]$ and $x_1$, respectively. 
\end{proof}

\subsection{Semisimple Lie algebras}

A semisimple Lie algebra is a direct sum of simple Lie algebras or a Lie algebra that has no maximal solvable ideal. 
We state the Jordan--H\"{o}lder theorem for Lie algebras, which is required later in our work. 
\begin{theorem}[Jordan--H\"older Theorem]
Let $L$ be a Lie algebra over a field $F$. Suppose $L$ admits a finite composition series
$$
0 = L_0 \subset L_1 \subset \cdots \subset L_n = L,
$$
where each $L_i$ is an ideal of $L$ and each quotient $L_i/L_{i-1}$ is a simple Lie algebra. Then
\begin{enumerate}
\item Any two composition series of $L$ have the same length $n$.
\item The set of isomorphism classes of the simple factors $L_i/L_{i-1}$ is independent of the chosen composition series, up to permutation.
\end{enumerate}
\end{theorem}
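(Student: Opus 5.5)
Both assertions follow the standard argument for groups (Schreier refinement, then Zassenhaus), adapted to ideals of a Lie algebra. Throughout, ideals and quotients are Lie algebras and the isomorphism theorems hold for them: for ideals $I,J$ of $L$, the sum $I+J$ and the intersection $I\cap J$ are ideals, and $(I+J)/J\cong I/(I\cap J)$. I will argue by induction on the minimal length $n$ of a composition series of $L$, and prove (1) and (2) together in the form ``any two composition series have the same length and the same multiset of factors up to isomorphism.''

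If $n=0$ then $L=0$, and every composition series is the trivial one, so there is nothing to prove. Suppose $n\ge 1$, and let
$$0=M_0\subset M_1\subset\cdots\subset M_m=L$$
be a second composition series. Write $I=L_{n-1}$ and $J=M_{m-1}$. Both are ideals of $L$ with simple quotients $L/I$ and $L/J$.

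If $I=J$, then $0=L_0\subset\cdots\subset L_{n-1}=I$ and $0=M_0\subset\cdots\subset M_{m-1}=I$ are composition series of $I$, since ideals of $L$ are ideals of $I$ and the quotients are unchanged. Induction applies to $I$, because it has a composition series of length $n-1$. It gives $n-1=m-1$ and matching factors, and adding the common top factor $L/I$ finishes this case.

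If $I\neq J$, then $I+J$ is an ideal of $L$ properly containing $I$ (or $J$). Simplicity of $L/I$ then forces $I+J=L$. Put $K=I\cap J$. The second isomorphism theorem gives
$$L/I=(I+J)/I\cong J/K,\qquad L/J\cong I/K,$$
so $J/K$ and $I/K$ are simple. Choose a composition series of $K$, say of length $r$.

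This is where I expect the real obstacle. The hypothesis only says each $L_i$ is an ideal of $L$, whereas a composition series of $K$ needs its terms to be ideals of $L$ contained in $K$. My plan is to take a composition series of $L$ refining $0\subset K\subset I\subset L$. Such a series exists because $L$ has one (from the hypothesis), and a Schreier-type refinement, intersecting the given series with $K$ and with $I$, produces a chain of ideals of $L$ through $K$. Repeated terms are deleted, and simplicity is checked using the isomorphism theorems. I would prove this refinement statement first as a lemma.

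With the lemma in hand, $I$ has two composition series. One is $L_0\subset\cdots\subset L_{n-1}$, of length $n-1$. The other is the series of $K$ followed by $K\subset I$, of length $r+1$. By induction applied to $I$, we get $n-1=r+1$, and the factors agree: those of $L_0\subset\cdots\subset L_{n-1}$ are the factors of $K$ together with $I/K\cong L/J$. In particular $J$ has the composition series given by $K$'s series followed by $J$, of length $r+1=n-1$. Induction now applies to $J$ as well, so $m-1=r+1$ and the factors of $M_0\subset\cdots\subset M_{m-1}$ are those of $K$ together with $J/K\cong L/I$. Therefore both full series of $L$ have length $r+2$, and both have factor multiset equal to the factors of $K$ together with $\{L/I,\,L/J\}$. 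This proves (1) and (2).
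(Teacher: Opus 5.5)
The paper gives no proof of this theorem; it quotes it as a standard fact, so there is no argument of the authors' to compare against. Your proof is correct and complete.

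Note that your opening sentence promises the Schreier--Zassenhaus route, but what you actually carry out is the classical induction through the two top terms $I=L_{n-1}$ and $J=M_{m-1}$. That route needs only the isomorphism theorems and one refinement lemma, and never uses Zassenhaus.

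You identified the one point where the paper's hybrid definition (terms are ideals of $L$, factors simple as Lie algebras) genuinely matters. The series through $K=I\cap J$ must consist of subspaces that are ideals of both $I$ and $J$, so that it extends to composition series of $I$ and of $J$. Ideals of $K$ alone would not suffice; ideals of $L$ do.

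The lemma is a one-liner, and you do not need the intersection with $I$. The subspaces $K\cap L_i$ are ideals of $L$, and
\[
(K\cap L_i)/(K\cap L_{i-1}) \cong \bigl((K\cap L_i)+L_{i-1}\bigr)/L_{i-1}.
\]
The numerator on the right is an ideal of $L$ lying between $L_{i-1}$ and $L_i$, so the right-hand side is an ideal of the simple algebra $L_i/L_{i-1}$. Hence it is either $0$ or isomorphic to $L_i/L_{i-1}$, and deleting repetitions gives the required series of $K$.

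Two small points should be made explicit when you write it up:
\begin{enumerate}
\item In the case $I\neq J$ with $J\subseteq I$, you get $I+J=L$ from the simplicity of $L/J$ rather than of $L/I$.
\item Your induction shows that every series is equivalent to one of minimal length $n$. The equivalence of two arbitrary series then follows by transitivity.
\end{enumerate}
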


\begin{lemma}\label{Lemma: ideals for a subalgebra of L_1 X L_2}
Let $L_1$ and $L_2$ be Lie algebras and $H \subset L_1 \oplus L_2$ be a subalgebra such that $\pi_1(H) = L_1$ and $\pi_2(H) = L_2$. Then, there exist ideals $I \trianglelefteq L_1, J \trianglelefteq L_2$ and an isomorphism $\psi: L_1/I \to L_2/J$ such that $H = \{(x, y) \in L_1 \oplus L_2 \mid \psi(x + I) = y + J \}$.  
\end{lemma}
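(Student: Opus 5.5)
This is the Lie-algebra analogue of Goursat's lemma, and I would prove it the same way. Set
\[
I = \{x \in L_1 \mid (x,0) \in H\}, \qquad J = \{y \in L_2 \mid (0,y) \in H\}.
\]
Both are subspaces, since $H$ is a subspace.

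\emph{Step 1: $I$ and $J$ are ideals.} Let $x \in I$ and $a \in L_1$. Because $\pi_1(H)=L_1$, there is some $b \in L_2$ with $(a,b)\in H$. Then
\[
[(a,b),(x,0)] = ([a,x],0) \in H,
\]
so $[a,x] \in I$. The argument for $J$ is symmetric and uses $\pi_2(H)=L_2$. Surjectivity of the projections is used exactly here.

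\emph{Step 2: define $\psi$.} For $x \in L_1$, choose $y$ with $(x,y)\in H$ and put $\psi(x+I) = y+J$. This is well defined. If $(x,y)$ and $(x',y')$ lie in $H$ with $x-x' \in I$, then $(x-x',0)\in H$. Hence
\[
(0,\,y-y') = (x-x',\,y-y') - (x-x',\,0) \in H,
\]
so $y-y' \in J$.

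The map $\psi$ is linear and bracket-preserving, because $H$ is closed under linear combinations and brackets. It is surjective because $\pi_2(H)=L_2$. It is injective by the symmetric argument: if $(x,y)\in H$ with $y\in J$, then $(x,0) = (x,y)-(0,y) \in H$, so $x\in I$. Thus $\psi$ is an isomorphism $L_1/I \to L_2/J$.

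\emph{Step 3: describe $H$.} By construction, every $(x,y)\in H$ satisfies $\psi(x+I)=y+J$. Conversely, suppose $\psi(x+I) = y+J$. Pick $(x,y_0)\in H$. Then $y-y_0 \in J$, so $(0,y-y_0)\in H$, and therefore
\[
(x,y) = (x,y_0) + (0,\,y-y_0) \in H.
\]

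None of these steps is a real obstacle. The only points needing care are the well-definedness check and the use of surjectivity of the projections to get the ideal property.
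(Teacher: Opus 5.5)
Your proof is correct and follows essentially the same Goursat-style argument as the paper: the same $I$ and $J$, the same use of surjectivity of the projections to get the ideal property, and the same definition and verification of $\psi$. Your version is slightly more complete than the paper's, since you check that $\psi$ is independent of the coset representative (the paper only treats a fixed $x$) and you explicitly verify the reverse inclusion in Step 3.
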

\begin{proof}
Define $I = \{ x \in L_1 \mid (x, 0) \in H \}$ and $J = \{ y \in L_2 \mid (0, y) \in H \}$. Since $H$ is a subalgebra of $L_1 \oplus L_2$, $I$ is a subalgebra of $L_1$, and $J$ is a subalgebra of $L_2$. To show $I \trianglelefteq L_1$, let $i \in I$ and $a \in L_1$. Since $\pi_1$ and $\pi_2$ are surjective, $\exists\ b \in L_2$ such that $(a, b) \in H$. Then $[(a, b), (i, 0)] = ([a, i], 0) \in H$, so $[a, i] \in I$. Thus $I$ is an ideal. Similarly, $J \trianglelefteq L_2$.

Define $\psi\colon L_1/I \rightarrow L_2/J$ by $\psi(x + I) = y + J$ whenever $(x, y) \in H$. 
If $(x, y_1), (x, y_2) \in H$, then $(0, y_1 - y_2) \in H$, so $y_1 - y_2 \in J$, making $\psi$ well-defined. Further, $\psi$ is a Lie homomorphism because $H$ is a subalgebra. Also, $\psi$ is surjective because $\pi_1$ and $\pi_2$ are surjective. If $\psi(x + I) = J$, then $(x, y) \in H$ for some $y \in J$. Since $(0, y) \in H$, then $(x, 0) \in H$, so $x \in I$. Thus $\psi$ is injective. Hence, $\psi$ is an isomorphism and $H = \{ (x, y) \in L_1 \oplus L_2 \mid \psi(x + I) = y + J \}$.
\end{proof}

\subsection{Few generators for simple Lie algebras}\label{subsectionkL}
For our work, we need a bound on the number of generators for simple Lie algebras. For non-Abelian finite simple groups, it has been proved by Steinberg~\cite{St62} that each such group is generated by $2$ elements. In fact, a stronger result is known due to Guralnick and Kantor~\cite{GK00}, called the one-and-a-half generation. That is, for any fixed non-trivial element $x$, there exists another element $y$ such that $x, y$ generate the group. Similar questions have been studied for simple Lie algebras. 

Let $F$ be a field. When $F$ is characteristic $0$, Kuranashi~\cite{Ku51} proved that simple Lie algebras are generated by $2$ elements. Ionescu~\cite{Io76} proved that simple Lie algebras over $\mathbb {C} $ are one-and-a-half generated. When $F$ is an algebraically closed field of characteristic $0$ or $p\geq 5$, it has been proved by Bois~\cite{Bo09} that any Chevalley algebra is $2$ generated. Further, it has been proved that these are one-and-a-half generated. 

For any element $x\in L$, a simple Lie algebra over a finite field $F$, does there exist a generating set $x_1=x, x_2, \ldots, x_{k}$? If it exists, the smallest such $k$ is denoted as $k(L)$. Clearly, a vector space basis of $L$ would work as $k$, hence $k(L) \leq \dim(L)$. In fact, because of root space decomposition for a simple Lie algebra, a better bound could be taken as the number of roots. For $\Sl_n(q)$, it has been proved in~\cite{CJZ25} that this Lie algebra is $2$-generated. However, a clear answer to $k(L)$ seems not to be known.  

\section{Analog of Lubotzky's Theorem for finite simple Lie algebras}\label{Lubotzky'stheorem}

We recall that we consider $L$ a simple Chevalley Lie algebra over a field $F$ of ``very good" characteristic. For $L$ over $F$, let $k(L)$ denote the minimum number such that for each non-zero element $a \in L$, there exists elements $a_2, a_3, \dots, a_{k(L)}$ in $L$ such that the set $\{a_1=a, a_2, a_3, \dots, a_{k(L)}\}$ generates $L$. Since $k(L) \leq \dim(L)$, it is a finite number. For a brief discussion on this, see Section~\ref{subsectionkL}. Similar to the case of finite simple groups as in~\cite{Lu14}, this number would turn out to be the number of variables required to get the required subset as an image. 

Now, we are ready to prove our main theorem. 
\begin{theorem}\label{Thm: Image canditates are images}
Let $L$ be a simple Chevalley algebra over a finite field $F$ of ``very good'' characteristic. Then, every subset $A\subset L$ containing $0$ and closed under automorphisms of $L$ is an image of some Lie polynomial from $\mathcal F_{k(L)}$.
\end{theorem}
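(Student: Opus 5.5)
We adapt Lubotzky's argument from~\cite{Lu14}, working in the free Lie algebra $\mathcal F_k$ with $k=k(L)$. Since $F$ and $L$ are finite, $L^k$ is finite, and we consider the set $\Omega$ of all $k$-tuples in $L^k$ that generate $L$. The group $\Aut(L)$ acts on $\Omega$ coordinatewise, and this action is free: an automorphism fixing a generating tuple fixes all of $L$. The key object is the ``universal'' homomorphism
\[
\Phi\colon \mathcal F_k \longrightarrow L^{\Omega},\qquad w\mapsto \bigl(w(\omega)\bigr)_{\omega\in\Omega},
\]
whose image $H=\Phi(\mathcal F_k)$ is the subalgebra of $L^{\Omega}$ generated by the $k$ elements $X_i=(\omega_i)_{\omega\in\Omega}$. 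Any $w$ is determined, as far as values on generating tuples go, by $\Phi(w)$. We aim to show that $H$ contains every tuple $(a_\omega)$ with $a_{\phi(\omega)}=\phi(a_\omega)$ for all $\phi\in\Aut(L)$. In other words, on each $\Aut(L)$-orbit $\mathcal O_j$ of $\Omega$ we may pick one representative $\omega_j$ and prescribe $w(\omega_j)$ arbitrarily.

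To prove this we use Goursat-type arguments. Let $\omega_1,\dots,\omega_m$ be orbit representatives. The projection of $H$ to $L^{\{\omega_1,\dots,\omega_m\}}=L^m$ is the subalgebra $H'$ generated by the tuples $(\omega_{1,i},\dots,\omega_{m,i})$, and its projection to each factor is onto, since each $\omega_j$ generates $L$. We show $H'=L^m$ by induction on $m$, using Lemma~\ref{Lemma: ideals for a subalgebra of L_1 X L_2} with $L_1=L^{m-1}$ (by induction) and $L_2=L$. Because $L$ is simple (we assume the center is trivial, or pass to the simple quotient as the statement intends), we have $J=0$ or $J=L$. If $J=L$, then $H'=L^m$. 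If $J=0$, then $L_1/I\cong L$, so $I$ is a maximal ideal of $L^{m-1}$ with simple quotient. By Jordan--Hölder the ideals of $L^{m-1}$ are sub-sums of factors, so $I$ is the kernel of the projection to some factor $j$ composed with an automorphism. This gives $\omega_m=\psi(\omega_j)$ for some $\psi\in\Aut(L)$, contradicting the choice of distinct orbit representatives. The main obstacle is this step: justifying that every ideal of $L^{m-1}$ with quotient $\cong L$ is the kernel of a coordinate projection. For perfect centerless $L$ this holds, since any ideal $I$ satisfies $I\supseteq [L_i,I]$, and $[L_i,I]$ is either $0$ or $L_i$; if the center is nontrivial we must work modulo it carefully.

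Given this, we construct $w$ for a subset $A$ with $0\in A$ and $\Aut(L)$-stable. For each nonzero $a\in L$, the definition of $k(L)$ gives a generating tuple $\omega(a)=(a,a_2,\dots,a_k)\in\Omega$. Enumerate the $\Aut(L)$-orbits in $\Omega$ and choose the target $\alpha\in L^\Omega$ as follows. On an orbit containing some $\omega(a)$ with $a\in A\setminus\{0\}$, choose the representative $\omega(a)$ and set its value to $a$. On every other orbit, set the value $0$. Then extend equivariantly, which is consistent because the action is free. Since $A$ is $\Aut(L)$-stable, every value of $\alpha$ lies in $A$. By the previous paragraph $\alpha=\Phi(w)$ for some $w\in\mathcal F_k$.

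It remains to handle non-generating tuples. These generate proper subalgebras, on which $w$ may take values outside $A$. To fix this, we follow Lubotzky and replace $w$ by a product-type expression that vanishes on non-generating tuples. We choose $u\in\mathcal F_k$ that vanishes on every tuple generating a proper subalgebra but acts as a ``projection'' on generating tuples. Concretely, we realize via $\Phi$-surjectivity, now applied to the larger family of all tuples, a word equal to $w$ on $\Omega$ and $0$ off $\Omega$. For this we redo the Goursat argument on $\prod_{\text{all tuples}}\langle\omega\rangle$. Proper subalgebras $S$ generated by a tuple have no quotient isomorphic to $L$ for dimension reasons, so by the lemma the prescribed value $0$ off $\Omega$ is independent of the $\Omega$-coordinates. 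Then $w(L)=\{0\}\cup(A\setminus\{0\})=A$.
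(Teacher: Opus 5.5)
Your proposal is correct and is essentially the paper's own adaptation of Lubotzky's argument: the universal map into the product of copies of $L$ over all $k$-tuples, a Goursat induction showing that the generating coordinates carry the full $\Aut(L)$-twisted diagonal $L^r$, splitting off the non-generating coordinates because proper subalgebras cannot contribute a copy of $L$, and an equivariant target built orbitwise from first coordinates lying in $A\setminus\{0\}$. The differences are minor: your description of the ideals of $L^{m-1}$ is a cleaner version of the paper's step showing $E'\cong L^r$, and your coordinate-by-coordinate Goursat argument replaces the paper's Jordan--H\"older count for $H=E'\times D'$ (there the dimension bound is really applied to the image of $E'\times 0$ in $S/J$, which is a quotient of $L^r$ of dimension less than $\dim L$ and hence zero, not just to $S$ itself).
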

The proof will be in several steps. We will consider a non-Abelian Lie algebra. It closely follows the proof of a similar result by Lubotzky in~\cite{Lu14} for non-Abelian finite simple groups. We begin by setting up the machinery for the same.

Take $k=k(L)$ and consider the free Lie algebra $\mathcal{F}_k$ on the generating set $x_1, \ldots, x_k$. Let $\mathcal{I}$ be an indexing set having the same cardinality as the set $L^k$ (finiteness of $L$ is crucial here). Let 
$$\mathcal C= \{(g_{(i,1)}, g_{(i,2)}, \ldots, g_{(i,k)} ) \mid i \in \mathcal{I}\}$$
be the set of all ordered $k$-tuples of elements of $L$. We write the indexing set $\mathcal{I} = \mathcal{I}_1 \cup \mathcal{I}_2$ such that $i \in \mathcal{I}_1$ if and only if the tuple $(g_{(i,1)}, g_{(i,2)}, \dots, g_{(i,k)} )$ is a generating set of $L$. Note that for every $i \in \mathcal{I}$, the corresponding tuple $(g_{(i,1)}, g_{(i,2)}, \ldots, g_{(i,k)} )$ induces a unique Lie algebra homomorphism on the free Lie algebra $\phi_i \colon \mathcal{F}_k \to L$ defined by $\phi_i(x_j) = g_{(i,j)}$, for $1\leq j \leq k$. Clearly, when $i \in \mathcal{I}_1$, the map $\phi_i$ is surjective and hence $\mathcal{F}_k / N_i \cong L$. If $i \in \mathcal{I}_2$, then $\phi_i$ is not surjective and $\mathcal{F}_k / N_i$ is isomorphic to a proper Lie subalgebra of $L$. With this in mind let $N_i := \rm Ker(\phi_i)$, $M := \bigcap_{i \in \mathcal{I}} N_i$ and $N := \bigcap_{i \in \mathcal{I}_1} N_i$. 

Now, consider the diagonal map given by,
\begin{eqnarray*}
\phi_M = (\phi_i)_{i \in \mathcal{I}}&\colon & \mathcal{F}_k \rightarrow \displaystyle \prod_{i \in \mathcal{I}} L\\ 
&& x_j \mapsto \big(\phi_i(x_j)\big)_{i \in \mathcal{I}} = (g_{(i,j)})_{i \in \mathcal{I}} 
\end{eqnarray*}
and denote $H:= \phi_M (\mathcal{F}_k) \subset \prod_{i \in \mathcal{I}} L$. We write $\displaystyle \prod_{i \in \mathcal{I}} L = E \times D$, where $E =\displaystyle \prod_{i \in \mathcal{I}_1} L$ and $D = \displaystyle \prod_{i \in \mathcal{I}_2} L$. Consider the natural projection maps, $\pi_E\colon H \to E$ and $\pi_D \colon H \to D$. Let $K_1:= \rm Ker(\pi_E)$, $E':= \rm Image(\pi_E)$, $K_2:= \rm Ker(\pi_D)$, $D':= \rm Image(\pi_D)$. Now, we aim to understand the structure of these ideals and subalgebras. 

\begin{lemma}\label{lemma:Jordan-Holder}
With the notation as above, $K_1 \subset H$ has no Jordan--H\"older factor isomorphic to $L$.
\end{lemma}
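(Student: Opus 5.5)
The plan is to embed $K_1$ into $D$ and argue that every composition factor of $D' = \pi_D(H)$ has dimension strictly less than $\dim L$.

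\textbf{Step 1: $K_1$ embeds in $D'$.} Since $H \subset E \times D$, an element $h\in K_1$ has the form $(0,d)$. Hence $\pi_D$ restricted to $K_1$ is injective, and $K_1$ is isomorphic to the ideal $\pi_D(K_1)$ of $D'$. It is an ideal because $\pi_D$ maps $H$ onto $D'$.

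\textbf{Step 2: Jordan--H\"older factors of $K_1$ occur in $D'$.} A composition series of $K_1$ by ideals of $K_1$ need not consist of ideals of $H$, so I will not work with factors of $H$ directly. Instead I will use a dimension argument, which works for any series of subalgebras, each an ideal in the next, with simple quotients (this is the Jordan--H\"older setup adapted to subnormal series). First I will check that $K_1$ embeds in $D' \subset \prod_{i\in\mathcal I_2}\phi_i(\mathcal F_k)$. The image $\phi_i(\mathcal F_k)$ is a proper subalgebra $S_i$ of $L$ for $i\in\mathcal I_2$, since the tuple does not generate $L$. So $D'$ is a subalgebra of the finite product $P=\prod_{i\in\mathcal I_2} S_i$.

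\textbf{Step 3: no subquotient of $P$ is isomorphic to $L$.} Suppose $B\trianglelefteq A\le P$ with $A/B\cong L$. Induct on the number of factors of $P$. For the last coordinate projection $\pi\colon A\to S_j$, consider $A\cap\ker\pi$ and $\pi(A)$. The image of $A\cap \ker \pi$ in $A/B\cong L$ is an ideal of $L$. If $L$ is simple, that ideal is $0$ or $L$.
\begin{itemize}
\item If it is $L$, then $L$ is a subquotient of $A\cap\ker\pi$, which lies in a product with fewer factors, and induction applies.
\item If it is $0$, then $A\cap\ker\pi\subset B$, so $L$ is a quotient of $\pi(A)\le S_j$. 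This gives $\dim L\le \dim S_j<\dim L$, a contradiction.
\end{itemize}
Since $K_1$ embeds in $D'\le P$, any simple factor of $K_1$ isomorphic to $L$ would be a subquotient of $P$, which is impossible.

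The step I expect to be the main obstacle is the caveat about the center. The paper says $L$ may only be simple modulo its center, and the statement "factor isomorphic to $L$" then needs care. I would handle this by running the same dimension count: a subquotient isomorphic to $L$ or to $L/Z(L)$ cannot sit in a product of proper subalgebras. In the ideal case, the image of $A\cap \ker\pi$ in $L$ is $0$, $L$, or central. The central case still forces $\dim \pi(A)\ge \dim L-\dim Z(L)$. I would rule it out, for the relevant simple $L$, by noting that proper subalgebras are too small, or more simply by assuming $L$ simple as in the theorem's hypothesis.
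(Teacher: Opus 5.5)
Your proposal is correct and follows the same route as the paper. Both arguments embed $K_1$ into $D$ via $\pi_D$ and use that each coordinate projection lands in a proper subalgebra $\phi_i(\mathcal F_k)\subsetneq L$; the paper asserts the conclusion directly from this, while your induction on the number of factors (using simplicity of $L$ and $\dim S_j<\dim L$) supplies the step the paper leaves implicit. Your caveat about the center is unnecessary, since the theorem assumes $L$ is simple.
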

\begin{proof}
We have $K_1 = \{(\phi_i(w))_{i \in \mathcal{I}} \mid w \in \mathcal{F}_k \text{ and } \phi_i(w) = 0, \text{ for } i \in \mathcal{I}_1\}\subset H \subset E\times D$. Thus, $K_1$ is isomorphic to a Lie subalgebra of $D$. Note that the projection of $K_1$ to every single copy of $L$ in $D$ is a proper Lie subalgebra of $L$. Thus $K_1$ has no Jordan--H\"older factor isomorphic to $L$.
\end{proof}


Now, we aim to understand the structure of $E'$.
\begin{lemma}\label{lemma:EJ}
With the notation as above, $E':=\pi_E(H)$ is isomorphic to $\displaystyle \prod_{i \in \mathcal{J}} L$ for some indexing set $\mathcal{J} \subseteq \mathcal{I}_1$.
\end{lemma}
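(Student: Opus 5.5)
We prove the claim by induction on finite subsets of $\mathcal{I}_1$, using Lemma~\ref{Lemma: ideals for a subalgebra of L_1 X L_2} repeatedly, in the spirit of Goursat/Hall's lemma for groups. The key input is that $L$ is simple. In very good characteristic, and after passing to $L$ modulo its center if necessary, this means $L$ has no nontrivial proper ideals and $[L,L]=L$. We work in $E=\prod_{i\in\mathcal{I}_1}L$, and write $\pi_S$ for the projection onto $\prod_{i\in S}L$ for $S\subseteq \mathcal{I}_1$. Since $\mathcal{I}_1$ is finite (because $L$ is finite), it suffices to show the following.

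\emph{Claim:} for every $S\subseteq\mathcal{I}_1$ there is $\mathcal{J}_S\subseteq S$ such that $\pi_{\mathcal{J}_S}$ maps $\pi_S(H)$ isomorphically onto $\prod_{j\in\mathcal{J}_S}L$. Taking $S=\mathcal{I}_1$ then gives the lemma.

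For $|S|=1$ this holds because each $\phi_i$ with $i\in\mathcal{I}_1$ is surjective. For the inductive step, write $S=S'\cup\{s\}$ and let $L_1=\pi_{S'}(H)$, which by induction is isomorphic to $\prod_{j\in\mathcal{J}_{S'}}L$, and $L_2=L$. Then $\pi_S(H)\subseteq L_1\oplus L_2$ surjects onto both factors. Lemma~\ref{Lemma: ideals for a subalgebra of L_1 X L_2} gives ideals $I\trianglelefteq L_1$, $J\trianglelefteq L_2$ and an isomorphism $L_1/I\cong L_2/J$. Since $L_2=L$ is simple, $J$ is either $0$ or $L$.
\begin{itemize}
\item If $J=L$, then $L_1/I=0$, so $I=L_1$ and $\pi_S(H)=L_1\oplus L$. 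We set $\mathcal{J}_S=\mathcal{J}_{S'}\cup\{s\}$.
\item If $J=0$, then $\pi_S(H)$ is the graph of a surjection $L_1\to L$ with kernel $I$. The projection to $L_1$ is then an isomorphism, and we set $\mathcal{J}_S=\mathcal{J}_{S'}$.
\end{itemize}

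The main technical point is knowing that the ideals of $\prod_{j}L$ are exactly the subproducts. We do not actually need this for the induction above, since only ideals of the simple factor $L_2$ were used. For completeness: if $I$ is an ideal of $\prod_j L$ and $x\in I$ has a nonzero coordinate $x_j$, then bracketing with elements supported in coordinate $j$ yields $[L,x_j]$ in coordinate $j$ inside $I$. The ideal of $L$ generated by $x_j$ is $L$, so iterated brackets give all of $L$ in coordinate $j$, and hence $I$ is a subproduct.

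The step most likely to need care is the simplicity of $L$ itself. If $L$ has a nonzero center $Z$ (for example $\Sl_n(q)$ with $p\mid n$), then $J$ could be $Z$ and the dichotomy above fails. I would handle this by working throughout with $L/Z$, which is simple under the very good characteristic hypothesis. Alternatively, I would use the standing convention of the section that $L$ is a simple Chevalley algebra, so its only ideals are $0$ and $L$, and the argument goes through as written.
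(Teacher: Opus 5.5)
Your proof is correct, but it runs in the opposite direction from the paper's.

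\textbf{The paper's route (top-down).} For each $j\in\mathcal I_1$ it considers the map $f_j$ that kills the $j$-th coordinate of $E'$. It observes that $\ker f_j$ is a copy of $U_j=\phi_j(\bigcap_{i\neq j}N_i)$, which is an ideal of $L$ and hence $0$ or $L$. It then deletes coordinates with $U_j=0$, since these are determined by the others. Once every surviving coordinate has $U_j=L$, it realises an arbitrary tuple $(a_j)$ as $\pi_E\phi_M(\sum_j w_j)$, with each $w_j$ supported in a single coordinate.

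\textbf{Your route (bottom-up).} You add one coordinate at a time and apply Lemma~\ref{Lemma: ideals for a subalgebra of L_1 X L_2} at each step. The dichotomy $J\in\{0,L\}$ decides whether the new coordinate is free or a function of the earlier ones. This is essentially the Goursat-type induction the paper itself only uses later, in the proof of Lemma~\ref{lemma:E'Lr}.

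\textbf{What each approach buys.} Your version keeps the bookkeeping of $\mathcal J$ correct automatically. The paper defines $\mathcal J=\{j:\ker f_j\cong L\}$ once, for the full index set, and this set is not stable under deletion. In the actual situation every $\Aut(L)$-orbit in $\{\phi_i\}_{i\in\mathcal I_1}$ has more than one element. Any $j$ with an orbit-partner $j'$ has $N_{j'}=N_j$, hence $U_j=0$, so the literal $\mathcal J$ is empty. The pruning therefore has to be iterated, recomputing the kernels after each deletion, and your induction does exactly this. The paper's route, in exchange, avoids induction: it gives a direct ``prune, then fill'' description of which coordinates survive, once the pruning is made precise.

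\textbf{Minor points.} Both arguments use only that $L$ has no ideals other than $0$ and $L$, so your remark about perfectness is unnecessary. Of your two ways of handling a possible center, take the second: the standing hypothesis that $L$ is simple. Replacing $L$ by $L/Z$ changes the homomorphisms $\phi_i$ and hence $H$ itself, so it would not prove the lemma as stated.
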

\begin{proof}
Recall, we have $\pi_E\colon H \to E$ and $E' = \rm \pi_E (H) \cong H/K_1$. Note that $E'$ is isomorphic to a subalgebra of $E$ whose projection to each copy of $L$ in $E$ is $L$. We need to show that $E'$ is isomorphic to $\displaystyle \prod_{i \in \mathcal{J}} L$ for some indexing set $\mathcal{J} \subset \mathcal{I}_1$. Let $f_j\colon E' \to E'$ be a map defined as $f_j \big((\phi_i(w))_{i \in \mathcal{I}_1}\big) = (a_i)_{i \in \mathcal{I}_1}$, where $a_i = \phi_i(w)$ for $i \neq j$ and $a_j = 0$. Observe that $\text{Ker}(f_j)= \{\big (\phi_i(w) \big)_{i \in \mathcal{I}_1} \mid  w\in \mathcal{F}_k, \phi_i(w) = 0, \text{ for } i \neq j\} = \prod_{i \in \mathcal{I}_1} U_i$, where $U_i =0$, if $i \neq j$ and $U_j = \{\phi_j(w) \mid w \in \cap_{\mathcal{I}_1 \backslash \{j\}} N_i\}$. Since $\text{Ker}(f_j)$ is an ideal of $E'$, it is an ideal of $\prod_{i \in \mathcal{I}_1} V_j$, where $V_i =0$, if $i \neq j$ and $V_j =L$. Thus, by the fact that $L$ is simple, $\text{Ker}(f_j)$ is either isomorphic to $\{0\}$ or $L$ for each $j$. Now, we take $\mathcal{J}:= \{j \in \mathcal{I}_1 \mid \text{Ker} (f_j) \cong L\}$. Let $j \in \mathcal{I}_1\backslash \mathcal{J}$. Then $\text{Ker} (f_j) \cong \{0\}$ and hence, $E' \cong E'/ \text{Ker} (f_j) \cong \img (f_j)$. Further, note that for $j \in \mathcal{I}_1 \backslash \mathcal{J}$, $\img (f_j)$ is isomorphic to the image of the projection map $\pi_E$ when $\mathcal{I}_1$ is replaced by $\mathcal{I}_1 \backslash \{j\}$. Thus, we can assume $\mathcal{I}_1 = \mathcal{J}$ and hence $E' \subset \displaystyle \prod_{i \in \mathcal{J}} L$. With this assumption, let $ (a_i)_{i \in \mathcal{J}}$ be an element of $\displaystyle \prod_{i \in \mathcal{J}} L$. By the definition of $\mathcal{J}$, for each $j \in \mathcal{J}$, there exists $w_j \in \mathcal{F}_k$ such that $\phi_i(w_j) = 0$ if $i \in \mathcal{I}_1 \backslash {j}$ and $\phi_j(w_j) = a_j$. We choose $w = \sum_{j \in \mathcal{J}} w_j \in \mathcal{F}_k$ and $\pi_E(\phi_M(w)) = (a_i)_{i \in \mathcal{J}}$. Note that $\pi_E(\phi_M(w)) \in E'$ and hence $\displaystyle \prod_{i \in \mathcal{J}} L \subset E'$ which establishes $\displaystyle \prod_{i \in \mathcal{J}} L \cong E'$. 
\end{proof}

We now proceed further to compute $|\mathcal{J}|$.  We have, $\Aut(L)$ acts on the set $\mathcal C = (g_{(i,1)}, \dots, g_{(i,k)})$ of $k$-tuples in $L$ by $\psi.(g_{(i,1)}, \dots, g_{(i,k)}) = (\psi(g_{(i,1)}), \dots, \psi(g_{(i,k)}))$ where $\psi\in \Aut(L)$. This is a free action because, if $\psi \in \Aut(G)$ and 
$$\psi \big( (g_{(i,1)}, g_{(i,2)}, \dots, g_{(i,k)} \big) = \big(\psi (g_{(i,1)}), \psi (g_{(i,2)}), \dots, \psi (g_{(i,k)}) \big) = \big( (g_{(i,1)}, g_{(i,2)}, \dots, g_{(i,k)} \big)$$ then $\psi$ maps a generating set of $L$ identically to itself and hence $\psi$ is identity. We can further restrict this action to the subset of $\mathcal C$ with index $i\in \mathcal I_1$. Note that we have an equivalent action of $\Aut(L)$ on the set of homomorphisms $\{\phi_i \colon \mathcal F_k \rightarrow L \mid i \in \mathcal{I}\}$ given by $\psi.\phi_i = \psi\circ \phi_i$ which preserves the set $\{\phi_i \mid i \in \mathcal{I}_1\}$. Thus, the set of homomorphisms $\{\phi_i \mid i \in \mathcal{I}_1\}$ forms $r = \frac{|\mathcal{I}_1|}{|Aut(L)|}$ orbits. Let us denote these $r$ orbits be $\phi_{i_1}, \phi_{i_2}, \dots, \phi_{i_r}$. Some of the ideas in the proof of the following Lemma are inspired by Proposition 6 and Corollary 7 in~\cite {KL90}.

\begin{example} Note that a pair of generating sets need not be in the same automorphism orbit. Take the following generating sets $S_1=\{e, f\}$ and $S_2=\{h, e+f\}$ for the Lie algebra $L=\Sl_2(q)$. There doesn't exist an automorphism of $L$ that maps $S_1$ to $S_2$. This is because $S_2$ has one element that is semisimple, whereas $S_1$ has both nilpotent elements.
\end{example}

\begin{lemma}\label{lemma:E'Lr}
With the notation as above, $E' \cong L^r$. 
\end{lemma}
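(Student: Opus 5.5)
We need to show $E'\cong L^r$, where $r$ is the number of $\Aut(L)$-orbits on $\{\phi_i \mid i\in\mathcal I_1\}$. By Lemma~\ref{lemma:EJ}, $E'\cong \prod_{i\in\mathcal J}L$ for some $\mathcal J\subseteq \mathcal I_1$, so it suffices to show $|\mathcal J|=r$. The plan is to identify $E'$ with $\mathcal F_k/N$, where $N=\bigcap_{i\in\mathcal I_1}N_i$, and then count kernels. Two homomorphisms in the same orbit have the same kernel, and homomorphisms in different orbits have kernels whose intersection yields a full direct product.

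\textbf{Step 1 (same orbit, same kernel).} If $\phi_j=\psi\circ\phi_i$ with $\psi\in\Aut(L)$, then $N_j=N_i$. Conversely, if $N_i=N_j$ with $i,j\in\mathcal I_1$, then both $\phi_i$ and $\phi_j$ factor through $\mathcal F_k/N_i\cong L$. Hence $\phi_j=\psi\circ\phi_i$ for the automorphism $\psi=\bar\phi_j\circ\bar\phi_i^{-1}$. So distinct kernels correspond bijectively to the $r$ orbits, with representatives $\phi_{i_1},\dots,\phi_{i_r}$, and $N=\bigcap_{s=1}^r N_{i_s}$.

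\textbf{Step 2 (distinct kernels give a full product).} We show by induction on $t$ that the map $\mathcal F_k\to L^t$, $w\mapsto(\phi_{i_1}(w),\dots,\phi_{i_t}(w))$, is surjective. For $t=1$ this holds because $i_1\in\mathcal I_1$.

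For the inductive step, let $H_t$ be the image. Apply Lemma~\ref{Lemma: ideals for a subalgebra of L_1 X L_2} to $H_{t+1}\subset L^t\oplus L$. Both projections are surjective, so there are ideals $I\trianglelefteq L^t$, $J\trianglelefteq L$ and an isomorphism $L^t/I\cong L/J$. Since $L$ is simple, $J=0$ or $J=L$.
\begin{itemize}
\item If $J=L$, then $H_{t+1}=L^t\oplus L$ and we are done.
\item If $J=0$, then $L^t/I\cong L$, and $H_{t+1}$ is the graph of a surjection $\theta\colon L^t\to L$ with $\phi_{i_{t+1}}=\theta\circ(\phi_{i_1},\dots,\phi_{i_t})$.
\end{itemize}
In the second case we must rule things out. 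The ideals of $L^t$ are products of factors (this is the point that needs care; it holds since $L$ is simple with $[L,L]=L$). So $\ker\theta$ is the product of all factors but one, say the $s$-th. Then $\theta$ factors as $\alpha\circ\pi_s$ with $\alpha\in\Aut(L)$, which gives $\phi_{i_{t+1}}=\alpha\circ\phi_{i_s}$. This contradicts $i_{t+1}$ and $i_s$ lying in different orbits.

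\textbf{Step 3 (conclude).} By Step 2, $\mathcal F_k/N\cong L^r$. The map $\pi_E\circ\phi_M$ has kernel exactly $N$, so $E'\cong\mathcal F_k/N\cong L^r$. This is consistent with Lemma~\ref{lemma:EJ} and gives $|\mathcal J|=r$.

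The main obstacle is the claim in Step 2 that ideals of $L^t$ are exactly the sub-products. For $x=(x_1,\dots,x_t)$ in an ideal with $x_s\neq0$, bracketing with elements supported in the $s$-th coordinate produces $[L,x_s]$ there. Simplicity of $L$ then forces the ideal to contain the whole $s$-th factor, since the ideal generated by $x_s$ is $L$ and $[L,L]=L$.

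One caveat: Chevalley algebras may have centre. In that case one works with the simple quotient, which is assumed nonabelian simple throughout this section.
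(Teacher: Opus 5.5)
Your proof is correct and follows essentially the paper's route. Both argue by induction on the number of orbit representatives, using the Goursat-type Lemma~\ref{Lemma: ideals for a subalgebra of L_1 X L_2} to show that $w\mapsto(\phi_{i_1}(w),\dots,\phi_{i_r}(w))$ is onto $L^r$; in the degenerate case $J=0$, two representatives are forced into the same $\Aut(L)$-orbit, a contradiction.

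The differences are minor:
\begin{itemize}
\item You identify $E'$ with $\mathcal F_k/N$ through the kernel--orbit correspondence, whereas the paper uses the twisted-diagonal set $T$.
\item In the case $J=0$ you use directly that ideals of $L^t$ are sub-products, so $\theta=\alpha\circ\pi_s$. This is more transparent than the paper's case split on the $J_m$, which leads to $I_r=\{0\}$.
\end{itemize}
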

\begin{proof}
From Lemma~\ref{lemma:EJ}, we have $E'\cong L^{|\mathcal J|}$. We need to show that $|\mathcal{J}| = r$. For that, we let 
$$T = \{ (u_i)_{i \in \mathcal{I}_1} \in L^{|\mathcal{I}_1|} \mid \text{ If } \psi \circ \phi_i = \phi_j \text{ for some } \psi \in \Aut(L), \text{ then } \psi(u_i) = u_j \}.$$ 
By the definition of $T$, any element $(u_i)_{i \in \mathcal{I}_1} \in T$ is uniquely determined by the $r$-tuple $(u_{i_1}, \dots, u_{i_r})$ because all other coordinates are fixed by the automorphisms. Therefore, $T \cong L^r$. We claim that $T = E'$. 
Let $(\phi_i(w))_{i \in \mathcal{I}_1} \in E'$. If for some $\psi \in \Aut(L)$, $\psi \circ \phi_i = \phi_j$, then $\psi(\phi_i(w)) = \psi \circ \phi_i (w) = \phi_j (w)$, and hence $(\phi_i(w))_{i \in \mathcal{I}_1} \in T \implies E' \subset T$. 

For $T \subset E'$, we proceed by induction on $r$ to prove that the map $\mathcal{F}_k \to L^r$, $w \mapsto \big(\phi_{i_1}(w), \phi_{i_2}(w), \dots , \phi_{i_r}(w)\big)$ is surjective. Since each $\phi_i$ is surjective for $i \in \mathcal{I}_1$, the case $r=1$ is trivial. Assume surjectivity for $r-1$. Let $V$ be the image of $\mathcal{F}_k \rightarrow L^r$. Then by induction hypothesis for $r-1$, we have $V \subset L^{r-1} \oplus L_r$, where $L_r:= \{\phi_{i_r}(w) \mid w \in \mathcal{F}_k\} = L$. 
By Lemma~\ref{Lemma: ideals for a subalgebra of L_1 X L_2}, there exists ideals $I_r$ and $J_r$ such that $L^{r-1}/I_r \cong L_r/J_r$, where $I_r = \{(h_1,h_2, \dots, h_{r-1}) \in L^{r-1} \mid (h_1, h_2, \dots, h_{r-1},0) \in V\}$ and $J_r = \{h_r \in L_r \mid (0, \dots, 0, h_r) \in V\}$. Since $L_r$ is simple, $J_r$ is either $\{0\}$ or $L_r$. If $J_r = L_r$, let $(y_1, y_2, \dots, y_r)$ be an element of $L^r$. Then, by surjectivity for $r-1$ components, there exist $w \in \mathcal{F}_k$ such that $\big(\phi_{i_1}(w), \phi_{i_2}(w), \dots , \phi_{i_r}(w)\big) \in V$ and $y_j = \phi_{i_j}(w)$ for $j \in \{1, 2, \dots, j-1\}$. Since $J_r =L_r$, $(0,\dots, 0, y_r-\phi_{i_r}(w)) \in V$ and thus $(y_1, y_2, \dots, y_r) \in V$. Evidently, $V = L^r$ and $V$ is the maximum quotient of $\mathcal{F}_k$ which is isomorphic to a direct power of $L$. Thus, $L^r=V \cong E'$ which leads to $E' = T$. 

Now, let $J_r = \{0\}$. Note that for any $m \leq r$, we can define $L_m:= \{\phi_{i_j}(w)\mid  w \in \mathcal{F}_k\}$ and apply Lemma~\ref{Lemma: ideals for a subalgebra of L_1 X L_2} to similarly get ideals $I_m$ and $J_m$. If $J_m = L_m$ for any $m$, then by a similar approach, we have $E' = T$. Thus, we can assume that $J_m = \{h_m \in L_m \mid (0, \dots, 0, h_m, 0, \dots, 0) \in V\} = \{0\}$ for all $m$. Since $I_r$ is an ideal of $L^{r-1}$, $J_m= \{0\}$ for all $m$ and $L$ is non-Abelian and simple, we have $I_r = \{0\}$. Hence, the projection $\pi_r\colon V \rightarrow L_r$ is an isomorphism. Similarly, for any $j$, the projection $\pi_j\colon V \to L_j$ is an isomorphism. Hence, for any $j,m \in \{1,2,\dots,r\}$, we get an isomorphism $L_m \to V \to L_j$, sending $\phi_{i_m}(w) \mapsto (\phi_{i_1}(w), \dots, \phi_{i_r}(w)) \mapsto \phi_{i_j}(w)$. This mean there exists an isomorphism $\sigma\colon L_m \rightarrow L_j$ such that for every $w \in \mathcal{F}_k$, $\sigma(\phi_{i_m}(w)) = \phi_{i_j}(w) \implies \sigma \circ \phi_{i_m} = \phi_{i_j}$. Since $\sigma$ is an isomorphism between $L$ and itself, $\sigma \in \Aut(L)$, implying that $\phi_{i_m}$ and $\phi_{i_j}$ have the same orbit, which contradicts our choice of distinct orbits. Thus, $L^r=V \cong E' = T$.
\end{proof}

\begin{lemma}\label{lemma:HE'D'}
With the notation as above, $H \cong E' \times D'$.
\end{lemma}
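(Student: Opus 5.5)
We want to show that $H$ splits as the direct product $E'\times D'$. The plan is to mirror Lubotzky's argument, using Lemma~\ref{Lemma: ideals for a subalgebra of L_1 X L_2} together with the Jordan--H\"older information from Lemma~\ref{lemma:Jordan-Holder} and the structure $E'\cong L^r$ from Lemma~\ref{lemma:E'Lr}.

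First, $H\subset E\times D$ is a subalgebra, and $\pi_E(H)=E'$ and $\pi_D(H)=D'$. So $H$ is a subalgebra of $E'\oplus D'$ that surjects onto both factors. Lemma~\ref{Lemma: ideals for a subalgebra of L_1 X L_2} then gives ideals $I\trianglelefteq E'$ and $J\trianglelefteq D'$, together with an isomorphism $\psi\colon E'/I\to D'/J$, such that
\[
H=\{(x,y)\mid \psi(x+I)=y+J\}.
\]
Here $I=\{x\mid (x,0)\in H\}$, which is $H\cap (E'\times 0)=\ker \pi_D = K_2$, and $J=\{y\mid (0,y)\in H\}= K_1$. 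It suffices to show $I=E'$: then $E'/I=0$, so $D'/J=0$, forcing $J=D'$, and hence $H=E'\times D'$.

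To prove $I=E'$, we use the fact that $E'\cong L^r$ with $L$ simple and non-Abelian. Every ideal of $L^r$ is then a product of some of the factors. One can see this by bracketing with elements supported on a single coordinate and using $[L,L]=L$ together with simplicity; this is the same reasoning as in Lemma~\ref{lemma:E'Lr}. Consequently $E'/I\cong L^s$ for some $s\ge 0$.

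On the other hand, $D'/J\cong H/(K_2+K_1)$. This is a quotient of $D'=\pi_D(H)\cong H/K_2$, and $D'$ is a subalgebra of $D=\prod_{i\in\mathcal I_2}L$ whose projection to each coordinate is a proper subalgebra of $L$.

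The key step is to show that $D'$ has no Jordan--H\"older factor isomorphic to $L$, which is the content of the argument in Lemma~\ref{lemma:Jordan-Holder}. Granting this, if $s\ge1$ then $L$ would be a composition factor of $D'/J$, and hence of $D'$, which is a contradiction. So $s=0$ and $I=E'$, as required.

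The main obstacle is exactly this Jordan--H\"older claim for $D'$. I would argue it by induction on the number of coordinates in $\mathcal I_2$. Project $D'$ onto one coordinate; the image is a proper subalgebra $P$ of $L$, and the kernel embeds in a product with fewer coordinates. A composition factor of $D'$ is then a composition factor either of the kernel or of $P$.

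For $P$, I would use $\dim P<\dim L$. Any composition factor of $P$ has dimension at most $\dim P<\dim L$, so it cannot be isomorphic to $L$. Since everything here is finite dimensional, this dimension count suffices. The same dimension argument handles the base case, which completes the induction.

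Finally, we note that Jordan--H\"older factors behave well under passing to ideals and quotients. This is justified by refining a chain $0\subset \ker\subset D'$ to a composition series and applying the Jordan--H\"older theorem stated earlier.
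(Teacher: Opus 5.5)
Your proof is correct, but it finishes the argument differently from the paper.

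\textbf{The paper's route.} It counts composition factors isomorphic to $L$. Since $D'\cong H/K_2$ has none, every such factor of $H$ lies in $K_2$. Since $E'\cong L^r$ is a quotient of $H$, there are at least $r$ of them. Because $K_2\subset E'\times 0$, a dimension count forces $K_2=E'\times 0$, whence $0\times D'\subset H$ and $H=E'\times D'$.

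\textbf{Your route.} You apply Lemma~\ref{Lemma: ideals for a subalgebra of L_1 X L_2} to $H\subset E'\oplus D'$ and identify $I\cong K_2$, $J\cong K_1$. You then kill the common quotient $E'/I\cong D'/J$ from both sides. The left side is $L^s$, because ideals of $L^r$ are subproducts; your justification works, since $[I,L_j]$ is $[\pi_j(I),L]$ placed in coordinate $j$, and $[L,L]=L$. The right side is a quotient of $D'$ and so has no composition factor isomorphic to $L$.

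\textbf{What each buys.} Your version reuses the Goursat-type lemma already in the paper and avoids both the multiplicity count and the dimension comparison with $E'$. The price is that you need the ideal structure of $L^r$, which the paper's version never uses.

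Both versions rest on the same input: $D'$ has no composition factor isomorphic to $L$. The paper only asserts this, both here and in Lemma~\ref{lemma:Jordan-Holder}. Your coordinate-by-coordinate filtration actually proves it: each successive quotient embeds in a proper subalgebra of $L$, so it has dimension $<\dim L$.
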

\begin{proof}
Now, observe that $K_2= \rm{Ker} (\pi_D) =  \{(\phi_i(w))_{i \in \mathcal{I}} \mid  w \in \mathcal{F}_k \text{ and } \phi_i(w) = 0, \text{ for } i \in \mathcal{I}_2\}$ is isomorphic to a Lie subalgebra of $E'$. Further, $H/K_2 \cong D'$, which has no Jordan--H\"{o}lder factors isomorphic to $L$. So, all the Jordan--H\"{o}lder factors of $H$ which are isomorphic to $L$ should appear in $K_2$ only. Since $E' \cong \displaystyle \prod_{i \in \mathcal{J}} L$ is a projection of $H$ and $K_2$ is isomorphic to a Lie subalgebra of $E'$, $K_2 = E' \times \prod_{i \in \mathcal{I}_2} \{0\} \implies E' \times \prod_{i \in \mathcal{I}_2} \{0\} \subset H$. Further, every element of $H$ is of the form $(a,b)$, where $a \in E'$ and $b \in D'$. Thus, if $E' \times \prod_{i \in \mathcal{I}_2} \{0\} \subseteq H$, then $\prod_{i \in \mathcal{I}_1} \{0\} \times D' \subset H$, and hence $H \cong E' \times D'$.
\end{proof}

Now, we complete the proof of the theorem.
\begin{proof}[{\bf Proof of Theorem~\ref{Thm: Image canditates are images}}]

Let $A \subset L$ containing $0$ and closed under automorphisms of $L$. We denote $A' = A \backslash \{0\}$. Because of the existence of $k(L)$, each element of $A'$ can be extended to a generating set of $L$. Hence for each $a \in A'$, there exists $a_2, a_3, \ldots, a_{k}$ such that $(a, a_2, \ldots, a_k) = (g_{(i,1)}, g_{(i,2)}, \dots, g_{(i,k)})$ for some $i \in \mathcal{I}_1$. There is a natural action of $\Aut(L)$ on $L$, and $A'$ is a union of certain orbits under this action. Additionally, $\Aut(L)$ also acts on the set of $k$-tuples $(g_{(i,1)}, g_{(i,2)}, \dots, g_{(i,k)})$ in $L$, and hence on the set of homomorphisms $\{\phi_i\mid  i \in \mathcal{I}\}$. This action preserves the set $\{\phi_i\mid  i \in \mathcal{I}_1\}$. The orbit of any element in $L$ under the action of $\Aut(L)$ on $L$ gives an orbit of $k$-tuples containing the given element in the first coordinate. We define the following element $\bar{z} = (z_i)_{i \in \mathcal{I}}$ of $\displaystyle \prod_{i \in \mathcal{I}} L$ as follows
$$z_i =
\begin{cases}
a_i = g_{(i,1)}, & \text{if } i \in \mathcal{I}_1 \text{ and } g_{(i,1)} \in A',\\
0,   & \text{otherwise}.
\end{cases}
$$
We claim that $\bar{z} \in H = E' \times D'$. By definition, the projection of $\bar{z}$ to $D$ is zero. So, we just need to show that its projection to $E$ lies in $E'$. Note that an element $(u_i)_{i \in \mathcal{I}_1} \in E'$ if and only if whenever $\psi \circ \phi_i = \phi_j$ for some $i \in \mathcal{I}_1$ and $\psi \in \Aut(L)$, $\psi(u_i) = u_j$. So, $E'$ is embedded diagonally in $E$ twisted by $\Aut(L)$. Let $i,j \in \mathcal{I}_1$ be such that $\psi \circ \phi_i = \phi_j$. Then $\psi(a_i) = a_j$. We need to show that $\psi(z_i) = z_j$. If $a_i \notin A'$, then $z_i =0$ gives $\psi(z_i) = 0$. Since $A'$ is $\Aut(L)$-invariant, $\psi(a_i) = a_j \notin A' \implies z_j=0 =\psi(z_i)$. If $a_i \in A'$, then $z_i =a_i \implies \psi(z_i) =\psi(a_i) = a_j$. Since $A'$ is $\Aut(L)$-invariant, $a_j \in A' \implies z_j=a_j = \psi(z_i)$. Hence, $\pi_E(\bar{z}) \in E'$ and thus $\bar{z} \in H$. Moreover, each element of $A'$ can be extended to a generating set of $L$ and is thus equal to some coordinate of $\bar{z}$. 

The above description shows that $\bar{z} \in H$ and hence there exists $w \in \mathcal{F}_k$ such that $\phi_M(w) = \bar{z}$. Consequently, for each $i \in \mathcal{I}$, $z_i= \phi_i(w) = w(g_{(i,1)}, g_{(i,2)}, \dots , g_{(i,k)})$. This shows that for this $w \in \mathcal{F}_k$, $w(L) = \{z_i\mid i \in \mathcal{I}\}= A' \cup \{0\} = A$.
\end{proof}

\section{Some Examples}\label{examples}
In this section, we present examples to illustrate the strength and limitations of the Theorem proved earlier. 

\begin{example}
The conclusion of Theorem \ref{Thm: Image canditates are images} may not hold if $L$ is not a simple Lie algebra. For example, if $L = \Sl_2(q) \oplus \Sl_2(q)$, then the automorphism orbits of $L$ are of the type $(O_1, O_2) \cup (O_2,O_1)$, where $O_1$ and $O_2$ are automorphism orbits of $\Sl_2(q)$. Thus, if $O_1 \neq O_2$, then for any Lie polynomial $w$, $w(L) = w(\Sl_2(q) \oplus \Sl_2(q)) = w(\Sl_2(q)) \oplus w(\Sl_2(q)) \ne (O_1, O_2) \cup (O_2,O_1) \cup (0,0)$. Hence, $(O_1, O_2) \cup (O_2, O_1) \cup (0,0)$ satisfies the two required criteria in the theorem to be an image, but is not an image.
\end{example}
\begin{example}
In~\cite{BGKP12}, examples of Lie polynomials with dominant image are given in Remark 3.6, 3.7, and Example 3.8. Further, taking the sum of images of two dominant morphisms will give surjectivity. Using this, the authors show surjectivity of Engel polynomials under some mild hypotheses.
\end{example}

\begin{example}
Some examples of Lie polynomials giving particular images are listed in Section 2.2, Example 1 of~\cite{KMR17}. The authors also show that (see Theorem 3) over an algebraically closed field of characteristic $\neq 2$, the image of any Lie polynomial on $\Sl_2$ is one of the following: $\Sl_2$, $\{0\}$, or the set of non-nilpotent elements in $\Sl_2$. Clearly, our main Theorem fails in this case.
\end{example}

The following proposition establishes that the conclusion of Theorem \ref{Thm: Image canditates are images} does not hold if $L$ is a non-abelian, non-perfect Lie algebra.

\begin{proposition}
Let $L$ be a Lie algebra over a field $F$ such that $0 \neq [L,L] \neq L$. Then $L$ has a subset which is not an image of any word.  
\end{proposition}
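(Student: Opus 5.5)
We need a subset $A\subset L$ that contains $0$ and is closed under automorphisms but is not $w(L)$ for any Lie polynomial $w$. The plan is to take $A = \{0\}\cup (L\setminus [L,L])$, the set of elements outside the derived algebra together with zero.

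First, $A$ is admissible. Every automorphism $\phi$ satisfies $\phi([L,L]) = [L,L]$, so $\phi$ permutes $L\setminus[L,L]$. Hence $\phi(A)=A$, and clearly $0\in A$.

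Next, write an arbitrary $w\in\mathcal F_k$ as $w = \sum_{j} c_j x_j + w'$. Here the $c_j\in F$ are scalars and $w'$ lies in $[\mathcal F_k,\mathcal F_k]$, i.e.\ $w'$ is a linear combination of Lie monomials of degree at least $2$. Every evaluation of $w'$ lies in $[L,L]$. We split into two cases.

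\emph{Case 1: all $c_j=0$.} Then $w(L)\subseteq [L,L]$. Since $[L,L]\neq L$, there is an element $a\in L\setminus[L,L]$. This $a$ lies in $A$ but not in $w(L)$, so $w(L)\neq A$.

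\emph{Case 2: some $c_j\neq 0$, say $c_1\neq 0$.} We show $w(L)$ meets $[L,L]\setminus\{0\}$, which is disjoint from $A$. The hypothesis $[L,L]\neq 0$ gives some $0\neq b\in[L,L]$. Substitute $x_1 = c_1^{-1} b$ and $x_j = 0$ for $j\geq 2$. Every monomial of degree $\geq 2$ evaluates to something lying in the subalgebra generated by the single element $b$. That subalgebra is the line $Fb$, on which the bracket vanishes, so every bracket of degree $\geq 2$ in $b$ is zero. Therefore $w$ evaluates to $c_1 c_1^{-1} b = b$. Thus $b\in w(L)$, while $b\notin A$ because $b\in[L,L]$ and $b\neq 0$. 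Again $w(L)\neq A$.

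The only point requiring care is Case 2: one must make sure the higher-degree terms vanish. Setting all variables but one to zero handles this, since then every bracket involves only a single element, and $[b,b]=0$ kills everything of degree at least $2$. Both hypotheses are used: $[L,L]\neq L$ in Case 1 and $[L,L]\neq 0$ in Case 2.
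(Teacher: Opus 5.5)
Your proof is correct and follows the paper's argument: it uses the same set $(L\setminus[L,L])\cup\{0\}$ and the same split according to whether the linear part $\sum_j c_j x_j$ of $w$ vanishes. The only difference is that the paper simply asserts $w(L)=L$ when the linear part is nonzero, whereas your substitution $x_1=c_1^{-1}b$, $x_j=0$ for $j\ge 2$ justifies that step (the same substitution with arbitrary $b\in L$ proves $w(L)=L$) and also makes explicit where the hypothesis $[L,L]\neq 0$ is used.
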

\begin{proof}
Let $\mathcal{F}_k= \langle x_1, x_2, \dots, x_k \rangle$ be the free Lie algebra and $w \in \mathcal{F}_k$ be a Lie polynomial. Then $w = w' + \sum_{i=1}^k a_i x_i$, where $w' \in [\mathcal{F}_k, \mathcal{F}_k]$ and $a_i \in F$. If $(a_1, a_2, \ldots, a_k) \neq 0$, then $w(L) =L$, and if $(a_1, a_2, \ldots, a_k) = 0$, then $w = w'$ and hence $w(L) = w'(L) \subset [L, L]$. Evidently, if $A \subset L \backslash [L,L]$ is an automorphism invariant subset of $L$, then $A \cup \{0\}$ satisfies the properties to be an image set which can not be the image for any Lie polynomial. In particular, $L \backslash [L,L] \cup \{0\}$ is a subset that contains $0$ and is closed under automorphisms but is not an image of any word on $L$.
\end{proof}

\begin{example}
We can apply the above Proposition to $\mathfrak gl_n(q)$ as $[\mathfrak gl_n(q), \mathfrak gl_n(q)] =\Sl_n(q)$. Hence, the set containing $0$ and a non-trivial element of the center can not be the image of any Lie polynomial. 
\end{example}


\begin{example}
Let $w(x_1, \ldots, x_{k_1}) \in \mathcal F_{k_1}$ and $u(y_1, \ldots, y_{k_2}) \in \mathcal F_{k_2}$ then the image of $w+u$ on a Lie algebra is $w(L) + u(L)$. In~\cite{BGKP12}, this is used to prove surjectivity of Engel polynomials in certain cases. 
\end{example}

\section{Images of polynomial maps on $\Sl_2(q)$}\label{image1}
For a given subset $A$ of $L$ having $0$ and closed under automorphism, the theorem proved earlier doesn't explicitly give a word that gives $A$ as its image. In the remaining part of this article, we give examples of words on $L=\Sl_2(q)$ that can give such subsets as the image.

\subsection{$\rm{GL}_2(\mathbb F_q)$-orbits in $\Sl_2(q)$} 
Let $\mathbb{F}_q$ be a finite field of size $q$ for some $q = p^r$, where $r >0$ and $p$ is an odd prime. The only Lie algebra automorphisms for the Lie algebra $\Sl_2(q)$ are the automorphisms defined by the conjugation action of the group $\rm{GL}_2(\F_q)$ on the Lie algebra $\Sl_2(q)$ (see~\cite{St61}). Also, since the scaler matrices act trivially, the group of Lie algebra automorphisms for the Lie algebra $\Sl_2(q)$ is isomorphic to $\rm{PGL}_2(\F_q)$. Additionally, the characteristic polynomial for an element $A\in \Sl_2(q)$ is $X^2 + \det(A)$, thus $A^2=-\det(A)$. Hence, the nonzero orbits under the above action of $\Sl_2(q)$ are characterized by the determinant values. All non-zero nilpotent matrices of $\Sl_2(q)$ have zero determinant, and they form a single orbit while the semisimple orbits are in one-to-one correspondence with the non-zero determinant values. Let $\left\{e = \begin{pmatrix} 0&1\\0&0\end{pmatrix}, f = \begin{pmatrix} 0&0 \\ 1&0\end{pmatrix}, h= \begin{pmatrix} 1&0\\0&-1\end{pmatrix}\right \}$ be the standard basis of $\Sl_2(q)$. The Lie algebra relations are $[h,e]=2e, [h,f]=-2f, [e,f]=h$. Then note that $\det(e + af) = -a$ for any $a \in \F_q$. Thus, the set $\{e + af \mid  a\in \F_q\} \cup \{0\}$ is a set of orbit representatives for the automorphism action on $\Sl_2(q)$. The orbits of the action are listed in the Table~\ref{tab:orbits}. 

\begin{table}[]
   \centering
    \begin{tabular}{|l|c|c|c|c|}\hline
orbit type & & Representative & determinant & no of orbits  \\ &&&&\\ \hline
Zero  &$\mathcal O$ & 0 & 0&1 \\\hline
Nilpotent &$\mathcal O_1$& $e=\begin{pmatrix} 0&1\\ 0&0 \end{pmatrix}$ & $\det = 0$ & 1\\ \hline
Split Semisimple& $\mathcal O_2$& $e+af \sim \begin{pmatrix} \lambda & \\ &-\lambda \end{pmatrix}$ & $\det =-a= -\lambda^2$, & $\frac{q-1}{2}$ \\
& & & $a=\lambda^2 \in  {\mathbb F_q^*}^2$& \\ \hline
Anisotropic Semisimple & $\mathcal O_3$ & $e+af \sim \begin{pmatrix}  & a \\ 1&  \end{pmatrix}$ & $\det = -a, a \not\in {\mathbb F_q^*}^2$& $\frac{q-1}{2}$\\ \hline 
\end{tabular}
\caption{$\rm{GL}_2(\mathbb F_q)$-orbits in $\Sl_2(q)$} \label{tab:orbits}
\end{table}

The total number of orbits is $q+1$, characterized by the determinant of the matrix, except for determinants zero, where these belong to two different types. We further note that $q\equiv 1 \mod 4$ if and only if $-1 \in {\mathbb F_q^*}^2$. In that case, $\det(A) \in {\mathbb F_q^*}^2$ (and $\det(A) \not\in {\mathbb F_q^*}^2$ ), if and only if the element is split semisimple (and anisotropic semisimple).
 
\subsection{Working Technique} Let $w \in \mathcal F_2$ be a Lie polynomial and $L$ be a Lie algebra. Let $\mathcal O_L$ be a set of orbit representatives for the automorphism action on $L$. Then, to find the image of $w$ on $L$, we follow the steps below.
\begin{enumerate}
\item Find the set $w(L, \mathcal O_L):= \{w(x,y) \mid  x \in L, y \in \mathcal O_L\}$.
\item Let $\mathcal O_{w(L)}:= \{x \in \mathcal O_L \mid \psi(x) \in w(L, \mathcal O_L)\text{ for some } \psi \in \Aut(L)\}$. 
\item Obtain the set $\bigcup \mathcal O_{w(L)}$. 
\end{enumerate}

We claim that the set $\bigcup \mathcal O_{w(L)}$ obtained above is equal to the image set $w(L)$. For this, first observe that $\bigcup \mathcal O_{w(L)} \subset w(L)$ since $w(L)$ is an automorphism invariant subset of $L$. Also, if $x, y \in L$ and $y' \in L$ be such that there exist an automorphism $\psi$ of $L$ with $\psi(y')=y$, then $w(x, y) = w(x, \psi(y'))= \psi \big(w(\psi^{-1}(x), y')\big)$. Thus, the set $\mathcal O_{w(L)}$ contains representatives of all orbits that constitute the set $w(L)$.

\begin{remark}\label{one variable over representatives}
If we fix $L = \Sl_2(q)$, then as discussed above, we can choose $\mathcal O_L$ as the $\{e + af \mid a \in \F_q\} \cup \{0\}$. Also, to calculate $w(L)$ by the above technique, we lose nothing by replacing $\mathcal O_L$ with $\mathcal O_L\backslash\{0\}$. So, to find the set $w(L): = \{w(x, y)\mid x, y \in L \}$, we first obtain the set $w(L, \mathcal O(L)= \{w(x, y) \mid x \in \Sl_2(q), y = e + af \text{ for some }a \in \F_q \}$, and then find the set of possible determinant values for the elements of $w(L, \mathcal O(L))$ to figure out the step (2). 
\end{remark}

\subsection{$ad$-map on $\Sl_2(q)$} 
Let $A, X\in \Sl_2(q)$. We denote the map $ad_A\colon \Sl_2(q) \rightarrow \Sl_2(q)$ given by $ad_A(X) = AX-XA$, and more generally $ad_A^n(X) = ad_A(ad_A^{n-1}(X))$ which are nothing but the Engel words. Further, note that ${ad_A(X)}^n$ would be a totally different thing, not to be confused with the above notation. Our aim to find Lie polynomials that give their image the union of one or more orbits, together with the zero orbit, will involve these maps. 

Recall that for any $A\in \Sl_2(q)$, we have $A^2=-d$, a scalar, where $d=\det(A)\in \mathbb F_q$. We have the following:
\begin{lemma}
$Aad_A(X)A = -A^2ad_A(X)$. 
\end{lemma}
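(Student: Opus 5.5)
We need to prove $A\,ad_A(X)\,A = -A^2\,ad_A(X)$ for $A, X \in \Sl_2(q)$, using the fact $A^2 = -d\,I$ with $d = \det(A)$, which follows from Cayley--Hamilton and trace zero.

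The plan is to expand directly. By definition $ad_A(X) = AX - XA$, so
\[
A\,ad_A(X)\,A = A(AX - XA)A = A^2XA - AXA^2 .
\]
Since $A^2 = -d\,I$ is a scalar matrix, it commutes with every matrix. The two terms therefore become $-d\,XA$ and $-d\,AX$, and
\[
A\,ad_A(X)\,A = -d\,XA + d\,AX = d\,(AX - XA) = d\,ad_A(X).
\]
On the other side, $-A^2\,ad_A(X) = d\,ad_A(X)$. The two sides agree, which proves the identity.

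There is no genuine obstacle here. The only point to check is the scalar identity $A^2 = -\det(A)\,I$ for trace-zero $2\times 2$ matrices, which the paper has already recorded just above the statement. The same computation also shows that $ad_A(X)$ anticommutes with $A$ whenever $d \neq 0$, since $A\,ad_A(X) = -ad_A(X)\,A$ follows after cancelling one factor of $A$. I would note this reformulation because it is likely what gets used later.
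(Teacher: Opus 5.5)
Your proof is correct and is the paper's argument: expand $A(AX-XA)A = A^2XA - AXA^2$ and use that $A^2 = -\det(A)\,I$ is scalar to factor it out (the paper writes this step as $A^2(XA-AX)$, while you substitute $-d$ explicitly). One small correction to your aside: the anticommutation $A\,ad_A(X) = -ad_A(X)\,A$ holds for every $A \in \Sl_2(q)$, nilpotent ones included, since $A\,ad_A(X) + ad_A(X)\,A = A^2X - XA^2 = 0$; so the restriction $d \neq 0$ is unnecessary, and this matters because the paper later uses the identity for $A = e + af$ with arbitrary $a$, including $a = 0$.
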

\begin{proof}
This is a simple computation using $A^2$, which is a scalar and thus commutes. Check, $Aad_A(X)A = A(AX-XA)A = A^2XA - AXA^2 = A^2(XA-AX) = -A^2 ad_A(X)$. 
\end{proof}

\begin{lemma}\label{higher ad maps}
For $n\geq 1$, $ad_A^n(X) = 2^{n-1}A^{n-1} ad_A(X)$.
\end{lemma}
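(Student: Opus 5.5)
Induction on $n$ handles this; the only extra ingredient is an identity expressing $ad_A$ applied to $A^{m}ad_A(X)$ in terms of $A^{m+1}ad_A(X)$. The base case $n=1$ is trivial, since $2^0A^0ad_A(X)=ad_A(X)$.

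The key preliminary step is to show $ad_A(X)A=-A\,ad_A(X)$, i.e.\ that $A$ anticommutes with $ad_A(X)$. Because $A^2=-d$ is a scalar, this follows directly:
\[
ad_A(X)A=AXA-XA^2=AXA-A^2X=-A(AX-XA)=-A\,ad_A(X).
\]
The preceding lemma $A\,ad_A(X)A=-A^2ad_A(X)$ is this identity multiplied on the left by $A$; the direct computation is cleaner. Iterating gives $ad_A(X)A^{m}=(-1)^mA^{m}ad_A(X)$, and hence, since $A$ commutes with its own powers, $A^{m}ad_A(X)\,A=(-1)^{m}A^{m+1}ad_A(X)$.

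For the inductive step, assume $ad_A^{n}(X)=2^{n-1}A^{n-1}ad_A(X)$. Then
\[
ad_A^{n+1}(X)=2^{n-1}\bigl(A\cdot A^{n-1}ad_A(X)-A^{n-1}ad_A(X)\cdot A\bigr)=2^{n-1}\bigl(A^{n}ad_A(X)-(-1)^{n-1}A^{n}ad_A(X)\bigr).
\]

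This is where the difficulty lies: the bracket produces a sign $(-1)^{n-1}$ that depends on parity. A more careful check uses $A^2=-d$. For $n$ odd, $A^{n-1}=(-d)^{(n-1)/2}$ is a scalar, so $A^{n-1}ad_A(X)$ commutes with $A$ only up to the anticommutation just established. We have $ad_A(\lambda\,ad_A X)=\lambda(A\,ad_AX-ad_AX\,A)=2\lambda A\,ad_AX$, which gives the factor $2$. For $n$ even, $A^{n-1}=(-d)^{(n-2)/2}A$, and $A\,ad_A(X)$ commutes with $A$ (two anticommutations), so the bracket would vanish.

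So the naive sign computation gives $2^{n}A^{n}ad_A(X)$ only when $n-1$ is even. I would therefore recheck the sign carefully, because the claimed formula requires $A^{m}ad_A(X)$ to anticommute with $A$ for every $m$. Writing $Y=ad_A(X)$, the correct relation is $AY=-YA$, so $A^{m}Y=(-1)^mYA^{m}$, and for every $m\ge 1$
\[
ad_A(A^{m}Y)=A^{m+1}Y-A^{m}YA=A^{m+1}Y-(-1)^{m}\ldots
\]
One must be careful here: $A^{m}YA=A^{m}(YA)=A^{m}(-AY)=-A^{m+1}Y$. This holds for every $m$, since $YA=-AY$ is applied directly and no commutation of $Y$ past $A^m$ is needed. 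Hence $ad_A(A^{m}Y)=2A^{m+1}Y$ for all $m$, and my parity worry above came from commuting in the wrong order.

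The induction step is therefore simply $ad_A^{n+1}(X)=2^{n-1}ad_A(A^{n-1}Y)=2^{n}A^{n}Y$, which completes the proof. The only real obstacle is keeping the side of multiplication straight in the anticommutation step.
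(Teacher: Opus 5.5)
Your final argument is correct and is the paper's proof: the paper's induction step is $ad_A^{n+1}(X)=2^{n-1}\bigl(A^{n}ad_A(X)-A^{n-1}ad_A(X)A\bigr)=2^{n}A^{n}ad_A(X)$, and your direct derivation of $ad_A(X)A=-A\,ad_A(X)$ from $A^2$ being scalar justifies $A^{n-1}ad_A(X)A=-A^{n}ad_A(X)$ for every $n\ge 1$. This includes $n=1$, where the paper's preceding lemma $A\,ad_A(X)A=-A^2ad_A(X)$ alone would require cancelling $A$, which fails for nilpotent $A$. Delete the retracted detour in a write-up: the claims $A^{m}ad_A(X)A=(-1)^mA^{m+1}ad_A(X)$ and that $A\,ad_A(X)$ commutes with $A$ are false. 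Since $A$ commutes with itself, $A^m ad_A(X)$ anticommutes with $A$ for every $m$.
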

\begin{proof}
The proof is simply by induction. Clearly, it is true for $n=1$. Let us assume this for $n$ and show for $n+1$. We have,
$ad_A^{(n+1)}(X) = ad_A(ad_A^n(X)) = ad_A(2^{n-1}A^{n-1} ad_A(X)) = 2^{n-1}(A^n ad_A(X) - A^{n-1}ad_A(X)A) = 2^{n-1}(A^n ad_A(X) + A^n ad_A(X)) = 2^nA^n ad_A(X)$. 
\end{proof}
In what follows, we frequently use the fact that every element $\alpha\in \mathbb F_q$ satisfies $\alpha^q = \alpha$, and for $\alpha\neq 0$, we have $\alpha^{q-1}=1$. The same will also hold for any diagonal matrix in $M_2(\mathbb F_q)$.

\section{Sum of Engel words on $\Sl_2(q)$}\label{image2}

Let us consider the Lie polynomial $f(A, X) = ad_A^i(X)- ad_A^{j}(X)$ with $i, j \geq 1$ and determine its image in certain cases. Note that when $A=0$, we get $0$ in the image. So, while doing computations, we may assume $A\neq 0$. 

\begin{proposition}\label{prop:nilotent}
Suppose $q$ is odd. Then the set of all nilpotent elements (including zero) is obtained as the image of the Lie polynomial $ad_A^2(X) - ad_A^{2q}(X)$. 
\end{proposition}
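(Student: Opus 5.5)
Using Lemma~\ref{higher ad maps}, I would write
\[
ad_A^2(X) - ad_A^{2q}(X) = 2A\,ad_A(X) - 2^{2q-1}A^{2q-1}ad_A(X).
\]
Since $2^{q}=2$ in $\F_q$, we get $2^{2q-1}=2^{q}\cdot 2^{q-1}=2$. Since $A^2=-d$ with $d=\det A$, we get $A^{2q-1}=(-d)^{q-1}A$. Hence the polynomial equals
\[
2\bigl(1-(-d)^{q-1}\bigr)A\,ad_A(X).
\]
If $d\neq 0$, then $(-d)^{q-1}=1$, so the value is $0$. If $d=0$, the value is $2A\,ad_A(X)=ad_A^2(X)$. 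Thus the image consists of $0$ together with the values $ad_A^2(X)$ for nonzero nilpotent $A$.

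By the working technique and Remark~\ref{one variable over representatives}, it suffices to take $A=e$, the representative of the nilpotent orbit. The zero orbit and the semisimple representatives $e+af$ with $a\neq0$ contribute only $0$.

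Next I would compute $ad_e^2(X)$ for a general $X=\alpha h+\beta e+\gamma f$. We have $[e,X]=-2\alpha e+\gamma h$, and then
\[
[e,[e,X]]=\gamma[e,h]=-2\gamma e.
\]
So every value is a multiple of $e$, hence nilpotent or zero. This gives the inclusion of the image in the nilpotent set.

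For the reverse inclusion, take $\gamma=-1/2$, which is possible since $q$ is odd. Then $ad_e^2(X)=e$, so the nilpotent orbit $\mathcal O_1$ meets the image. Since the image is closed under automorphisms and $\mathcal O_1$ is a single $\mathrm{GL}_2(q)$-orbit (Table~\ref{tab:orbits}), all of $\mathcal O_1$ lies in the image. Together with $0$, the image is exactly the set of nilpotent elements.

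The main point needing care is the scalar simplification $2^{2q-1}=2$ and $(-d)^{q-1}\in\{0,1\}$. This rests on the fact that $d\in\F_q$ and that $A^{2q-1}=(A^2)^{q-1}A$. I would also note that the formula of Lemma~\ref{higher ad maps} is valid for $A$ nilpotent: there $A^2=0$, so $ad_A^n(X)=0$ for all $n\ge3$, which is consistent with the formula.
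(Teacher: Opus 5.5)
Your proof is correct and follows the paper's argument. You use the same collapse via Lemma~\ref{higher ad maps}, which gives $0$ when $\det A\neq 0$ and $2A\,ad_A(X)$ when $\det A=0$, followed by the explicit computation $ad_e^2(X)=-2\gamma e$. The only cosmetic difference is in checking nilpotency: the paper shows $(2A\,ad_A(X))^2=0$ directly, while you reduce to the representative $A=e$ and read nilpotency off the explicit form, which is equally valid.
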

\begin{proof}
We note that $f(A,X) = ad_A^2(X) - ad_A^{2q}(X) = (2A - 2^{2q-1}A^{2q-1}) ad_A(X) = 2A(1 - 2^{2(q-1)}A^{2(q-1)}) ad_A(X)$. Now, $2^{2(q-1)}=1$, and 
$A^{2(q-1)} = (-\det(A))^{q-1} = \begin{cases}
 1 & \text{ if} \det(A)\neq 0 \\ 0 &  \text{otherwise}. \end{cases}$
Thus, 
$$f(A,X) = \begin{cases} 0 & \text{when} \det(A) \neq 0 \\ 2A ad_A(X) & \text{when} \det(A)=0 \end{cases}.$$
when $A$ has determinant $\neq 0$, i.e., $A$ is semisimple, we get $0$ as image for any such $A$. If $A$ is nilpotent, we claim that the image is always nilpotent and non-trivial. 

Checking nilpotent is easy by showing the square is $0$. For this, take $(2Aad_A(X))^2 = 4 Aad_A(X)A ad_A(X) = 4 A^2 ad_A(X)ad_A(X) =0$ since $A$ is nilpotent. Take $A=e$, $X=uh+ve+rf$, then $ad_A(X) =[e, uh+ve+rf] = -2ue + rh$, and $ad_A^2(X) =[e, -2ue + rh] = -2re$. This completes the proof.
\end{proof}

\noindent Now, we obtain nilpotent union all split semisimple orbits as the image. 
\begin{proposition}
Suppose $q$ is odd. Then the set of all nilpotent and all split semisimple elements (together with zero) is the image of the Lie polynomial $ad_A(X) - ad_A^{2q-1}(X)$. 
\end{proposition}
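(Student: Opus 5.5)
By Lemma~\ref{higher ad maps}, $ad_A^{2q-1}(X) = 2^{2q-2}A^{2q-2}ad_A(X)$. Since $2^{2(q-1)}=1$ and $A^2=-d$ with $d=\det(A)$, we get
$$f(A,X)=ad_A(X)-ad_A^{2q-1}(X) = \bigl(1-(-d)^{q-1}\bigr)ad_A(X).$$
The plan is to split into cases on the orbit of $A$, as in the proof of Proposition~\ref{prop:nilotent}.

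\emph{Case $d\neq 0$ ($A$ semisimple).} Then $(-d)^{q-1}=1$, so $f(A,X)=0$ for every $X$.

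\emph{Case $d=0$ ($A$ nilpotent or zero).} Then $f(A,X)=ad_A(X)=[A,X]$. By the working technique and Remark~\ref{one variable over representatives}, we may take $A=e$. Writing $X=uh+ve+rf$, we get $[e,X]=-2ue+rh$. Its determinant is $\det\begin{pmatrix} r & -2u\\ 0 & -r\end{pmatrix}=-r^2$.

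As $u,r$ range over $\F_q$, the determinant $-r^2$ takes exactly the values $0$ and $-\lambda^2$ with $\lambda\in\F_q^*$. The value $\det=0$ occurs with a nonzero element, for instance $r=0,u\neq0$ gives $-2ue$, so the nilpotent orbit $\mathcal O_1$ is hit. Each value $-\lambda^2\neq 0$ is exactly the determinant of the split semisimple representatives in Table~\ref{tab:orbits}, namely $\mathrm{diag}(\lambda,-\lambda)$. Conversely, no value $-a$ with $a\notin{\F_q^*}^2$ arises. So the image meets $\mathcal O_1$, every split semisimple orbit, and no anisotropic orbit.

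Since the image is automorphism invariant and orbits are classified by determinant (with the nilpotent/zero distinction at $0$), the image equals $\{0\}\cup\mathcal O_1\cup\mathcal O_2$.

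The only delicate step is the bookkeeping that ``split'' means $\det=-\lambda^2$ for some $\lambda\neq 0$, independently of whether $-1$ is a square. Then one confirms that $-r^2$ hits exactly those values. This follows directly from the table's description $\det(e+af)=-a$ with $a=\lambda^2$. Alternatively, one can observe that $rh-2ue$ is upper triangular with eigenvalues $\pm r$, hence split, which avoids the square-class discussion entirely.
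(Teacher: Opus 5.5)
Your proposal is correct and is essentially the paper's proof. The paper likewise writes $f(A,X)=(1-2^{2q-2}A^{2q-2})\,ad_A(X)$, which vanishes when $\det(A)\neq 0$ and equals $ad_A(X)$ otherwise. It then computes $[ae,\,b_1e+b_2f+b_3h]=ab_2h-2ab_3e$ with determinant $-(ab_2)^2$, which is your $[e,X]=rh-2ue$ with $\det=-r^2$ up to keeping a scalar in $A=ae$ instead of reducing to the orbit representative $A=e$.
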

\begin{proof}
We note that $f(A, X) = ad_A(X) - ad_A^{2q-1}(X) = (1 - 2^{2q-2}A^{2q-2}) ad_A(X)$. Since $2^{2(q-1)}=1$ and $A^{2(q-1)} = (-\det(A))^{q-1} = \begin{cases} 1 & \text{ if} \det(A)\neq 0 \\ 0 & \text{otherwise},
\end{cases}$ we get the following:
$$f(A,X) = \begin{cases} 0 & {\text when} \det(A) \neq 0 \\ ad_A(X) & {\text when}  \det(A)=0. \end{cases}$$
Thus, when $A$ is semisimple, no matter what $X$ is, we get $0$ as the image of $f(A, X)$. We claim that when $A$ is nilpotent, $ad_A(X)$ is either nilpotent or split semisimple. In fact, both nilpotent and split semisimple elements are present in the image of $f(A, X)$ in that case. 

Since $\det(A) =0$, it is enough to assume $A = ae$ and $X = b_1e + b_2 f + b_3h$, for some $a, b_1, b_2, b_3 \in \F_q$. With these assumptions, $ad_A(X)= [ae, b_1e + b_2 f + b_3h] = ab_2h - 2 a b_3e$ which has determinant equal to $-{(ab_2)}^2$. This is zero when $ab_2 =0$ and thus in that case $ad_A(X)= -2ab_3e$. Hence, by taking $b_2=0$, we see that the nilpotent orbit is in the image. Further, note that if $-{(ab_2)}^2 \ne 0$, then $ad_A(X)$ is a split semisimple element with characteristic polynomial $Y^2 - {(ab_2)}^2$. This completes the proof.
\end{proof}

\noindent Now we show that all semisimple elements, together with $0$, can be obtained as images.
\begin{proposition}\label{prop: all semisimple orbits}
Suppose $q$ is odd. Then, the image of the Lie polynomial $ad_A^{q+1}(X) - ad_A^{2q}(X)$ contains all semisimple elements (of both types) together with zero. 
\end{proposition}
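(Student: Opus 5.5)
Using Lemma~\ref{higher ad maps}, I will write
$$f(A,X)=ad_A^{q+1}(X)-ad_A^{2q}(X)=\bigl(2^{q}A^{q}-2^{2q-1}A^{2q-1}\bigr)\,ad_A(X).$$
Since $2^{q}=2$ and $2^{2q-1}=2$ in $\mathbb F_q$, this equals $2A^{q}\bigl(1-A^{q-1}\bigr)ad_A(X)$. The computation then splits according to $A^2=-d$ with $d=\det(A)$.

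\textbf{Nilpotent $A$.} If $d=0$, then $A^2=0$, so $A^{q}=0$ because $q\ge 3$. Hence $f(A,X)=0$.

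\textbf{Semisimple $A$.} If $d\neq 0$, write $q-1=2m$, so that $A^{q-1}=(-d)^m$, which is $\pm 1$ according to whether $-d$ is a square. There are two subcases.

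\emph{Split case.} If $-d$ is a nonzero square, which happens exactly when $A$ is split semisimple, then $A^{q-1}=1$ and $f(A,X)=0$.

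\emph{Anisotropic case.} If $-d$ is a nonsquare, then $A^{q-1}=-1$ and $A^{q}=-A$. This gives $f(A,X)=2(-A)(2)ad_A(X)=-4A\,ad_A(X)$.

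So the image is $\{0\}$ together with $\{-4A\,ad_A(X)\}$, where $A$ runs over anisotropic elements. I must double-check the sign bookkeeping here, since the anisotropic case has to survive for the statement to be true.

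Next I compute with representatives, as in Remark~\ref{one variable over representatives}. Take $A=e+af$ with $a$ a nonsquare, so that $-d=a$. Then $A\,ad_A(X)=A^2X-AXA=aX-AXA$.

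To find the determinant of $A\,ad_A(X)$, I will use $\det(A)\det(ad_A(X))$ together with $ad_A(X)=[A,X]$. For $X=uh+ve+rf$, I get $[e+af,X]=(r-av)h-2ue+2auf$, whose determinant is $-(r-av)^2+4au^2$. Since $\det$ is multiplicative on $2\times 2$ matrices, the determinant of $-4A\,ad_A(X)$ is
$$16\,(-a)\bigl(4au^2-(r-av)^2\bigr).$$
Moreover $A\,ad_A(X)$ is traceless, being a product with anticommuting structure; I will verify $\mathrm{tr}(A[A,X])=\mathrm{tr}(A^2X)-\mathrm{tr}(AXA)=0$. So the image element lies in $\Sl_2(q)$ and is determined up to orbit by this determinant.

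The form $4au^2-s^2$ with $s=r-av$ is an anisotropic binary quadratic form, since $a$ is a nonsquare. It therefore takes every nonzero value of $\mathbb F_q$ (this is the norm form of $\mathbb F_{q^2}$). Multiplying by $-16a\neq 0$, the determinant of $f(A,X)$ ranges over all of $\mathbb F_q^*$. Because semisimple orbits correspond exactly to nonzero determinants, every split and every anisotropic semisimple orbit is attained. The value $0$ is attained at $A=0$.

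The form is anisotropic, so a nonzero $(u,s)$ never gives determinant $0$. Hence no nonzero nilpotent element appears in the image, and the image is exactly the semisimple elements together with $0$.

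The main obstacle is the sign and parity bookkeeping in the split/anisotropic dichotomy, namely whether $A^{q-1}$ equals $+1$ or $-1$ in each case. I would verify this first on small $q$, e.g.\ $q=3$ with $a=-1$. The surjectivity of the norm form onto $\mathbb F_q^*$ is standard.
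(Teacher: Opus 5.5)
Your proposal is correct and follows the paper's proof essentially step for step. You reduce via the higher-$ad$ lemma to $2A^{q}(1-A^{q-1})\,ad_A(X)$, kill the nilpotent and split cases, and get $-4A\,ad_A(X)$ for $A=e+af$ with $a$ a nonsquare (your sign bookkeeping $A^{q-1}=-1$, $A^{q}=-A$ is right); the determinant $-16a\bigl(4au^2-(r-av)^2\bigr)$ then vanishes only when $ad_A(X)=0$ and takes every value in $\mathbb F_q^*$. The only difference is minor: you get surjectivity onto $\mathbb F_q^*$ from the norm form of $\mathbb F_q(\sqrt a)$, while the paper argues by cases on whether $-1$ is a square in $\mathbb F_q$.
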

\begin{proof}
We consider the Lie polynomial  $f(A,X) = ad_A^{q+1}(X) - ad_A^{2q}(X) = 2^qA^q ad_A(X) - 2^{2q-1}A^{2q-1} ad_A(X) = 2A(2^{q-1}A^{q-1} - 2^{2(q-1)}A^{2(q-1)})ad_A(X)$. Now, consider that $\mathbb F_q^*$ is cyclic of order $q-1$, and $A^2 = -d$ where $d=\det(A)$, we get
$$f(A,X) = \begin{cases}2A(A^{q-1}-1) ad_A(X) & \text{when}\ d\neq 0 \\ 2A^q ad_A(X) =0 & \text{when} \ d=0 \end{cases}.$$
When $d=0$, we have $A^2=0$, thus $f(A,X)=0$, as noted above. Further, when $A=\begin{pmatrix} \lambda & \\ & -\lambda\end{pmatrix}$ is split semisimple, we have $A^{q-1} =1$. Thus, $f(A, X) = 0$ in this case too. We note that a possible non-zero element in the image of $f(A, X)$ arises when $A$ is anisotropic semisimple.  

Let $A=e+af$ with $a\not\in {\mathbb F_q^*}^2$. Then, $A^2 = a$ and $A^{q-1} = a^{\frac{q-1}{2}}$. Now, $f(A,X) = 2(a^{\frac{q-1}{2}}- 1)A ad_A(X)$. Since $a \not\in {\mathbb F_q^*}^2$, $a^{\frac{q-1}{2}} \neq 1$. Let $X=uh + ve + rf$, then $ad_A(X) = [e+af, uh + ve +rf] = -2ue +(r-av)h + 2auf$ with determinant $= -(r-av)^2 + 4au^2$. Thus, $\det(Aad_A(X)) = -a(-(r-av)^2 + 4au^2)$. Now, we claim that $\det(Aad_A(X))\neq 0$; otherwise, if $\det(Aad_A(X))= 0$, we get $-(r-av)^2 + 4au^2 = 0$. If $v=0$, we get $4au^2=r^2$, a contradiction that $a$ is a non-square. In general solving for $a$ we get $a= \frac{(u\pm \sqrt{u^2+rv})^2}{v^2}$ which is a contradiction that $a$ is a non-square. Hence, the image has no nilpotents as $\det(Aad_A(X))\neq 0$. This shows that the non-zero elements of image are contained in the subset of semisimple elements. 

Now check that $\det(Aad_A(X)) = a(r-av)^2 - 4a^2u^2$ can take all values over $\mathbb F_q$. If $-1$ is a square, let $i \in \mathbb F_q$ be such that $i^2=-1$. Then, $\det(Aad_A(X)) = a(r-av)^2 - 4a^2u^2 = a(r-av)^2 + (2iau)^2$. Since $a\not\in {\mathbb F_q^*}^2$, by taking $u=0$, $\det(Aad_A(X)) = a(r-av)^2$ gives all non-square values, and by taking $r=0=v$, we get $\det(Aad_A(X)) = (2iau)^2$, which gives all squares. If $-1$ is not a square, then since $a\not\in {\mathbb F_q^*}^2$, $-a$ is a square. Let $-a =b^2$, for some $b \in \mathbb F_q^*$. Thus, $\det(Aad_A(X)) = a\big((r-av)^2 - 4au^2\big) = a\big((r-av)^2 + (2bu)^2\big)$. The sum $(r-av)^2 + (2bu)^2$ is an arbitrary sum of two squares in $\mathbb F_q$, and hence it covers all the values of $\mathbb F_q$. Since $a \ne 0$, $a\big((r-av)^2 + (2bu)^2\big)$ also takes all values over $\mathbb F_q$. Hence, all semisimple elements are in the image.   
\end{proof}

\section{Images of commutators of Engel words on $\Sl_2(q)$}\label{image3} 

In this section, we explicitly evaluate the images of certain Lie polynomials, which will be useful for obtaining images containing only split and anisotropic semisimple elements. We continue to work with $q$ odd. We start with Lie polynomials of the type $[ad_A^{i}(X), ad_A^{j}(X)]$, for $i\neq j \geq 1$ and later consider the Lie polynomials inductively defined as follows:
\begin{eqnarray*}
w_1 &=&\left[ ad_A^i(X)-ad_A^j(X), [ad_A^{i_1}(X), ad_A^{j_1}(X)]\right],\\
w_2 &=&\bigg[\left[ ad_A^i(X)-ad_A^j(X), [ad_A^{i_1}(X), ad_A^{j_1}(X)]\right], [ad_A^{i_2}(X), ad_A^{j_2}(X)]\bigg] \\ &&\quad \vdots \quad \quad \vdots \\
w_n &=& \Bigg[\cdots \bigg[ w_1, [ad_A^{i_2}(X), ad_A^{j_2}(X)]\bigg], \cdots, [ad_A^{i_n}(X), ad_A^{j_n}(X)]\Bigg] = \left[w_{n-1}, [ad_A^{i_n}(X), ad_A^{j_n}(X)]\right].
\end{eqnarray*}
We note that when $A=e+af$, we have
$A^n = \begin{pmatrix} a^{\frac{n}{2}} & 0 \\ 0 & a^{\frac{n}{2}} \end{pmatrix} = a^{\frac{n}{2}} I$ if $n$ is even, and $\begin{pmatrix} a^{\frac{n-1}{2}} & 0 \\ 0 & a^{\frac{n-1}{2}} \end{pmatrix}A = a^{\frac{n-1}{2}} A$ if $n$ is odd. 

\begin{proposition}\label{commutator of Engel words}
Let $w = [ad_A^{i}(X), ad_A^{j}(X)]$. Then the image of $w$ is,
$$w(\Sl_2(q) = \begin{cases}
    0 & \text{ if } i+j \text{ is even}\\
    \Sl_2(q) &  \text{ if } i+j=3\\
    \{0\} \cup \{\text{all semisimple elements}\} & \text{ otherwise.}
\end{cases}$$
\end{proposition}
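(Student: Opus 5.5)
The plan is to reduce everything to a single closed formula via Lemma~\ref{higher ad maps}. Put $Y=ad_A(X)$ and $d=\det(A)$, so that $A^2=-d$. By that lemma, $ad_A^i(X)=2^{i-1}A^{i-1}Y$ and $ad_A^j(X)=2^{j-1}A^{j-1}Y$.

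\emph{Case $i+j$ even.} Then $i-1$ and $j-1$ have the same parity, and we use that $A^2$ is a scalar. If both exponents are even, the two Engel words are scalar multiples of $Y$. If both are odd, they are scalar multiples of $AY$. In either case the bracket vanishes, so $w(\Sl_2(q))=\{0\}$.

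\emph{Case $i+j$ odd.} Without loss of generality $i$ is odd and $j$ is even (swapping $i,j$ only changes the sign). Then
$$w=2^{i+j-2}(-d)^{\frac{i-1}{2}+\frac{j-2}{2}}[Y,AY].$$
Two observations simplify the bracket.
\begin{itemize}
\item $A$ and $Y$ anticommute: $AY+YA=A^2X-XA^2=0$ because $A^2$ is scalar.
\item $Y$ is traceless, so $Y^2=-\det(Y)$ is a scalar.
\end{itemize}
Hence $[Y,AY]=YAY-AY^2=-2AY^2=2\det(Y)A$. This gives the key formula
$$w(A,X)=\kappa\,(-d)^{m}\det(ad_A(X))\,A,\qquad \kappa=2^{i+j-1},\quad m=\tfrac{i+j-3}{2},$$
with the convention $(-d)^0=1$. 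So every value of $w$ is a scalar multiple of $A$, and the whole question becomes: which values does the scalar $\kappa(-d)^m\det(ad_A X)$ take?

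\emph{Values of $\det(ad_A X)$.} Take the representative $A=e+af$ and write $X=uh+ve+rf$. The computation in Proposition~\ref{prop: all semisimple orbits} gives
$$\det(ad_A X)=-(r-av)^2+4au^2.$$
I will show the following.
\begin{itemize}
\item If $a\neq 0$, i.e.\ $A$ is semisimple, then in the variables $s=r-av$ and $u$ this is a nondegenerate binary quadratic form. A nondegenerate binary quadratic form over $\F_q$ with $q$ odd represents every element of $\F_q^*$. So $\det(ad_A X)$ takes every nonzero value.
\item If $a=0$, i.e.\ $A=e$, then $\det(ad_e X)=-r^2$, which takes exactly the values $-(\text{squares})$.
\end{itemize}

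\emph{Orbit-by-orbit conclusion.} To hit a prescribed nonzero $B$ I will use $A=tB$ and $d(tB)=t^2d(B)$. This rescaling gives
$$w(tB,X)=\kappa\,t^{2m+3}(-d_B)^m\det(ad_B X)\,B.$$
\begin{itemize}
\item \emph{$B$ semisimple.} Taking $t=1$, the factor $\det(ad_B X)$ already ranges over all of $\F_q^*$, so we can make the coefficient of $B$ equal to $1$. Thus $B$ is in the image.
\item \emph{$B=e$ nilpotent.} Here $d_B=0$, so the term survives only when $m=0$, i.e.\ $i+j=3$. In that case the coefficient is $-\kappa t^3r^2$. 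Since $t^3r^2=t\cdot(tr)^2$, this sweeps all of $\F_q^*$ as $t$ and $r$ vary. So the nilpotent orbit is in the image, and the image is all of $\Sl_2(q)$.
\item \emph{$i+j\ge 5$ odd.} Now $m\ge1$, so nilpotent $A$ contributes $0$. For semisimple $A$, each value is a scalar multiple of $A$, hence semisimple or zero. Combined with the previous item, the image is exactly $\{0\}\cup\{\text{semisimple elements}\}$.
\end{itemize}

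The main obstacle I expect is the representation step for $\det(ad_A X)$: the claim that the form represents all of $\F_q^*$ in the semisimple case, and the correct treatment of the square-class issue in the nilpotent case via the rescaling by $t$. Everything else is a mechanical consequence of the anticommutation $AY=-YA$ and the scalar identities $A^2=-d$ and $Y^2=-\det(Y)$.
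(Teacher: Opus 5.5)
Your proposal is correct and follows essentially the same route as the paper. Lemma~\ref{higher ad maps} plus parity kills the bracket when $i+j$ is even; otherwise $w(A,X)$ is a scalar multiple of $A$ whose coefficient is a power of $-\det(A)$ times the quadratic form $\det(ad_A X)$, which is then analysed at the representatives $e+af$. The differences are improvements rather than a new strategy: you get $[Y,AY]=2\det(Y)A$ from $AY=-YA$ and $Y^2=-\det(Y)$ instead of by explicit matrix computation. You also make the coefficient of each representative equal to $1$, which avoids the paper's inaccurate aside that for a fixed $a\in\F_q^*$ the determinant values already exhaust $\F_q$ (they only cover $\{0\}\cup(-a)(\F_q^*)^2$, which is harmless there since $e+af$ itself is attained).
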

\begin{proof}
By Remark~\ref{one variable over representatives}, to find the image of a Lie polynomial over $\Sl_2(q)$, we can vary $A$ over the representatives of automorphism orbits and then check which determinant values are possible. So, let $A = e + af$, for some $a \in \F_q$. By Lemma~\ref{higher ad maps}, $[ad_A^{i}(X), ad_A^{j}(X)] = [2^{i-1}A^{i-1}ad_A(X), 2^{j-1}A^{j-1}ad_A(X)] = 2^{i+j-2}[A^{i-1}ad_A(X), A^{j-1}ad_A(X)]$. We can put the values of powers of $A$. Thus, if $i$ and $j$ are simultaneously even or odd, then $[ad_A^{i}(X), ad_A^{j}(X)] = 0$. So, let $i+j$ be odd. Without loss of generality, let $i$ be even, and $j$ be odd (determinant values are unchanged even if we change the order of $i$ and $j$). We let $X = b_1e + b_2 f+ b_3h$. Then for $A = e + af$, $ad_A(X) = [e + af, b_1e + b_2 f+ b_3h] = -2b_3e + 2ab_3f+ (b_2-ab_1)h$. Thus,
\begin{align*}
& [ad_A^{i}(X), ad_A^{j}(X)]= 2^{i+j-2} a^{\frac{i-2}{2}} a^{\frac{j-1}{2}} [A ad_A(X), ad_A(X)] = 2^{i+j-2} a^{\frac{i+j-3}{2}}[A, ad_A(X)] ad_A(X)\\
&= 2^{i+j-2} a^{\frac{i+j-3}{2}} [e+af,\ -2b_3e + 2ab_3f+ (b_2-ab_1)h] \big(-2b_3e + 2ab_3f+ (b_2-ab_1)h\big).
\end{align*}
After solving the right-hand side of the above equation, we get
\begin{eqnarray*}
[ad_A^{i}(X), ad_A^{j}(X)] &=& 2^{i+j-1} a^{\frac{i+j-3}{2}} \big((ab_1-b_2)^2-4ab_3^2 \big) \begin{pmatrix} 0 & 1 \\ a & 0 \end{pmatrix}\\ & =& 2^{i+j-1} a^{\frac{i+j-3}{2}} \big((ab_1-b_2)^2-4ab_3^2 \big)A.
\end{eqnarray*}
The set of determinant values attained by elements in the image of $[ad_A^{i}(X), ad_A^{j}(X)]$ is 
$\{-4^{i+j-1} a^{i+j-2} \big((ab_1-b_2)^2-4ab_3^2 \big)^2 \mid b_1,b_2,b_3\in \F_q\}$. Even for a fixed $a \in \F_q^*$, this set is equal to $\F_q$. Thus, when $i+j$ is odd, all semisimple elements are present in the image of $w$. Also, the determinant value is zero with $[ad_A^{i}(X), ad_A^{j}(X)] \neq 0$, only when $i+j=3$. Hence, $w(\Sl_2(q))$ contains nilpotents if only if $i+j=3$.
\end{proof}

\begin{proposition}\label{prop: w_n image}
Let us consider the Lie polynomial $w_n$ where $i_r+j_r$ is odd for each $r \in \{1,2, \dots, n\}$, $\sigma = \sum_{r=1}^{n}(i_r+j_r) \geq 3n$, $i-j$ is positive and even. Then $w_n(\Sl_2(q))$ contains only semisimple elements with determinant equal to 
$$\varepsilon \ 4^{j-1+\sigma}  a^{j-2n-1+\sigma}(1-2^{i-j}a^{\frac{i-j}{2}})^2 b^{2n+1}$$
where $a,b \in \F_q$,  $\varepsilon = (-1)^{n}$ if $i$ and $j$ both are even; and $\varepsilon = (-1)^{n+1}$ if $i$ and $j$ both are odd.
\end{proposition}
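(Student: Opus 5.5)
We would follow the proof of Proposition~\ref{commutator of Engel words}. By Remark~\ref{one variable over representatives} it is enough to take $A=e+af$ with $a\in\F_q$ and $X=b_1e+b_2f+b_3h$ arbitrary, and then to read off which determinants occur. Put $Y:=ad_A(X)$. Two identities carry the whole computation:
\[
A^2=a I, \qquad AY=-YA .
\]
The second holds because $AY+YA=A^2X-XA^2=0$, since $A^2$ is scalar. Hence $Y^2=-\det(Y)I=\beta I$ with $\beta=(b_2-ab_1)^2-4ab_3^2$. This $\beta$ is the $b$ of the statement, since $b$ enters only through this scalar. We also get $(AY)^2=-A^2Y^2=-a\beta I$. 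Lemma~\ref{higher ad maps} then writes every Engel word as $ad_A^m(X)=2^{m-1}A^{m-1}Y$, and $A^{m-1}$ equals $a^{(m-1)/2}I$ or $a^{(m-2)/2}A$ according to parity.

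Next we would compute each building block in closed form. From the proof of Proposition~\ref{commutator of Engel words}, for $i_r+j_r$ odd,
\[
[ad_A^{i_r}(X),ad_A^{j_r}(X)]=\pm\, 2^{i_r+j_r-1}a^{\frac{i_r+j_r-3}{2}}\beta\, A=:c_rA .
\]
The sign depends on which index is even; in the end it only enters squared, so the order does not matter. The first term $ad_A^i(X)-ad_A^j(X)$, with $i-j$ even and positive, is a scalar multiple of $Y$ if $i,j$ are odd and of $AY$ if $i,j$ are even. Factoring out the lower power gives the scalar $2^{j-1}a^{\lfloor (j-1)/2\rfloor}(2^{i-j}a^{\frac{i-j}{2}}-1)$.

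Then we would run an induction on $n$ using
\[
[Y,A]=2YA=-2AY, \qquad [AY,A]=2AYA=-2aY .
\]
So each bracket with $c_rA$ switches the current term between the lines $\F_qY$ and $\F_qAY$, multiplying it by $\mp 2c_r$ or $\mp 2ac_r$. Consequently $w_n=\lambda Z$, where $Z\in\{Y,AY\}$ is determined by the parities of $n$ and of $j$. The scalar $\lambda$ is the product of the first-term scalar, the factors $2c_r$, and a power of $a$ collected from the $AY\mapsto Y$ steps. Then $\det w_n=\lambda^2\det Z$, with $\det Y=-\beta$ and $\det(AY)=a\beta$. This produces exactly $\beta^{2n}\cdot\beta=\beta^{2n+1}$, the factor $(1-2^{i-j}a^{\frac{i-j}{2}})^2$, and $\prod 4^{i_r+j_r}$, which gives $4^{\sigma}$. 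What is left is bookkeeping of the powers of $a$ and $2$ and of the sign $\varepsilon$: the sign comes from $\det Z$ and from the alternation between $Y$ and $AY$, and this is why $\varepsilon$ differs by $(-1)$ between the even case and the odd case.

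Finally we would show that no nonzero nilpotent occurs. If $a=0$, the hypothesis $\sigma\ge 3n$ makes every exponent $\frac{i_r+j_r-3}{2}$ nonnegative, and the remaining total power of $a$ in $\lambda$ is positive, so $w_n=0$. I expect this is where the inequality is used; it needs checking. If $a\neq0$, then $w_n$ contains the factor $\beta^n$, so $\beta=0$ forces $w_n=0$. If $\beta\ne0$, then $\det w_n$ is a nonzero multiple of $\beta^{2n+1}$ unless $\lambda=0$. Hence every nonzero value of $w_n$ has nonzero determinant, i.e.\ is semisimple, and since the orbits are classified by the determinant (Table~\ref{tab:orbits}), the image is as stated. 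The main obstacle is the exact exponent of $a$ and the sign $\varepsilon$. I would first settle $n=1$ in both parity cases by a direct computation and then check that each inductive step multiplies the determinant by exactly $-4^{i_r+j_r}a^{i_r+j_r-2}\beta^2$ once the $Y$/$AY$ switch is taken into account, which gives the $(-1)^n$ in $\varepsilon$ and the exponent $j-2n-1+\sigma$ of $a$.
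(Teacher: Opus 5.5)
Your route is the paper's: normalize $A=e+af$, put $Y=ad_A(X)$, use $A^2=aI$ and $AY=-YA$, write each $[ad_A^{i_r}(X),ad_A^{j_r}(X)]$ as $c_rA$, and track whether the running term lies on $\F_qY$ or on $\F_qAY$. Your determinant bookkeeping is correct. Each bracket with $c_rA$ multiplies the determinant by $-4^{i_r+j_r}a^{i_r+j_r-2}\beta^2$, in either direction. The starting value, with $\kappa_1=2^{i-j}a^{\frac{i-j}{2}}-1$, is $\mp4^{j-1}a^{j-1}\kappa_1^2\beta$. Together these give exactly $\varepsilon\,4^{j-1+\sigma}a^{j-2n-1+\sigma}\kappa_1^2\beta^{2n+1}$. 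Your argument for $a\neq0$ (the factor $\beta^n$ and invertibility of $A$) is also fine.

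\textbf{The gap.} The gap is the step you flagged at $a=0$, and it cannot be closed. Take $i,j$ odd and $n=1$. The only bracket is a $Y\mapsto AY$ step, which contributes no factor of $a$. The power of $a$ in $\lambda$ is then $\frac{j-1}{2}+\frac{i_1+j_1-3}{2}$, and this vanishes when $j=1$ and $i_1+j_1=3$. At $a=0$ you then have $\kappa_1=-1$, $\lambda=\pm8\beta$, $\beta=b_2^2$, and $AY=-b_2e$, which is nonzero for $b_2\neq0$. Concretely, take $i=3$, $j=1$, $(i_1,j_1)=(2,1)$, $A=e$, $X=f$. Since $ad_e(f)=h$, $ad_e^2(f)=-2e$ and $ad_e^3(f)=0$,
\[
w_1(e,f)=\bigl[-h,\,[-2e,h]\bigr]=[-h,4e]=-8e .
\]
This is a nonzero nilpotent, although every hypothesis holds. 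So the statement itself is false in this configuration. The inequality $\sigma\ge 3n$ cannot rescue it, because it is automatic: each $i_r+j_r$ is odd and hence at least $3$.

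The paper's proof has the same hole. It asserts that $\det w_n(A,X)=0$ forces $w_n(A,X)=0$, which is justified only when $A$ is invertible, i.e.\ $a\neq0$. Outside this one configuration your $a=0$ argument does work:
\begin{itemize}
\item for $i,j$ even, the first bracket is an $AY\mapsto Y$ step and contributes a factor $a$;
\item for $i,j$ odd with $n\ge2$, the second bracket does;
\item for $n=1$ with $i,j$ odd, you need $j+i_1+j_1>4$.
\end{itemize}
A correct version of the statement must therefore exclude the case $n=1$, $i,j$ odd, $j=1$, $i_1+j_1=3$.

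There is also a smaller omission. Observing that $b$ enters only through $\beta$ shows only that every determinant has the stated form. The paper also needs the converse, that every such value occurs, when it later counts missing orbits. For that you must show that, for fixed $a\neq0$, $\beta=(ab_1-b_2)^2-4ab_3^2$ takes every value in $\F_q$. This holds because $u^2-4av^2$ is a nondegenerate binary quadratic form over a finite field, and the paper proves it directly.
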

\begin{proof}
We let $A = e +af$, and $X = b_1e +b_2f +b_3h$, for some $a, b_1, b_2, b_3 \in \mathbb F_q$. Without loss of generality, we let $i_r$ be even, and $j_r$ be odd. By Lemma~\ref{higher ad maps} and Proposition~\ref{commutator of Engel words}, we have
\begin{align*}
&w_n(A,X)\\
&= \Big[\cdots\big[2^{i-1}A^{i-1} ad_A(X)-2^{j-1} A^{j-1} ad_A(X),\ 2^{i_1+j_1-1} a^{\frac{i_1+j_1-3}{2}} \big((ab_1-b_2)^2-4ab_3^2 \big)A\big], \dots \\
& \dots,2^{i_n+j_n-1} a^{\frac{i_n+j_n-3}{2}} \big((ab_1-b_2)^2-4ab_3^2 \big)A \Big]\\
&= 2^{j+\sum_r (i_r+j_r) -n -1} a^{\sum_r \frac{i_r+j_r-3}{2}}\big((ab_1-b_2)^2-4ab_3^2 \big)^n\Big[\dots\big[[2^{i-j}A^{i-1}ad_A(X)  - A^{j-1}ad_A(X), \\& \ A],\ A\big], \dots, A\Big]\\
&= \kappa a^{\sum_r \frac{i_r+j_r-3}{2}}\big((ab_1-b_2)^2-4ab_3^2 \big)^n\Big[\dots\big[[2^{i-j}A^{i-1}ad_A(X)  - A^{j-1}ad_A(X), \ A],\ A\big], \dots, A\Big]
\end{align*}
where $\kappa=2^{j+\sigma -n -1}$, and $\sigma=\sum_r (i_r+j_r)$.

{\bf Case 1:} Let $i$ and $j$ be both even. Then, for $A = e+a f$, $A^{i-1}= a^{\frac{i-2}{2}}A$ and $A^{j-1}= a^{\frac{j-2}{2}}A$. Substituting it in the last equation, we get
\begin{align*}
&w_n(A,X)\\
&= \kappa a^{\frac{j-2 + \sum_r(i_r+j_r-3)}{2}} \big((ab_1-b_2)^2-4ab_3^2 \big)^n [\dots[[A\big(2^{i-j}a^{\frac{i-j}{2}}-1\big) ad_A(X),\  A],\ A], \dots, A]\\
&= \kappa \kappa_1 a^{\frac{j-2 + \sum_r(i_r+j_r-3)}{2}}\big((ab_1-b_2)^2-4ab_3^2 \big)^n[\dots[[A ad_A(X),\  A],\ A], \dots, A]
\end{align*}
where $\kappa_1=\big(2^{i-j}a^{\frac{i-j}{2}}-1\big)$. 
Note that
\begin{align*}
[\dots[[A ad_A(X),\   A],\ A], \dots, A]
&=A[\dots[[ad_A(X),\   A],\ A], \dots, A]\\
&= A (-1)^n ad_A^{n+1}(X) =(-1)^n2^{n}A^{n+1}ad_A(X).
\end{align*}
Thus, 
\begin{align*}
w_n(A,X) &=\kappa \kappa_1 2^n a^{\frac{j-2 + \sum_r(i_r+j_r-3)}{2}} \big((ab_1-b_2)^2-4ab_3^2 \big)^n (-1)^n2^{n}A^{n+1}ad_A(X)\\
&=(-1)^n\kappa_2\kappa_1 a^{\frac{j-2 + \sigma -3n}{2}}\big((ab_1-b_2)^2-4ab_3^2 \big)^n A^{n+1} \begin{pmatrix}
    b_2-ab_1 &-2b_3 \\ 2ab_3 &-b_2+ab_1
\end{pmatrix}\end{align*}
where $\kappa_2=\kappa 2^n$. This gives us,
\begin{align*}
\det(w_n(A,X)) &=\kappa_2^2\kappa_1^2 a^{j-2 + \sigma -3n} \big((ab_1-b_2)^2-4ab_3^2 \big)^{2n} (-a)^{n+1}(-(ab_1-b_2)^2+4ab_3^2)\\
&=(-1)^{n}\kappa_2^2\kappa_1^2 a^{j -2n-1 + \sigma } \big((ab_1-b_2)^2-4ab_3^2 \big)^{2n+1}.\end{align*}

{\bf Case 2:} If $i$ and $j$ are both odd, then for $A = e+a f$, $A^{i-1}= a^{\frac{i-1}{2}}I$ and $A^{j-1}= a^{\frac{j-1}{2}}I$. In this case, we get
\begin{align*}
w_n(A,X) & = \kappa\kappa_1 a^{\frac{j-1 + \sum_r(i_r+j_r-3)}{2}} \big((ab_1-b_2)^2-4ab_3^2 \big)^n[\dots[[ ad_A(X),\   A],\ A], \dots, A]\\
&= \kappa\kappa_1 a^{\frac{j-1 + \sum_r(i_r+j_r-3)}{2}} \big((ab_1-b_2)^2-4ab_3^2 \big)^n \big((-1)^n 2^n ad_A^{n+1}(X)\big)\\
&=(-1)^n\kappa_2\kappa_1 a^{\frac{j-1 + \sigma -3n}{2}} \big((ab_1-b_2)^2-4ab_3^2 \big)^n A^{n}ad_A(X)
\end{align*}
Hence, 
\begin{align*}
\det(w_n(A, X))& = \kappa_2^2\kappa_1^2 a^{j-1 + \sigma -3n} \big((ab_1-b_2)^2-4ab_3^2 \big)^{2n}(-a)^n\big(-(ab_1-b_2)^2+4ab_3^2 \big)\\
&=(-1)^{n+1}\kappa_2^2\kappa_1^2 a^{j-2n-1 + \sigma} \big((ab_1-b_2)^2-4ab_3^2 \big)^{2n+1}\end{align*} 

In both of the above cases, since $\sigma = \sum_{r=1}^{n}(i_r+j_r) \geq 3n$, $\det(w_n(A, X)) = 0$ if and only if $w_n(A, X)=0$, and hence $w_n(\Sl_2(q)$ contains no nilpotent elements. We have, for each $r$, the sum $i_r+j_r$ is odd and hence $\geq 3$. So, for $j>0$, $j - 2n -1 + \sigma > 0$. Therefore, if $a=0$, the expression $\varepsilon \kappa^2 \kappa_1^2 4^{j-1+\sum_{r}(i_r+j_r)} a^{j-2n-1+\sum_{r}(i_r+j_r)}(2^{i-j}a^{\frac{i-j}{2}}-1)^2 \big((ab_1-b_2)^2-4ab_3^2 \big)^{2n+1}$ is equal to $0$. Further, we claim that for any $a \in \mathbb{F}_q^*$, the polynomial $(ax_1 - x_2)^2 - 4ax_3^2$ is surjective on $\mathbb F_q$. For this, we first assume that $-a = c^2$ for some $c \in \mathbb F_q$. In this case, $(ax_1 - x_2)^2 - 4ax_3^2 = (ax_1-x_2)^2 + (2cx_3)^2$, which takes all values in $\mathbb F_q$ that can be written as a sum of two squares and hence it is surjective on $\mathbb F_q$. We now assume that $-a$ is not a square in $\mathbb F_q$. For $x_1 =0$, the polynomial $(ax_1-x_2)^2-4ax_3^2$ becomes $x_2^2 + (-a)(2x_3)^2 $ which takes all square values in $\mathbb F_q$ at $x_3 =0$ and all non-squares values in $\mathbb F_q$ at $x_2 =0$. Thus, for any $a \in {\mathbb{F}_q}^*$, the polynomial $(ax_1 - x_2)^2-4ax_3^2$ is surjective on $\mathbb F_q$. Therefore, in the expression of $\det(A, X)$, we can replace $(ab_1 - b_2)^2-4ab_3^2$ by $b$ and vary it over $\mathbb F_q$. This completes the proof.
\end{proof}

The polynomial $f(x, y)= \pm \ 4^{j-1 + \sum_{r}(i_r+j_r)} x^{j-2n-1+\sum_{r}(i_r+j_r)}(1-2^{i-j}x^{\frac{i-j}{2}})^2 y^{2n+1}$ need not be always surjective on $\mathbb F_q$. Thus, for a suitable choice of parameters $i, j, i_1, j_1\dots, i_r, j_r$ and $n$, the Lie polynomial $w_n$ defined earlier could have an image that does not contain all split or anisotropic semisimple elements.

The following is an easy computation that we include for completeness. 
\begin{lemma}
Let $\mathbb{F}_q$ be a finite field of odd characteristic and let $S = \{x^2 \mid x \in \mathbb{F}_q\}$ be the set of quadratic residues in $\mathbb{F}_q$. For any non-zero element $a \in \mathbb{F}_q$, the number of ordered pairs $(u, v) \in S \times S$ such that $u + v = a$, denoted by $N_{S}(a)$, is given by:
$$
N_{S}(a) = \frac{q - \eta(-1) + 2\eta(a) + 2}{4}
$$
where $\eta$ is the quadratic character (Legendre symbol) on $\mathbb{F}_q$.
\end{lemma}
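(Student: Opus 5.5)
The plan is to count solutions of $x^2+y^2=a$ in $\mathbb F_q^2$ and then pass from pairs $(x,y)$ to pairs of squares $(u,v)=(x^2,y^2)$ by correcting for the fibres of the squaring map. Here $S$ contains $0$, so the fibres have different sizes at $0$ and away from $0$.

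\emph{Step 1: count $x^2+y^2=a$.} The number of $x\in\mathbb F_q$ with $x^2=s$ is $1+\eta(s)$, with the convention $\eta(0)=0$. Hence
\begin{align*}
\#\{(x,y)\in\mathbb F_q^2 \mid x^2+y^2=a\} &= \sum_{s+t=a}(1+\eta(s))(1+\eta(t)) \\
&= q+\sum_{s}\eta(s)+\sum_t\eta(t)+\sum_{s\in\mathbb F_q}\eta\big(s(a-s)\big).
\end{align*}
The two linear character sums vanish. For the last sum, write $s(a-s)=-(s^2-as)$. The standard evaluation $\sum_s\eta(s^2+bs+c)=-1$ for a monic quadratic with nonzero discriminant applies, since here the discriminant is $a^2\neq 0$. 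This gives $\sum_s\eta(s(a-s))=-\eta(-1)$, so the count equals $q-\eta(-1)$. This character-sum evaluation is the only non-formal input. If needed, I would justify it by completing the square and using that $\#\{(s,z)\mid z^2=s^2-c\}=q-1$ for $c\neq0$, which follows from the factorisation $(s-z)(s+z)=c$. I expect this to be the main, though mild, obstacle.

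\emph{Step 2: fibre correction.} Each pair $(u,v)\in S\times S$ with $u+v=a$ has $(1+\eta(u))(1+\eta(v))$ preimages under $(x,y)\mapsto(x^2,y^2)$. This is $4$ if $u,v\neq0$, and $2$ if exactly one of them is $0$. Both cannot be $0$, because $a\neq0$.

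A pair with $u=0$ exists exactly when $v=a$ is a nonzero square, that is, when $\eta(a)=1$; the same holds for $v=0$. So the number of such boundary pairs is $2\cdot\frac{1+\eta(a)}{2}=1+\eta(a)$. Let $N'$ be the number of pairs with both entries nonzero. Then
\[
q-\eta(-1)=4N'+2(1+\eta(a)).
\]

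\emph{Step 3: solve.} From Step 2, $N_S(a)=N'+(1+\eta(a))$. Substituting $N'=\frac{q-\eta(-1)-2-2\eta(a)}{4}$ gives
\[
N_S(a)=\frac{q-\eta(-1)-2-2\eta(a)+4+4\eta(a)}{4}=\frac{q-\eta(-1)+2\eta(a)+2}{4},
\]
which is the claimed formula.
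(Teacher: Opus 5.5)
Your proposal is correct and follows the same route as the paper: you count solutions of $x^2+y^2=a$ as $q-\eta(-1)$, then split the pairs $(u,v)\in S\times S$ with $u+v=a$ into boundary pairs (there are $1+\eta(a)$ of them, with two preimages each) and interior pairs (four preimages each), and solve for $N_S(a)$. The only difference is that you derive the count $q-\eta(-1)$ from the character sum $\sum_s\eta(s(a-s))=-\eta(-1)$, whereas the paper states that count without proof.
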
\label{N_s(a) count}
\begin{proof}
The number of solutions for the equation $x^2 + y^2 = a$ in $\mathbb{F}_q^2$ is $N(a) = q - \eta(-1)$. We partition these solutions as follows:

\textbf{Case 1.} Boundary Pairs (where $u=0$ or $v=0$):
The pair $(0, a)$ is a solution in $S \times S$ if and only if $a \in S$ (i.e., $\eta(a) = 1$). 
\begin{itemize}
    \item If $\eta(a) = 1$, then $(0, a)$ and $(a, 0)$ are two square pairs which correspond to four solutions $(0, \pm a)$ and $(\pm a, 0)$.
    \item If $\eta(a) = -1$, no such square pairs exist.
\end{itemize}
Thus, there are $1 + \eta(a)$ boundary square pairs that corresponds to $2(1 + \eta(a))$ solutions.

\textbf{Case 2.} Internal Pairs (where $u, v \neq 0$):
The number of solutions $(x, y)$ with $x, y \neq 0$ is:
\[
N_{nz} = N(a) - 2(1 + \eta(a)) = q - \eta(-1) - 2 - 2\eta(a)
\]
Each internal square pair $(u, v)$ corresponds to exactly four solutions $(\pm\sqrt{u}, \pm\sqrt{v})$. Therefore:
\[
\text{Internal Pairs} = \frac{q - \eta(-1) - 2 - 2\eta(a)}{4}
\]

Thus, the number of distinct pairs is
$$N_S(a) = (1 + \eta(a)) + \frac{q - \eta(-1) - 2 - 2\eta(a)}{4} = \frac{q - \eta(-1) + 2\eta(a) + 2}{4}.$$
\end{proof}

\begin{lemma}\label{lem: s_alpha values for x^2(1-4x)^2}
Let $f\colon \mathbb F_q \rightarrow \mathbb F_q$ be a map defined as $f(x) = x^2(1-4x)^2$. For any $\alpha$ a non-negative integer, let $s_\alpha = |\{a \in \mathbb F_q \mid |f^{-1}(a)|= \alpha\}|$. Then $s_\alpha =0$, for all $\alpha > 4$, and the values of $s_\alpha$ for $\alpha \in \{0,1,2,3,4\}$ is given in the following table.
\begin{table}[h]
\parbox{.7\linewidth}{
\centering
 \begin{tabular}{|c|c|c|c|c|c|}
 \hline
 $\alpha$ &$0$&$1$&$2$&$3$&$4$\\ \hline
 $s_\alpha$&$\frac{q+2\eta(2)-\eta(-1)-2}{8}$ &$\frac{1-\eta(2)}{2}$& $\frac{q+2+\eta(-1)}{4}$& $\frac{1+\eta(2)}{2}$&$\frac{ q - \eta(-1) - 2\eta(2) -6}{8}$ \\
 \hline
  \end{tabular}
\caption{$s_\alpha$ values}
    \label{tab:polynomial values for x^2(1-4x)^2}
}
\end{table}
\end{lemma}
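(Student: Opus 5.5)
The plan is to reduce everything to quadratic equations by factoring $f$ through $g(x)=x(1-4x)$, so that $f=g^2$. For $c\in\mathbb F_q$ we have
\[
f^{-1}(c)=\{x \mid g(x)=t,\ t^2=c\}.
\]
This is empty when $c$ is a non-square. For a fixed $t$, the equation $g(x)=t$ is $4x^2-x+t=0$, whose discriminant is $1-16t$. Since $q$ is odd, it has exactly $1+\eta(1-16t)$ solutions (with the convention $\eta(0)=0$). We then split into cases according to $c$.

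\textbf{The special values of $c$.}
\begin{itemize}
\item $c=0$: here $t=0$, giving $x=0,\tfrac14$, so $|f^{-1}(0)|=2$.
\item $c=s^2$ with $s\neq 0$: here $|f^{-1}(c)|=2+\eta(1-16s)+\eta(1+16s)$.
\item $s=\pm\tfrac1{16}$, i.e.\ $c=\tfrac1{256}$: the branch $t=\tfrac1{16}$ gives the double root $x=\tfrac18$, and the branch $t=-\tfrac1{16}$ has discriminant $2$ and gives $1+\eta(2)$ roots. So $|f^{-1}(1/256)|=2+\eta(2)$, which is $3$ or $1$. This single value of $c$ accounts for $s_3=\frac{1+\eta(2)}2$ and $s_1=\frac{1-\eta(2)}2$.
\item Every other nonzero square $c$: both $u=1-16s$ and $v=1+16s$ are nonzero, so the fiber size lies in $\{0,2,4\}$. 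In particular no fiber exceeds $4$, which gives $s_\alpha=0$ for $\alpha>4$.
\end{itemize}

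\textbf{Counting $s_4$.} A nonzero square $c\neq\tfrac1{256}$ has $4$ preimages exactly when $u$ and $v$ are both nonzero squares, where $u+v=2$. The internal square pairs with $u+v=2$ are counted in the proof of Lemma~\ref{N_s(a) count}: there are $\frac{q-\eta(-1)-2-2\eta(2)}{4}$ of them. Two adjustments are needed.
\begin{itemize}
\item The pair $(1,1)$ corresponds to $s=0$, i.e.\ $c=0$, which has already been handled, so it must be removed.
\item The remaining pairs come in orbits $\{(u,v),(v,u)\}$ of size $2$, corresponding to $\pm s$, i.e.\ to the same $c$.
\end{itemize}
Hence
\[
s_4=\tfrac12\Big(\tfrac{q-\eta(-1)-2-2\eta(2)}{4}-1\Big)=\tfrac{q-\eta(-1)-2\eta(2)-6}{8}.
\]

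\textbf{Counting $s_0$ and $s_2$.} Rather than counting the mixed cases for $s_2$ and $s_0$ directly, I would solve the two linear relations
\[
\sum_\alpha s_\alpha=q,\qquad \sum_\alpha \alpha s_\alpha=q .
\]
With $s_1,s_3,s_4$ known, these give $s_0$ and $s_2$. As a check, $s_0$ should equal $\frac{q-1}{2}$ (the non-squares) plus the number of squares with empty fiber; the latter is the number of pairs with both $u,v$ non-squares, which can be counted by a symmetric argument.

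The main obstacle is bookkeeping at the degenerate points: the values $s=0$ and $s=\pm\tfrac1{16}$, where $u$ or $v$ vanishes or the pair $(u,v)$ is fixed by the swap. These must be excluded or treated separately so that the division by $2$ (for the identification $\pm s$) and the use of $N_S(2)$ are exact. I would verify the final formulas numerically for $q=3,5,7,9$ to confirm the constants.
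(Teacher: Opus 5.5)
Your handling of the nonempty fibres is correct and follows the paper's route. This covers factoring $f=g^2$, the fibre-size formula $2+\eta(1-16s)+\eta(1+16s)$, and isolating $c=\frac{1}{256}$, which gives $s_1$ and $s_3$. It also covers counting $s_4$ via pairs with $u+v=2$ from the proof of the lemma computing $N_S(a)$. The paper starts from $N_S(2)$, boundary pairs included, and then subtracts $\frac{1+\eta(2)}{2}$ for $c=\frac{1}{256}$. Working with internal pairs from the outset, as you do, gives the same count and is slightly cleaner. The relation $\sum_\alpha \alpha s_\alpha=q$ then correctly gives $s_2=\frac{q+2+\eta(-1)}{4}$.

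The step that fails is $s_0$. With $s_1,\dots,s_4$ as above, the relation $\sum_\alpha s_\alpha=q$ forces $s_0=q-(s_1+s_2+s_3+s_4)=\frac{5q-6-\eta(-1)+2\eta(2)}{8}$. This exceeds the tabulated value by exactly $\frac{q-1}{2}$. For instance, for $q=5$ one has $f(x)=(x(x+1))^2$, with values $0,4,1,4,0$ at $x=0,\dots,4$. So $s_0=2$ (the non-squares $2,3$), whereas the table gives $0$.

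Your own ``check'' explains the discrepancy. Under the stated definition every non-square has empty fibre, so $s_0\ge\frac{q-1}{2}$. The tabulated $\frac{q+2\eta(2)-\eta(-1)-2}{8}$ is precisely your second summand, the number of nonzero squares with empty fibre. That is what the paper's proof actually computes, as $\frac{q+1}{2}-s_1-s_2-s_3-s_4$. It is also the quantity used later in Theorem~\ref{thm:Semisimple elements of same type missing}.

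So your plan, as written, does not establish the table. You need to point out that its $s_0$ entry is inconsistent with the definition of $s_\alpha$. Then either prove the corrected value $\frac{5q-6-\eta(-1)+2\eta(2)}{8}$, or read $s_0$ as the number of nonzero squares outside the image and replace $q$ by $\frac{q+1}{2}$ in your first linear relation.
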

\begin{proof}
Let $g(x) = x (1-4x)$ and $f(x)=g(x)^2$. Note that $g(x)=g(-x+4^{-1})$. Hence, $g(x)$ is a two-to-one function at all points, except for $x = -x + 4^{-1}$, i.e., $x = 8^{-1}$. Thus, $f(x)$ has at least two preimages for each $x \neq 8^{-1}$. Also, if $a \in \mathbb F_q \backslash\{0,4^{-1}\}$, then $g(a)$ and $-g(a)$ are two distinct preimages for $f(a)$. By these properties of $f$ and $g$, we conclude that if $a \in \mathbb F_q \backslash\{0,4^{-1}\}$ and $f(a) \ne f(8^{-1})$, then $f(a)$ has $4$ preimages if $-g(a) \in \img(g)$, and $f(a)$ has $2$ preimages if $-g(a) \notin \img(g)$. Clearly, $f(x) = 0$ at $x =0$ and $x =4^{-1}$. So, $f(0)$ has precisely two preimages. If $-g(8^{-1}) = g(y)$, for some $y$, then there are three preimages for $f(8^{-1})$; namely $8^{-1}$, $y$ and $-y + 4^{-1}$. If $-g(8^{-1}) \ne g(y)$, for all $y$, then there is a unique preimage for $f(8^{-1})$.

Now, let us check when $-g(8^{-1})$ lies in the image of $g(x)$. If so happens, then there exist $y \in \mathbb F_q$ such that $-g(8^{-1})= g(y) \implies -16^{-1} = y (1-4y) \implies 4y^2 -y -16^{-1}=0$. The last relation holds if and only the discriminant of the quadratic polynomial  $4y^2 -y -16^{-1}$ is a square in $\mathbb F_q$. Hence, such a $y$ exists if and only if $1-4(4)(-16^{-1}) = 2$ is a square in $\mathbb F_q$. So, $-g(8^{-1})$ lies in the image of $g(x)$ if $q \equiv1, 7 \mod 8$ and $-g(8^{-1})$ does not lie in the image of $g(x)$ if $q \equiv 3,5 \mod 8$. Therefore, if $q \equiv 1, 7 \mod 8$, then $s_1 = 0 = \frac{1-\eta(2)}{2}$, $s_3 = 1 = \frac{1+\eta(2)}{2}$ and $s_\alpha=0$, for all $\alpha > 4$, and if $q \equiv 3, 5 \mod 8$, then $s_1 = 1 = \frac{1-\eta(2)}{2}$, $s_3 = 0 = \frac{1+\eta(2)}{2}$ and $s_\alpha =0$, for all $\alpha > 4$.

We now proceed to find $s_4$. As discussed above, $|f^{-1}(a)| = 4$ for some $a \in \mathbb F_q \backslash\{0,4^{-1}\}$ if and only if $f(a) \neq f(8^{-1})$ and $-g(a) \in \img(g)$. We aim to count such $a$ in $\mathbb F_q$. Let $g(a) = k$. If $-k \in \img(g)$, then there exists a solution in $\mathbb F_q$ for the equation $x-4x^2=-k$. Thus, the discriminant of the polynomial $4x^2-x-k$ is a square. In other words, $1-16 k$ is a square in $\mathbb F_q$. Since $g(a)=k$, $k$ also satisfies the equation $x-4x^2=k$ and hence $1+16 k$ is a square in $\mathbb F_q$. Therefore, $k$ and $-k$ are both in $\img(g)$ if and only if $1+16 k$ and $1-16k$ are squares in $\mathbb F_q$. This is equivalent to saying that $(1+16 k) + (1-16 k) =2$ is a sum of two squares. Thus, counting $k \in \mathbb F_q$ such that $k$ and $-k$ are both in $\img(g)$ is equivalent to counting the number of distinct solutions for the equation $u+v =2$, $u,v \in \mathbb F_q$. By Lemma \ref{N_s(a) count}, this count is $N_{S}(2) = \frac{q - \eta(-1) + 2\eta(2) + 2}{4}$. In addition to it, we want $k$ to be non-zero, since $k=0$ corresponds to $f(a)=0$, which has exactly two preimages. Further, the pairs $(k,-k)$ and $(-k, k)$ accounts for double counting if $k \ne 0$. Thus, the total number of ordered non-zero pairs $(k,-k)$ such that both $k$ and $-k$ are in the image are $\frac{N_{S}(2)-1}{2} = \frac{ q - \eta(-1) + 2\eta(2) -2}{8}$. Also, one of these pair is $(8^{-1}, -8^{-1})$ which corresponds for $s_3$ not $s_4$. This happens if and only if $\eta(2) =1$. Thus, $s_4 = \frac{ q - \eta(-1) + 2\eta(2) -2}{8} - \frac{1+\eta(2)}{2} = \frac{ q - \eta(-1) - 2\eta(2) -6}{8}$.

Since $s_\alpha =0$, for all $\alpha> 4$, we have the relation $s_1 + 2s_2 +3 s_3 +4s_4 = q$. By substituting the values of $s_1$, $s_3$ and $s_4$, we get $s_2 = \frac{q+ 2+ \eta(-1)}{4}$. Finally, $s_0$ is the number of squares which are not in the image of $f$, and therefore it is equal to $\frac{q+1}{2} - s_1 - s_2 - s_3 - s_4 = \frac{q + 2\eta(2) -\eta(-1)-2}{8}$.
\end{proof}

The following theorem guarantees the existence of Lie polynomials whose images consist entirely of semisimple elements, though not all such elements may be contained in the image.

\begin{theorem}\label{thm:Semisimple elements of same type missing}
Consider the Lie polynomial 
$$w_n = \Bigg[\dots\Big[ad_A^i(X)-ad_A^j(X), \ [ad_A^{i_1}(X), ad_A^{j_1}(X)]\Big],\dots,\ [ad_A^{i_n}(X), ad_A^{j_n}(X)]\Bigg]$$ where $i_r+j_r$ is odd for each $r \in \{1,2, \dots, n\}$, $\sigma=\sum_{r=1}^{n}(i_r+j_r) \geq 3n$, $i-j$ is positive and even. Then $w_n(\Sl_2(q))$ contains only semisimple elements. Moreover,
\begin{enumerate}
\item Let $q \equiv 3 \mod 8$. If $n=\frac{q-3}{4}$, $j$ is odd, $i=j+2$, and $i_1,j_1, \dots, i_n,j_n$ be such that $j-2n-1+\sigma \equiv 2 \mod (q-1)$, then $w_n(\Sl_2(q))$ misses exactly $\frac{q-3}{4}$ semisimple orbits, half of which are split and half of which are anisotropic.
\item Let $q \equiv 7 \mod 8$. If $n=\frac{q-3}{4}$, $j$ is even, $i=j+2$, and $i_1,j_1, \dots, i_n,j_n$ be such that $j-2n-1+\sigma \equiv 2 \mod (q-1)$, then $w_n(\Sl_2(q))$ misses exactly $\frac{q+1}{4}$ semisimple orbits, half of which are split and half of which are anisotropic.
\end{enumerate}
\end{theorem}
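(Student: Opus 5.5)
The plan is to specialise Proposition~\ref{prop: w_n image} to the given parameters, reduce the set of attained determinants to the image of $f(x)=x^2(1-4x)^2$ from Lemma~\ref{lem: s_alpha values for x^2(1-4x)^2}, and count the missing values. Proposition~\ref{prop: w_n image} already says that $w_n(\Sl_2(q))$ contains no non-zero nilpotents, which is the first assertion. By Table~\ref{tab:orbits}, non-zero semisimple orbits are in bijection with non-zero determinant values. So it suffices to determine the set $\mathcal D$ of determinants attained by $w_n(A,X)$ with $A=e+af$; by Remark~\ref{one variable over representatives} this loses nothing.

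Step 1: simplify the determinant formula. With $i=j+2$ we get $1-2^{i-j}a^{(i-j)/2}=1-4a$. Put $e=j-2n-1+\sigma$. The proof of Proposition~\ref{prop: w_n image} shows $e>0$, and the hypothesis gives $e\equiv 2 \pmod{q-1}$. Hence $a^e=a^2$ for $a\neq 0$, and $a^e=0=a^2$ for $a=0$. Next, $n=\frac{q-3}{4}$ gives $2n+1=\frac{q-1}{2}$, so $b^{2n+1}=\eta(b)\in\{0,\pm1\}$. Here I will use that, for each fixed $a\ne 0$, $b=(ab_1-b_2)^2-4ab_3^2$ ranges over all of $\F_q$ independently of $a$ (shown in the proof of Proposition~\ref{prop: w_n image}). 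The constant $C=4^{j-1+\sigma}$ is a non-zero square. Therefore
\[
\mathcal D=\{0\}\cup\{\pm C f(a)\mid a\in\F_q,\ f(a)\neq 0\},
\]
with the sign $\varepsilon$ absorbed by the $\pm$; the parity of $j$ therefore only affects $\varepsilon$ and plays no further role.

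Step 2: count. Since $q\equiv 3\pmod 4$, $\eta(-1)=-1$, so negation swaps non-zero squares and non-squares. The non-zero values of $f$ are non-zero squares, and multiplying by the square $C$ is a bijection of the squares. Hence $\mathcal D\setminus\{0\}$ consists of $S\cup(-S)$, where $S$ is the set of non-zero squares hit by $f$ (up to the rescaling by $C$). The number of non-zero squares missed is exactly $s_0$ from Lemma~\ref{lem: s_alpha values for x^2(1-4x)^2}, because $0$ is attained. Thus the missing non-zero determinants are $s_0$ squares together with their $s_0$ negatives, for a total of $2s_0$ missing orbits. For $q\equiv 3\pmod 8$ we have $\eta(2)=-1$, so $s_0=\frac{q-3}{8}$ and $2s_0=\frac{q-3}{4}$. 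For $q\equiv 7\pmod 8$ we have $\eta(2)=1$, so $s_0=\frac{q+1}{8}$ and $2s_0=\frac{q+1}{4}$.

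Step 3: split versus anisotropic. An orbit is split exactly when $-\det$ is a non-zero square. The missing determinant set is closed under $d\mapsto -d$, and this map exchanges squares with non-squares because $-1$ is a non-square. Hence exactly half of the missing orbits are split and half are anisotropic.

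The main point of care is Step 1. I must check that $a$ and $b$ really vary independently, so that the attained set is precisely $\{\pm Cf(a)\}$ and not something smaller. I also need to confirm that the value $a=0$ contributes only $0$, which follows from $e>0$.
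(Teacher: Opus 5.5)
Your proposal is correct and follows essentially the same route as the paper. Both arguments specialise the determinant formula of Proposition~\ref{prop: w_n image} with $i=j+2$, $2n+1=\frac{q-1}{2}$ and exponent $\equiv 2 \pmod{q-1}$, so that the attained non-zero determinants are $\pm 4^{j-1+\sigma}f(a)$ with $f(x)=x^2(1-4x)^2$; they then read off the number $s_0$ of missed non-zero squares from the $s_\alpha$ lemma, and use $\eta(-1)=-1$ to divide the $2s_0$ missing orbits evenly between split and anisotropic. You are a bit more explicit than the paper about the case $a=0$ and the independence of $a$ and $b$, while the paper additionally notes that the parity condition on $j$ is exactly what makes the congruence $j-2n-1+\sigma\equiv 2 \pmod{q-1}$ attainable (since $\sigma\equiv n \pmod 2$).
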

\begin{proof}
By Proposition \ref{prop: w_n image}, $w_n(\Sl_2(q))$ contains only semisimple elements with determinant equal to 
$$\varepsilon \ 4^{j-1+\sum_{r}(i_r+j_r)} a^{j-2n-1+\sum_{r}(i_r+j_r)}(1-2^{i-j}a^{\frac{i-j}{2}})^2 b^{2n+1}$$
where $a,b \in \F_q$,  $\varepsilon = (-1)^{n}$; if $i$ and $j$ are even and $\varepsilon = (-1)^{n+1}$; if $i$ and $j$ are odd. 

If $q \equiv 3 \mod 4$, then $\frac{q-3}{4}$ is an integer. So, we can take $n=\frac{q-3}{4}$ which in turn implies that $2n+1 = \frac{q-1}{2}$. If $b \ne 0$, then $b^{\frac{q-1}{2}}$ is either $1$ or $-1$. Thus, as $b$ varies over $\mathbb F_q$, $b^{\frac{q-1}{2}}$ takes values $0$, $1$ and $-1$. Further, if $q \equiv 3 \mod 8$, then $n=\frac{q-3}{4}$ is even and hence the sum $\sum_{r=1}^{n}(i_r+j_r)$ is always even because $i_r+j_r$ is odd for all $r$. Thus, there exists $i_1,j_1, \dots, i_n,j_n$ such that $j-2n-1+\sum_{r}(i_r+j_r) \equiv 2 \mod q-1$ if and only if $j$ is odd. If $q \equiv 7 \mod 8$, then $n=\frac{q-3}{4}$ is odd and hence the sum $\sum_{r=1}^{n}(i_r+j_r)$ is always odd. Thus, there exists $i_1,j_1, \dots, i_n,j_n$ such that $j-2n-1+\sum_{r}(i_r+j_r) \equiv 2 \mod q-1$ if and only if $j$ is even. If we substitute $n=\frac{q-3}{4}$, $i-j=2$, and $i_1,j_1, \dots, i_n,j_n$ be such that $j-2n-1+\sum_{r}(i_r+j_r) \equiv 2 \mod q-1$, then the possible determinant values are $\varepsilon \ 4^{\frac{q+1}{2}} a^{2}(1-4a)^2 b^{\frac{q-1}{2}}$. Hence, in this case 
$$w_n(\Sl_2(q)) = \{P \in \Sl_2(q) \mid \det(P)  \ne 0 \text{ and } \det(P)= 4^{\frac{q+1}{2}}c \text{ or } -4^{\frac{q+1}{2}}c,$$ 
where $c\in \img(x^{2}(1-4x)^2)\} \cup \{0\}$. 

Since $-1$ is not a square in $\mathbb F_q$ when $q \equiv 3,7 \mod 8$, $w_n(\Sl_2(q))$ misses the same number of split and anisotropic semisimple orbits. By Lemma \ref{lem: s_alpha values for x^2(1-4x)^2}, the polynomial $x^{2}(1-4x)^2)$ misses $\frac{q-3}{8}$ non-zero square values when $q \equiv 3 \mod 8$, and $\frac{q+1}{8}$ non-zero square values when $q \equiv 7 \mod 8$. Thus, $w_n(\Sl_2(q))$ misses exactly $\frac{q-3}{4}$ semisimple orbits when $q \equiv 3 \mod 8$, and $\frac{q+1}{4}$ semisimple orbits when $q \equiv 7 \mod 8$, half of which are split and half of which are anisotropic. 
\end{proof}

The above theorem gives Lie polynomials whose images over $\Sl_2(q)$, $q \equiv 3 \mod 4$, consist of semisimple elements, but not all semisimple elements are contained in the image. However, the general idea applies to any $q$ for which $q-1$ has a sufficiently large odd divisor, say $d_{q-1}$. For such $q$, we take $n = \frac{d_{q-1}-1}{2}$ and $j$ such that $n+j$ is odd, $i = j+2$, and $i_1,j_1, \dots, i_n,j_n$ be such that each $i_r+j_r$ is odd and $j-2n-1+\sum_{r}(i_r+j_r) \equiv 2 \mod q-1$. With these assumptions, we can obtain Lie polynomials $w_n$ such that  $w_n(\Sl_2(q))$ does not contain all semisimple elements, even for fields $\F_q$, satisfying $q \not\equiv 3 \mod 4$. 
\begin{example}
For example, if $q= 29$, we take $d_{q-1}=7$, $n=3$, $j=2$, $i=4$, and $i_1, j_1, i_2, j_2, i_3, j_3$ be such that each $i_r+j_r$ is odd and $j-2n-1+\sum_{r}(i_r+j_r) = -5 +\sum_{r}(i_r+j_r)\equiv 2 \mod q-1$. With these assumptions, the word $w_n$ has its image over $\Sl_2(q)$ with the possible determinant values to be $\varepsilon \ 4^{8} a^{2}(1-4a)^2 b^{7}$. We use \textsf{GAP} \cite{GAP4.14.0} to check that the polynomial $x^{2}(1-4x)^2 y^{7}$ misses exactly $4$ values over $\mathbb F_{29}$. Hence, $w_n\big(\Sl_2(q)\big)$ misses exactly $4$ semisimple orbits together with the nilpotent.
\end{example}

\section{Images with no nilpotents and few semisimple orbits on $\Sl_2(q)$}\label{image4}

In this section, we aim to obtain Lie polynomials that give a single conjugacy class, with $0$ as its image. We consider the following Lie polynomials:
$$w_{1,n}:= \bigg[\dots\Big[ad_A^{\alpha_1}(X)-ad_A^{\beta_1}(X), \ [ad_A^{i_1}(X), ad_A^{j_1}(X)]\Big],\dots,\ [ad_A^{i_n}(X), ad_A^{j_n}(X)]\bigg]$$ 
and inductively define, for $m\geq 2$,
$$w_{m,n}:= [w_{m-1,n}, ad_A^{\alpha_{m}}(X) - ad_A^{\beta_{m}}(X)].$$ 
Note that the Lie polynomial $w_{1,n}=w_n$ was dealt with in the previous section.

\begin{lemma}\label{prop: few semisimple orbits w_m,n}
Consider the Lie polynomial $w_{m,n}$ for $m\geq 2$ with (i) $i_r+j_r$ is odd for all $r$, (ii) $\alpha_m-\beta_m$ is positive for all $m$ and even, (iii) $\beta_1 + \beta_2 +n$ is odd, and (iv) $\sum_s\beta_s + \sigma -2n-m-1>0$ with $\sigma=\sum_r(i_r+j_r)$. 
Then, we have the following:
\begin{enumerate}
\item If $\beta_3,\beta_4,\dots, \beta_{2m_1}$ are even for some $m_1 \leq [\frac{m}{2}]$, and $\beta_{2m_1 +1}, \beta_{2m_1 + 2}, \dots \beta_m$ are odd, then $w_{m,n}(\Sl_2(q))$ contains only semisimple elements with determinant equal to 
$$(-1)^{m+1} 4^{\sum_s\beta_s+\sigma -1} a^{\sum\beta_s + \sigma-2n-m}\big(2^{\alpha_1-\beta_1}a^{\frac{\alpha_1-\beta_1}{2}}-1\big)^2\cdots \big(2^{\alpha_m-\beta_m}a^{\frac{\alpha_m-\beta_m}{2}}-1\big)^2 b^{2n+m}$$
where $a,b \in \F_q$.
\item If $\beta_3,\beta_4,\dots, \beta_{m-1}$ and $m$ are odd, and $\beta_{m}$ is even, then $w_{m,n}(\Sl_2(q))$ contains only semisimple elements with determinant equal to 
$$(-4^{\sum_s\beta_s+\sigma -1}a^{\sum\beta_s + \sigma -2n-m}\big(2^{\alpha_1-\beta_1}a^{\frac{\alpha_1-\beta_1}{2}}-1\big)^2\cdots \big(2^{\alpha_m-\beta_m} a^{\frac{\alpha_m-\beta_m}{2}}-1\big)^2 b^{2n+m}$$
where $a,b \in \F_q$.
\end{enumerate}
\end{lemma}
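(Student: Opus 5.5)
We follow the proof of Proposition~\ref{prop: w_n image}. Restrict to $A=e+af$ and $X=b_1e+b_2f+b_3h$, as permitted by Remark~\ref{one variable over representatives}, and write $b=(ab_1-b_2)^2-4ab_3^2$ and $Y=ad_A(X)$. Lemma~\ref{higher ad maps} gives $ad_A^{k}(X)=2^{k-1}A^{k-1}Y$. Since $A^2=a$, this yields for each $s$
$$ad_A^{\alpha_s}(X)-ad_A^{\beta_s}(X)=2^{\beta_s-1}a^{\lfloor(\beta_s-1)/2\rfloor}\big(2^{\alpha_s-\beta_s}a^{\frac{\alpha_s-\beta_s}{2}}-1\big)\,A^{\epsilon_s}Y,$$
where $\epsilon_s\in\{0,1\}$ is the parity of $\beta_s-1$. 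Proposition~\ref{commutator of Engel words} shows that each inner bracket $[ad_A^{i_r}(X),ad_A^{j_r}(X)]$ is a scalar multiple of $A$, with scalar $2^{i_r+j_r-1}a^{\frac{i_r+j_r-3}{2}}b$. The proof of Proposition~\ref{prop: w_n image} then writes $w_{1,n}$ as an explicit scalar times $A^{n+\epsilon'}Y$, with $\epsilon'$ determined by the parity of $\beta_1$. For $w_{1,n}$ this means $w_{1,n}=c\,A^{p}Y$, where $c$ collects $2$'s, a power of $a$, $\kappa_1=(2^{\alpha_1-\beta_1}a^{\frac{\alpha_1-\beta_1}{2}}-1)$, $b^n$ and a sign $(-1)^n$.

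The key step is to understand brackets of the form $[A^{p}Y,A^{\epsilon}Y]$. When $p+\epsilon$ is even, one of the two entries is a scalar multiple of the other (because $A^2=a$ is scalar), so the bracket vanishes. When $p+\epsilon$ is odd, the bracket is a scalar times $[AY,Y]$. The computation in Proposition~\ref{commutator of Engel words} gives $[AY,Y]=[A,Y]Y=2bA$, since $AY=-YA$ and $Y^2=-\det Y$. So each step of the induction $w_{s,n}=[w_{s-1,n},\,\cdot\,]$ toggles the shape of the term between ``scalar$\cdot A$'' and ``scalar$\cdot A^{p}Y$''. Bracketing $\lambda A$ with $\mu A^{\epsilon}Y$ gives $\lambda\mu A^{\epsilon}[A,Y]=2\lambda\mu A^{\epsilon+1}Y$. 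Bracketing $\lambda A^{p}Y$ with $\mu A^{\epsilon}Y$, when $p+\epsilon$ is odd, gives a multiple of $bA$.

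I would therefore proceed by induction on $m$, tracking three quantities at each stage: the power of $a$, the power of $b$ (which increases by one per step, ending at $b^{\,n+m}$ times an appropriate extra factor), and the sign. The parity hypotheses do the bookkeeping. Condition (iii), that $\beta_1+\beta_2+n$ is odd, guarantees that the step $w_{2,n}$ is nonzero. In case (1), the even $\beta_3,\dots,\beta_{2m_1}$ followed by odd $\beta$'s keep the alternation consistent. In case (2), the last $\beta_m$ being even handles the final step.

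At the end, $w_{m,n}$ is either $\gamma A$ or $\gamma A^{p}Y$ for an explicit scalar $\gamma$. We have $\det(A)=-a$ and $\det(A^{p}Y)=(-a)^{p}\det Y=(-a)^{p}(-b)$. Squaring $\gamma$ produces the factors $\kappa_s^2$, the power $4^{\sum\beta_s+\sigma-1}$ (after collecting $2$'s) and $b^{2n+m}$ (or $b^{2n+m-1}$ times one more $b$ from $\det Y$). Hypothesis (iv) makes the exponent of $a$ positive, so $\det=0$ forces $w_{m,n}=0$ and the image contains no nilpotents. Finally, the surjectivity of $(ax_1-x_2)^2-4ax_3^2$ for $a\neq0$, shown in the proof of Proposition~\ref{prop: w_n image}, allows us to replace this expression by a free variable $b\in\F_q$.

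The main obstacle is the exact bookkeeping of the exponents of $2$ and $a$ and of the signs through $m$ alternating steps. Each case of the parity pattern must give exactly the stated exponent $\sum\beta_s+\sigma-2n-m$ and the stated sign, $(-1)^{m+1}$ or $-1$. I would first check the base case $m=2$ by hand in both parity patterns. Then I would establish a single inductive formula for the pair (sign, exponent) under the two-step patterns (even, even) and (odd, odd) of consecutive $\beta$'s, and apply it to the patterns in cases (1) and (2).
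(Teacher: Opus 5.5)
Your plan is correct and is essentially the paper's proof. The paper also substitutes $A=e+af$ and pulls the factor $2^{\beta_s-1}\kappa_sA^{\beta_s-1}$, with $\kappa_s=2^{\alpha_s-\beta_s}a^{(\alpha_s-\beta_s)/2}-1$, out of each $ad_A^{\alpha_s}(X)-ad_A^{\beta_s}(X)$. It then evaluates the iterated bracket as a scalar times $A\,ad_A(X)^m$ in case (1), or $ad_A(X)^m$ in case (2), using $A\,ad_A(X)=-ad_A(X)A$; this is your $A\leftrightarrow A^pY$ toggling with $Y^2=b$ left unreduced.

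The sign and exponent bookkeeping you defer closes at once. Non-proportional multiples of $A,Y,AY$ anticommute, and the parity hypotheses keep every step non-proportional, so $\det[u,v]=4\det u\det v$ at each bracket. This gives $\det w_{m,n}=(-1)^{n+\sum_s\beta_s}4^{\sum_s\beta_s+\sigma-1}a^{\sum_s\beta_s+\sigma-2n-m}\prod_s\kappa_s^2\,b^{2n+m}$, and the parity hypotheses turn the sign into $(-1)^{m+1}$ in case (1) and $-1$ in case (2).

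Two small corrections. It is the determinant's $b$-exponent that grows by one per step, reaching $2n+m$, not $n+m$. Also, case (2) requires $m$ odd, so its base case is $m=3$, not $m=2$.
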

\begin{proof}
We let $A = e +af$, and $X = b_1e+b_2f+b_3h$, for some $a, b_1, b_2, b_3 \in \mathbb F_q$. Without loss of generality, we let $i_r$ be even, and $j_r$ be odd. By definition
$$w_{m,n}= \bigg[\dots\Big[[w_n, ad_A^{\alpha_{2}}(X) - ad_A^{\beta_{2}}(X)], \ ad_A^{\alpha_{3}}(X) - ad_A^{\beta_{3}}(X)\Big], \cdots, \ ad_A^{\alpha_{m}}(X) - ad_A^{\beta_{m}}(X)\bigg].$$
We split the proof into two cases.

\textbf{Case 1:} $\beta_1$ is even.
By Lemma \ref{higher ad maps} and Proposition \ref{prop: w_n image},
\begin{align*}
&w_{m,n}(A,X) = \\
&(-1)^n 2^{\beta_1+\sum_r (i_r+j_r) -1}a^{\frac{\beta_1-2 + \sum_r(i_r+j_r)-3n}{2}}\big(2^{\alpha_1-\beta_1}a^{\frac{\alpha_1-\beta_1}{2}}-1\big)\big((ab_1-b_2)^2-4ab_3^2 \big)^n[\dots[[A^{n+1} ad_A(X),\\
&2^{\alpha_2-1}A^{\alpha_2-1} ad_A(X)- 2^{\beta_2-1}A^{\beta_2-1} ad_A(X)],\cdots, 2^{\alpha_m-1}A^{\alpha_m -1} ad_A(X)- 2^{\beta_m-1}A^{\beta_m-1} ad_A(X)].  
\end{align*}
Note that $2^{\alpha_s-1}A^{\alpha_s -1} ad_A(X)- 2^{\beta_s-1}A^{\beta_s-1} ad_A(X)= 2^{\beta_s-1}A^{\beta_s-1}(2^{\alpha_s-\beta_s}A^{\alpha_s-\beta_s}-1)ad_A(X) = (2^{\alpha_s-\beta_s}a^\frac{\alpha_s-\beta_s}{2}-1)2^{\beta_s-1}A^{\beta_s-1}ad_A(X)$. Thus,
\begin{align*}
&w_{m,n}(A,X) = \\
&(-1)^n 2^{\sum_s(\beta_s-1)+\sum_r (i_r+j_r)}a^{\frac{\beta_1-2 + \sum_r(i_r+j_r)-3n}{2}}\big(2^{\alpha_1-\beta_1}a^{\frac{\alpha_1-\beta_1}{2}}-1\big)\dots \big(2^{\alpha_m-\beta_m}a^{\frac{\alpha_m-\beta_m}{2}}-1\big)\\ & \big((ab_1-b_2)^2-4ab_3^2 \big)^n
[\dots[[A^{n+1} ad_A(X),\ A^{\beta_2-1} ad_A(X)],\cdots, A^{\beta_m-1} ad_A(X)].  
\end{align*}

We now compute the $w':=[\dots[[A^{n+1} ad_A(X),\ A^{\beta_2-1} ad_A(X)],\cdots, A^{\beta_m-1} ad_A(X)]$ separately. For simplicity, we let $B:= ad_A(X)$. By our assumption $\beta_1 + \beta_2 + n$ is odd and $\beta_1$ is even. Hence, $\beta_2 +n$ is odd. We first assume that $n$ is odd, then $\beta_2$ is even. Thus, $[A^{n+1} B,\ A^{\beta_2-1} B] = a^{\frac{n+1+\beta_2-2}{2}}[B, A B] =a^{\frac{n+\beta_2-1}{2}}[B, A]B=-a^{\frac{n+\beta_2-1}{2}}ad_A^2(X)B=-2a^{\frac{n+\beta_2-1}{2}}AB^2$. Evidently, $w' = -2a^{\frac{n+\beta_2-1}{2}}[\dots[[AB^2,\ A^{\beta_3-1} B],\cdots, A^{\beta_m-1} B]$. If $n$ is even, then $\beta_2$ is odd. Thus, $[A^{n+1} B,\ A^{\beta_2-1} B] = a^{\frac{n+\beta_2-1}{2}}[AB, B]=2a^{\frac{n+\beta_2-1}{2}}AB^2$. Hence, $w' = 2a^{\frac{n+\beta_2-1}{2}}[\dots[[AB^2,\ A^{\beta_3-1} B],\cdots, A^{\beta_m-1} B]$. Combining the two scenarios, we write $w' = (-1)^{\beta_2}2a^{\frac{n+\beta_2-1}{2}}[\dots[[AB^2,\ A^{\beta_3-1} B],\cdots, A^{\beta_m-1} B]$.\\

We first assume that $\beta_3, \beta_4, \dots, \beta_{2m_1}$ are even and $\beta_{2m_1+1},\dots, \beta_{m}$ are odd, therefore $\beta_3-1, \beta_4-1, \dots, \beta_{2m_1}-1$ are odd, and $\beta_{2m_1+1}-1,\dots, \beta_{m}-1$ are even. Consequently,
$$w'= (-1)^{\beta_2}2a^{\frac{n+\beta_2-1+\beta_3-2+ \dots \beta_{2m_1}-2 + \beta_{2m_1+1}-1 \dots \beta_{m}-1}{2}}[\dots[[AB^2,\ \underbrace{A B],\cdots, AB}_{2m_1-2 \ \text{times}}], \underbrace{B], \cdots, B}_{m-2m_1 \ \text{times}}].$$
Since $ad_A^2(X) = [A, ad_A(X)] = 2Aad_A(X)$, we get $Aad_A(X) = -ad_A(X)A$, and hence $AB^j= (-1)^j B^j A$. By using this and the fact that $A^2$ is a scalar matrix, we have $[\dots[AB^2,\ \underbrace{A B],\cdots, AB}_{2m_1-2 \ \text{times}}] = (-1)^{m_1-1}2^{2m_1-2} A^{2m_1-1}B^{2m_1}= (-1)^{m_1-1}2^{2m_1-2} a^{m_1-1} AB^{2m_1}$. Thus, 
$$w'= (-1)^{\beta_2+m_1-1}2^{2m_1-1}a^{\frac{n+\sum_{s=2}^m\beta_s-1-2(2m_1-2)-(m -{2m_1})+2m_1-2}{2}}[\cdots[AB^{2m_1}, \underbrace{B], \cdots, B}_{m-2m_1 \ \text{times}}].$$
Now, by using $AB^j= (-1)^j B^j A$, we get $[\cdots [AB^{2m_1}, \underbrace{B], \cdots, B}_{m-2m_1 \ \text{times}}] = 2^{m-2m_1}AB^m$. Hence, 
$$w'= (-1)^{\beta_2+m_1-1}2^{m-1}a^{\frac{n+1-m+\sum_{s=2}^m\beta_s}{2}}AB^m.$$
Evidently,
\begin{align*}
&w_{m,n}(A,X) = (-1)^{m_1} 2^{\sum_s\beta_s+\sum_r (i_r+j_r)-1}a^{\frac{\sum\beta_s + \sum_r(i_r+j_r)-2n-m-1}{2}}\big(2^{\alpha_1-\beta_1}a^{\frac{\alpha_1-\beta_1}{2}}-1\big)\dots \\& \dots \big(2^{\alpha_m-\beta_m}a^{\frac{\alpha_m-\beta_m}{2}}-1\big)\big((ab_1-b_2)^2-4ab_3^2 \big)^n A (ad_A(X))^m.  
\end{align*}
Thus,
\begin{align*}
&\det(w_{m,n}(A,X)) = (-1)^{m+1}4^{\sum_s\beta_s+\sum_r (i_r+j_r)-1}a^{\sum\beta_s + \sum_r(i_r+j_r)-2n-m}\big(2^{\alpha_1-\beta_1}a^{\frac{\alpha_1-\beta_1}{2}}-1\big)^2\dots \\& \dots \big(2^{\alpha_m-\beta_m}a^{\frac{\alpha_m-\beta_m}{2}}-1\big)^2\big((ab_1-b_2)^2-4ab_3^2 \big)^{2n+m}.
\end{align*}

We now assume that $\beta_3,\beta_4,\dots, \beta_{m-1}$ and $m$ are odd, and $\beta_{m}$ is even. Then, 
\begin{align*}
 w' &= (-1)^{\beta_2}2a^{\frac{n+\beta_2-1}{2}}[\dots[[AB^2,\ A^{\beta_3-1} B],\cdots, A^{\beta_m-1} B]\\
 &= (-1)^{\beta_2}2a^{\frac{n+\sum_{s=2}^{m-1}(\beta_s-1)+ \beta_m-2}{2}}[\dots[[AB^2,\ B],\cdots, B],\ AB]\\
 &= (-1)^{\beta_2}2a^{\frac{n+\sum_{s=2}^{m-1}(\beta_s-1)+ \beta_m-2}{2}}\big( 2^{m-3}A^2B^{m-1} \big) = (-1)^{\beta_2}2^{m-1}a^{\frac{n-m+2+\sum_{s=2}^{m}\beta_s}{2}} B^{m} \\
\end{align*}
Evidently,
\begin{align*}
w_{m,n}(A,X) = &- 2^{\sum_s\beta_s+\sum_r (i_r+j_r)-1}a^{\frac{\sum\beta_s + \sum_r(i_r+j_r)-2n-m}{2}}\big(2^{\alpha_1-\beta_1}a^{\frac{\alpha_1-\beta_1}{2}}-1\big)\dots \\& \dots \big(2^{\alpha_m-\beta_m}a^{\frac{\alpha_m-\beta_m}{2}}-1\big)\big((ab_1-b_2)^2-4ab_3^2 \big)^n  (ad_A(X))^m.  
\end{align*}
Thus,
\begin{align*}
&\det(w_{m,n}(A,X)) = (-1)^{m}4^{\sum_s\beta_s+\sum_r (i_r+j_r)-1}a^{\sum\beta_s + \sum_r(i_r+j_r)-2n-m}\big(2^{\alpha_1-\beta_1}a^{\frac{\alpha_1-\beta_1}{2}}-1\big)^2\dots \\& \dots\big(2^{\alpha_m-\beta_m}a^{\frac{\alpha_m-\beta_m}{2}}-1\big)^2\big((ab_1-b_2)^2-4ab_3^2 \big)^{2n+m}.
\end{align*}

\textbf{Case 2:} $\beta_1$ is odd.\quad By Lemma \ref{higher ad maps} and Proposition \ref{prop: w_n image}, 
\begin{align*}
&w_{m,n}(A,X) = \\
&(-1)^n 2^{\beta_1+\sum_r (i_r+j_r) -1}a^{\frac{\beta_1-1 + \sum_r(i_r+j_r)-3n}{2}}\big(2^{\alpha_1-\beta_1}a^{\frac{\alpha_1-\beta_1}{2}}-1\big)\big((ab_1-b_2)^2-4ab_3^2 \big)^n[\dots[[A^{n} ad_A(X),\\
&2^{\alpha_2-1}A^{\alpha_2-1} ad_A(X)- 2^{\beta_2-1}A^{\beta_2-1} ad_A(X)],\cdots, 2^{\alpha_m-1}A^{\alpha_m -1} ad_A(X)- 2^{\beta_m-1}A^{\beta_m-1} ad_A(X)].  
\end{align*}
By using $2^{\alpha_s-1}A^{\alpha_s -1} ad_A(X)- 2^{\beta_s-1}A^{\beta_s-1} ad_A(X)=  (2^{\alpha_s-\beta_s}a^\frac{\alpha_s-\beta_s}{2}-1)2^{\beta_s-1}A^{\beta_s-1}ad_A(X)$, we get
\begin{align*}
&w_{m,n}(A,X) = \\
&(-1)^n 2^{\sum_s(\beta_s-1)+\sum_r (i_r+j_r)}a^{\frac{\beta_1-1 + \sum_r(i_r+j_r)-3n}{2}}\big(2^{\alpha_1-\beta_1}a^{\frac{\alpha_1-\beta_1}{2}}-1\big)\dots \big(2^{\alpha_m-\beta_m}a^{\frac{\alpha_m-\beta_m}{2}}-1\big)\\
&\big((ab_1-b_2)^2-4ab_3^2 \big)^n [\dots[[A^{n} ad_A(X),\ A^{\beta_2-1} ad_A(X)],\cdots, A^{\beta_m-1} ad_A(X)].  
\end{align*}
Let $w'':=[\dots[[A^{n} ad_A(X),\ A^{\beta_2-1} ad_A(X)],\cdots, A^{\beta_m-1} ad_A(X)]$. For simplicity, we again let $B:= ad_A(X)$. By our assumption $\beta_1 + \beta_2 + n$ and $\beta_1$ are odd. Hence, $\beta_2 +n$ is even. We proceed similarly as in the above case and get $w'' = (-1)^{\beta_2+1}2a^{\frac{n+\beta_2-2}{2}}[\dots[[AB^2,\ A^{\beta_3-1} B],\cdots, A^{\beta_m-1} B]$. Hence, 
$$w''= (-1)^{\beta_2+m_1}2^{m-1}a^{\frac{n-m+\sum_{s=2}^m\beta_s}{2}}AB^m.$$
Consequently,
\begin{align*}
&w_{m,n}(A,X) = (-1)^{m_1} 2^{\sum_s\beta_s+\sum_r (i_r+j_r)-1}a^{\frac{\sum\beta_s + \sum_r(i_r+j_r)-2n-m-1}{2}}\big(2^{\alpha_1-\beta_1}a^{\frac{\alpha_1-\beta_1}{2}}-1\big)\dots \\
& \dots \big(2^{\alpha_m-\beta_m}a^{\frac{\alpha_m-\beta_m}{2}}-1\big)\big((ab_1-b_2)^2-4ab_3^2 \big)^n A (ad_A(X))^m.  
\end{align*}
Hence,
\begin{align*}
&\det(w_{m,n}(A,X)) = (-1)^{m+1}4^{\sum_s\beta_s+\sum_r (i_r+j_r)-1}a^{\sum\beta_s + \sum_r(i_r+j_r)-2n-m}\big(2^{\alpha_1-\beta_1}a^{\frac{\alpha_1-\beta_1}{2}}-1\big)^2\dots \\
&\dots \big(2^{\alpha_m-\beta_m}a^{\frac{\alpha_m-\beta_m}{2}}-1\big)^2 \big((ab_1-b_2)^2-4ab_3^2 \big)^{2n+m}.
\end{align*}

We proceed similarly when $\beta_3,\beta_4,\dots, \beta_{m-1}$ and $m$ are odd, and $\beta_{m}$ is even. Further, in both the above cases, $\det(w_{m,n}(A,X))= 0$ if and only if $(w_{m,n}(A,X))=0$ and hence $w_{m,n}(\Sl_2(q))$ contains no nilpotent elements. Further, for any $a \in \mathbb{F}_q^*$, the polynomial $(ax_1-x_2)^2-4ax_3^2$ is surjective on $\mathbb F_q$. Therefore, in the expression of $\det(A,X)$, we can replace $(ab_1-b_2)^2-4ab_3^2$ by $b$ and vary it over $\mathbb F_q$. This completes the proof.
\end{proof}

\begin{remark}\label{remark:Few semisimple orbits}
The polynomial 
$$f(x,y):=x^{\sum\beta_s + \sum_r(i_r+j_r)-2n-m}\big(2^{\alpha_1-\beta_1}x^{\frac{\alpha_1-\beta_1}{2}}-1\big)^2\dots \big(2^{\alpha_m-\beta_m}x^{\frac{\alpha_m-\beta_m}{2}}-1\big)^2 y^{2n+m}$$ 
need not be surjective on $\mathbb F_q$ always. Thus, for a suitable choice of parameters, the Lie polynomial $w_{m,n}$ has an image that does not contain all split or anisotropic semisimple elements. For example, if $\alpha_s-\beta_s$ is same for all $s \in \{1,2,\dots, m\}$, then $f(x,y) = x^{\sum\beta_s + \sum_r(i_r+j_r)-2n-m}\big(2^{\alpha_1-\beta_1}x^{\frac{\alpha_1-\beta_1}{2}}-1\big)^{2m} y^{2n+m}$. Further, for $m= q-1$ and $n=\frac{q-1}{2}$, the set of non-zero values of the polynomial $f(x,y)$ is equal to the set of non-zero values of $x^{\sum_{s=1}^m\beta_s + \sum_{r=1}^n(i_r+j_r)}$ over $\F_q$. Since $n+\beta_1 + \beta_2$ is odd and $i_r + j_r$ is odd for each $r$, the sum $\sum_{s=1}^m\beta_s + \sum_{r=1}^n(i_r+j_r)$ is always odd. Moreover, if $c \in \{1,3, \dots, q-2\}$, then $c$ can be written as $\sum_{s=1}^m\beta_s + \sum_{r=1}^n(i_r+j_r)$ modulo $(q-1)$.
\end{remark}

\begin{proposition}\label{Prop: few semisimple orbits for odd gamma}
For $m\geq 2$ and $q$ odd, consider $w_{m,n}$, where $i_r+j_r$ is odd for all $r$; $\alpha_s-\beta_s =c$ where $c$ is either $2$ or $q-1$ for all $s$; $\beta_1 + \beta_2 +n$ is odd; $\beta_3, \beta_4, \dots, \beta_{2m_1}$ are even for some $m_1 \leq [\frac{m}{2}]$; $\beta_{2m_1 +1}, \beta_{2m_1 + 2}, \dots \beta_m$ are odd, and $\sum\beta_s + \sum_r(i_r+j_r)-2n-m-1>0$. Let $\gamma>1$ be any odd divisor of $q-1$. If $\sum_{s=1}^m\beta_s + \sum_{r=1}^n(i_r+j_r) \equiv \gamma \mod (q-1)$, then for any $m$ and $n$ satisfying $2m \equiv 0 \mod{(q-1)}$ and $2n + m \equiv 0 \mod{(q-1)}$, the set $w_{m,n}(\Sl_2(q))$ contains only semisimple elements with determinant equal to 
$$-4^{\gamma-1}a^{\gamma}$$
where $a \in \F_q^*$ if $c=2$, and $a\in {\mathbb F_q^*}^2$ if $c=q-1$. Evidently, if $c=2$, then $w_{m,n}(\Sl_2(q))$ contains only $\frac{q-1}{\gamma}$ semisimple orbits, half of them are split, and half are anisotropic; and if $c = q-1$, then $w_{m,n}(\Sl_2(q))$ contains only $\frac{q-1}{2\gamma}$ semisimple orbits, all of them are anisotropic.
\end{proposition}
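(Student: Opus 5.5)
The plan is to specialize part (1) of Lemma~\ref{prop: few semisimple orbits w_m,n} and then simplify the resulting determinant polynomial using $x^{q-1}=1$ on $\mathbb F_q^*$. The hypotheses of the proposition are exactly those of part (1) of that lemma: parity conditions on the $\beta_s$, $i_r+j_r$ odd, $\beta_1+\beta_2+n$ odd, and the positivity condition. So the lemma applies directly, and $w_{m,n}(\Sl_2(q))$ consists of $0$ together with semisimple elements whose determinants are
$$(-1)^{m+1}4^{N-1}a^{N-2n-m}\bigl(2^{c}a^{c/2}-1\bigr)^{2m}b^{2n+m},\qquad N=\textstyle\sum_s\beta_s+\sigma,$$
with $a,b\in\mathbb F_q$. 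Here I have used that all $\alpha_s-\beta_s$ equal $c$. Since $2m\equiv 0 \pmod{q-1}$, $m$ is a multiple of $\frac{q-1}{2}$; I will take $m$ to be even (or note that $(-1)^{m+1}$ is harmless up to the sign convention in the statement) so that the sign is $-1$.

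Next I will reduce the exponents modulo $q-1$ on nonzero values, as in Remark~\ref{remark:Few semisimple orbits}. Since $2n+m\equiv 0$, we get $b^{2n+m}\in\{0,1\}$, and it equals $1$ exactly when $b\neq 0$. Since $2m\equiv 0$, the factor $(2^c a^{c/2}-1)^{2m}$ is $1$ when $2^ca^{c/2}\neq 1$ and $0$ otherwise. Also $N-2n-m\equiv N\equiv \gamma$, and $4^{N-1}=4^{\gamma-1}$ because $4\in\mathbb F_q^*$. So every nonzero determinant value has the form $-4^{\gamma-1}a^\gamma$, where $a\neq 0$ and $2^ca^{c/2}\ne 1$.

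The main obstacle is identifying exactly which $a$ are allowed and then counting orbits. For $c=2$, the excluded values are $a=\frac14$. At first glance this removes one value of $a^\gamma$, but I expect that because $\gamma$ is odd the value $a^\gamma$ at $a=\frac14$ is also attained by other $a$ whenever $\gamma>1$. More precisely, $a\mapsto a^\gamma$ is $\gamma$-to-one onto the $\gamma$-th powers, so removing one preimage does not remove the value. The set of values is therefore $\{-4^{\gamma-1}t : t\in(\mathbb F_q^*)^\gamma\}$, which has $\frac{q-1}{\gamma}$ elements.

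For $c=q-1$, the factor $2^{q-1}a^{(q-1)/2}-1$ vanishes exactly when $a$ is a square. So the surviving $a$ are the non-squares, and their $\gamma$-th powers are exactly the non-square $\gamma$-th powers; there are $\frac{q-1}{2\gamma}$ of them. Here I must reconcile this with the statement's phrase "$a\in{\mathbb F_q^*}^2$", which is presumably a reparametrization; this should be checked carefully.

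Finally, I will split each orbit set into split and anisotropic orbits using Table~\ref{tab:orbits}: an orbit is split iff $-\det$ is a nonzero square. This means deciding whether $4^{\gamma-1}a^\gamma$ is a square, which holds iff $a$ is a square, since $\gamma$ is odd. For $c=2$, exactly half of the $\gamma$-th powers are squares, because $\gamma$ is odd and so $\frac{q-1}{\gamma}$ is even. This gives half split and half anisotropic. For $c=q-1$, every $a$ is a non-square, so all the orbits are anisotropic. In both cases the absence of nilpotents is inherited from the lemma.
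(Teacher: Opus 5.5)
Your proposal is correct and is essentially the paper's own argument: apply part (1) of the lemma computing $\det(w_{m,n})$, use $2m\equiv 0$, $2n+m\equiv 0$ and $\sum_s\beta_s+\sigma\equiv\gamma \pmod{q-1}$ to reduce the nonzero determinants to $-4^{\gamma-1}a^{\gamma}$ with $a\neq 0$ and $2^{c}a^{c/2}\neq 1$, observe for $c=2$ that discarding $a=\frac14$ loses no value of $a^\gamma$ because $\gamma>1$, restrict to non-squares for $c=q-1$, and count orbits via the $\gamma$-to-one power map. Two small corrections: first, $m$ is not yours to choose even --- it is forced, since $2n+m\equiv 0 \pmod{q-1}$ with $q-1$ even --- and your fallback claim that $(-1)^{m+1}$ is harmless would be false for $c=q-1$ and $q\equiv 3 \pmod 4$, where the sign change would turn the anisotropic orbits into split ones; second, the statement's ``$a\in{\mathbb F_q^*}^2$'' is not a reparametrization but a typo for non-squares, exactly as in the paper's own proof, since squares would give split orbits and contradict the stated conclusion.
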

\begin{proof}
Since $\alpha_s-\beta_s =c$ for all $s$, $\sum_{s=1}^m\beta_s + \sum_{r=1}^n(i_r + j_r) \equiv \gamma \mod (q-1)$, $2m \equiv 0 \mod{(q-1)}$ and $2n + m \equiv 0 \mod{(q-1)}$, the set $w_{m,n}(\Sl_2(q))$, by Lemma~\ref{prop: few semisimple orbits w_m,n}, contains only semisimple elements with determinant of the form $-4^{\gamma-1}a^{\gamma}\big(2^{c}a^{\frac{c}{2}}-1\big)^{2m}$, for some $a \in \F_q^*$.

We first assume that $c=2$. In this case, $2^{c}a^{\frac{c}{2}}-1$ is non-zero if and only if $a \neq 4^{-1}$. Hence, $w_{m,n}(\Sl_2(q))$ contains only semisimple elements with determinant of the form $-4^{\gamma-1}a^{\gamma}$, for some $a \in \F_q \backslash \{0, 4^{-1}\}$. The map $f_{\gamma} \colon \F_q^* \to \F_q^*$, sending $a$ to $a^{\gamma}$ is a $\gamma$-one map. Thus, for $q>3$ and $\gamma>1$, there exists $a' \in \F_q \backslash \{4^{-1}\}$ with $(a')^{\gamma} = {(4^{-1})}^{\gamma}$ and hence the number of semisimple orbits in $w_{m,n}(\Sl_2(q))$ is equal to the number of elements in $\F_q^*$ which can be written as $\gamma$-th powers. Also, the set of $\gamma$-powers in $\F_q^*$ forms a cyclic subgroup of order $\frac{q-1}{\gamma}$, which is an even integer if $q$ is odd. Hence, the number of squares in this subgroup is exactly half of $\frac{q-1}{\gamma}$. Consequently, the set $w_{m,n}(\Sl_2(q))$ contains only $\frac{q-1}{\gamma}$ semisimple orbits, half of them are split, and half are anisotropic.

Now, let $c=q-1$ and $a \in \F_q^*$. In this case, $2^{c}a^{\frac{c}{2}}-1 = a^{\frac{q-1}{2}}-1$ is non-zero if and only if $a$ is a non-square in $\F_q$. Hence, $w_{m,n}(\Sl_2(q))$ contains only semisimple elements with determinant of the form $-4^{\gamma-1}a^{\gamma}$, for some $a$ non-square. Since $\gamma$ is odd, the map $f_{\gamma} \colon \F_q^* \to \F_q^*$, restricts to $\bar{f_{\gamma}} \colon \F_q \backslash (\F_q)^2 \to \F_q \backslash (\F_q)^2$. Also, this map is a $\gamma$-one map and hence its image has size $\frac{q-1}{2\gamma}$. Therefore, the number of semisimple orbits in $w_{m,n}(\Sl_2(q))$ contains only $\frac{q-1}{2\gamma}$ semisimple orbits, all of them are anisotropic.
\end{proof}

\begin{corollary}\label{cor: 4l+3 one split and one anisotropic}
If $q \equiv 3 \mod{4}$, then there exist infinitely many Lie polynomials whose image over $\Sl_2(q)$ contains exactly two orbits, out of which one is a split semisimple orbit and another one is an anisotropic semisimple orbit (and no nilpotents).
\end{corollary}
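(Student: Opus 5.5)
The plan is to apply Proposition~\ref{Prop: few semisimple orbits for odd gamma} with $c=2$ and the largest possible odd divisor $\gamma$ of $q-1$. Since $q\equiv 3 \pmod 4$, we have $q-1=2\gamma_0$ with $\gamma_0=\frac{q-1}{2}$ odd. For $q\ge 7$ this gives $\gamma_0>1$, so $\gamma=\gamma_0$ is admissible. Proposition~\ref{Prop: few semisimple orbits for odd gamma} then says that the image contains exactly $\frac{q-1}{\gamma}=2$ semisimple orbits, one split and one anisotropic. It also says the image contains no nilpotents, since Lemma~\ref{prop: few semisimple orbits w_m,n} gives $\det(w_{m,n}(A,X))=0$ only when $w_{m,n}(A,X)=0$. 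The case $q=3$ would need a separate check: there $\gamma$ cannot exceed $1$, but $\mathbb F_3^*=\{\pm1\}$ has only two semisimple orbits, one split and one anisotropic. So we would instead use the polynomial of Proposition~\ref{prop: all semisimple orbits}, or the $\gamma=1$ analogue of the computation, whose image is all semisimple elements together with $0$, which is exactly these two orbits.

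The main work is to show that the parameter constraints can be satisfied in infinitely many ways. We need $m\ge 2$ with $2m\equiv 0 \pmod{q-1}$, so $m$ is a multiple of $\gamma_0$. We need $2n+m\equiv 0 \pmod{q-1}$. We take $\alpha_s=\beta_s+2$ for all $s$. We take all $i_r+j_r$ odd, and $\beta_1+\beta_2+n$ odd. For $\beta_3,\dots,\beta_m$ we use the parity pattern with $m_1$ of the even ones. Finally we need
\[
\sum_s\beta_s+\sum_r(i_r+j_r)\equiv \gamma_0 \pmod{q-1}.
\]
A concrete choice is $m=q-1$ (even, $\ge 2$) and $n=\frac{q-1}{2}$, which gives $2n+m=2(q-1)$. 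Then $n$ is odd, so we need $\beta_1+\beta_2$ even; we take $\beta_1,\beta_2$ both odd. We take $m_1=1$, so that $\beta_3,\dots,\beta_m$ are all odd. The parity of $\sum_s\beta_s+\sum_r(i_r+j_r)$ is then $m+n\equiv n$, which is odd, consistent with $\gamma_0$ odd. This is the observation of Remark~\ref{remark:Few semisimple orbits}.

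Once the parity matches, we adjust a single $\beta_s$ by even amounts, which preserves all parity constraints, or adjust a pair $(i_r,j_r)$. This moves the total sum through every residue of the correct parity modulo $q-1$, so we can hit $\gamma_0$. We can take all exponents large enough that $\sum\beta_s+\sigma-2n-m-1>0$. Infinitely many polynomials then arise, for instance by replacing any $\beta_s$ by $\beta_s+(q-1)$, together with $\alpha_s$, which changes neither the parities nor the residue. One could also increase $m$ and $n$ by multiples of $q-1$.

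The step I expect to be the main obstacle is making sure that the hypotheses of Proposition~\ref{Prop: few semisimple orbits for odd gamma} are all simultaneously satisfiable: the parity of $\beta_1+\beta_2+n$, the pattern for $\beta_3,\dots,\beta_m$, and the residue condition. A further subtlety is the claim that both orbits are actually attained. With determinant $-4^{\gamma_0-1}a^{\gamma_0}$ and $a\neq 0,4^{-1}$, the values $a^{\gamma_0}$ range over $\{\pm1\}$, and we need both signs attained with some $a\ne 4^{-1}$. For $q\ge 7$ this holds because each fibre of $a\mapsto a^{\gamma_0}$ has $\gamma_0\ge 3$ elements. Since $-1$ is a nonsquare when $q\equiv 3\pmod 4$, the two determinants $\mp4^{\gamma_0-1}$ give exactly one split and one anisotropic orbit, as asserted.
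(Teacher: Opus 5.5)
For $q\ge 7$ your argument is the paper's: apply Proposition~\ref{Prop: few semisimple orbits for odd gamma} with $c=2$ and $\gamma=\frac{q-1}{2}$. The determinants are then $\mp 4^{\gamma-1}$, which give one split and one anisotropic orbit, since $-1$ is a nonsquare. Your explicit parameters ($m=q-1$, $n=\frac{q-1}{2}$, all $\beta_s$ odd, residue adjusted by even shifts) satisfy every hypothesis. They also make the infinitude more explicit than the paper's own proof does.

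The gap is the case $q=3$, where the statement still demands \emph{infinitely many} polynomials. You supply only one, $ad_A^{4}(X)-ad_A^{6}(X)$ from Proposition~\ref{prop: all semisimple orbits}. Its image is indeed $\{0\}$ together with the two semisimple orbits. But your infinitude argument (shifting $\beta_s$ by $q-1$, or enlarging $m,n$) lives entirely inside the $w_{m,n}$ family, which you have set aside for $q=3$.

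Your fallback, the ``$\gamma=1$ analogue,'' fails. With $c=2$ and the constraint $2n+m\equiv 0 \pmod{2}$, the determinant is $-4^{\gamma-1}a^{\gamma}$ with $a\in\mathbb F_3^*\setminus\{4^{-1}\}=\{-1\}$. That is the single value $1$, so only the anisotropic orbit appears.

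The fix is what the paper does. Take the whole family $ad_A^i(X)-ad_A^j(X)$ with $2<j<i$ and $i-j\equiv 2\pmod 4$; your polynomial is the case $(i,j)=(6,4)$ up to sign.
\begin{itemize}
\item For nilpotent $A$, $A^{j-1}=0$.
\item For split $A$, $A^2=I$, so the factor $2^{i-j}A^{i-j}-1$ vanishes.
\item For $A=e-f$, the value is a nonzero scalar times $A^{j-1}ad_A(X)$. Its determinant is, up to sign, $(b_1+b_2)^2+4b_3^2$. This form is anisotropic over $\mathbb F_3$ and takes both nonzero values.
\end{itemize}
So each member of the family has image exactly one split orbit, one anisotropic orbit, and $0$.
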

\begin{proof}
If $q =3$, then $\Sl_2(q)$ contains exactly one split semisimple orbit and one anisotropic semisimple orbit. For each $i,j$ with $0<2<j<i$ and $i-j \equiv 2 \mod{4}$, we consider the words $f(A,X):= ad_A^i(X) - ad_A^j(X)$. Then, $f(A,X) = 2^{i-1}A^{i-1}ad_A(X) - 2^{j-1}A^{j-1}ad_A(X) = 2^{j-1}A^{j-1}(2^{i-j}A^{i-j}-1)ad_A(X) = 2^{j-1}A^{j-1}(A^{i-j}-1)ad_A(X)$. If $\det(A)=0$, then $A^2 =0$, thus $f(A,X) =0$. Also, for $A$ to be split semisimple and $q=3$, $A^2=I$. Thus, $f(A,X) =0$ in this case as well. So, we let $A = e+af$ with $a \notin {\F_q^*}^2$. Since $q=3$, we have $A =e-f$. Then $A^2 = -I$ and $f(A,X) = 2^{j-1}A^{j-1}((-1)^{\frac{i-j}{2}}-1)ad_A(X)$. As $i-j \equiv 2 \mod{4}$, $f(A,X) = -2^{j}A^{j-1}ad_A(X)$. For $X = b_1e+b_2f+b_3h \ne 0$ and $A = e-f$, $ad_A(X) = [e-f,b_1e+b_2f+b_3h] = -2b_3e-2b_3f+(b_1+b_2)h$ with determinant $-(b_1+b_2)^2 -4b_3^2$. Thus, $\det (f(A,X)) = -4^{j}(-1)^{j-1} ((b_1+b_2)^2 + 4b_3^2)$, which is zero if and only if $(b_1+b_2)^2 + 4b_3^2= 0$. This gives $-1$ is a square in $\F_3$, which is a contradiction. Hence, the image has no nilpotents. Further, the possible determinant values taken by $f(A,X)$ are $4^{j}(-1)^{j-1}b$, $b \in \F_3$. Hence, all determinant values are attained by the elements of the image. Consequently, the image contains exactly one split semisimple orbit, one anisotropic semisimple orbit, and no nilpotents.  

Now, let $q = 4\ell + 3$ for some $\ell \in \mathbb N$. Then $2\ell +1$ is an odd divisor of $q-1$. Hence, if $w_{m,n}$ satisfies the hypothesis of Proposition~\ref{Prop: few semisimple orbits for odd gamma} for c=2, then for $\gamma = 2\ell +1$, the set of non-zero determinant values attained by elements of $w_{m,n}(\Sl_2(q))$ is $\{-4^{2\ell}a^{2\ell +1} \mid a \in \F_q\} = \{4^{2\ell}, -4^{2\ell}\}$. Evidently, $\Sl_2(q)$ contains exactly one split semisimple orbit corresponding to the determinant value $-4^{2\ell}$, and exactly one anisotropic semisimple orbit corresponding to the determinant value $4^{2\ell}$, and no nilpotents.
\end{proof}

\begin{corollary}\label{Coro: exactly one anisotropic orbit}
Let $q \equiv 3 \mod{4}$. Let $\mathcal O$ be an anisotropic semisimple orbit of $\Sl_2(\F_q)$. Then there exists Lie polynomials whose image over $\Sl_2(\F_q)$ is $\mathcal{O} \cup \{0\}$.
\end{corollary}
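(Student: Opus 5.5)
The plan is to specialise Proposition~\ref{Prop: few semisimple orbits for odd gamma} with $c=q-1$ and $\gamma=\frac{q-1}{2}$, and then rescale the resulting word to reach an arbitrary anisotropic orbit.

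First recall the orbit picture from Table~\ref{tab:orbits}. Since $q\equiv 3 \mod 4$, $-1$ is a non-square. So an element of determinant $-a$ is anisotropic exactly when $-a$ is a non-zero square, i.e.\ when its determinant lies in ${\mathbb F_q^*}^2$. Hence the anisotropic orbits are in bijection with the $\frac{q-1}{2}$ non-zero squares $d$, each orbit being $\{P : \det P = d\}$.

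\textbf{Step 1: $q>3$.} Then $\gamma=\frac{q-1}{2}$ is an odd divisor of $q-1$ with $\gamma>1$. With $c=q-1$, Proposition~\ref{Prop: few semisimple orbits for odd gamma} says that $w_{m,n}(\Sl_2(q))$ consists of $0$ together with semisimple elements of determinant $-4^{\gamma-1}a^{\gamma}$, where $a$ ranges over the admissible set. That set is the non-squares, as the proof shows ($2^{c}a^{c/2}-1=a^{\frac{q-1}{2}}-1\neq 0$ exactly for non-squares $a$). For a non-square $a$ we have $a^{\gamma}=a^{\frac{q-1}{2}}=-1$, so every non-zero element of the image has the single determinant $\delta:=4^{\gamma-1}$, a non-zero square. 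Since the proposition shows this value is actually attained, the image is exactly $\mathcal O_\delta\cup\{0\}$ with $\mathcal O_\delta$ anisotropic; this agrees with the count $\frac{q-1}{2\gamma}=1$.

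\textbf{Step 2: the parameters exist.} This is the main bookkeeping step. Take $m=q-1$ and $n=\frac{q-1}{2}$, so that $2m\equiv 0$ and $2n+m\equiv 0 \mod (q-1)$. Set $\alpha_s=\beta_s+(q-1)$. Choose $\beta_1,\beta_2$ with $\beta_1+\beta_2+n$ odd, let $\beta_3,\dots,\beta_{2m_1}$ be even and the remaining $\beta_s$ odd, and take pairs $(i_r,j_r)$ with $i_r+j_r$ odd. As in Remark~\ref{remark:Few semisimple orbits}, the total $\sum_s\beta_s+\sum_r(i_r+j_r)$ is then odd. Enlarging one $\beta_s$ by multiples of $2$, or one $i_r$ by even amounts, preserves all parity conditions. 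This lets us hit any odd residue mod $q-1$, in particular $\gamma$, while making $\sum\beta_s+\sigma-2n-m-1>0$.

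\textbf{Step 3: $q=3$.} Here Proposition~\ref{Prop: few semisimple orbits for odd gamma} does not apply, since no odd $\gamma>1$ divides $2$. There is only one anisotropic orbit, $\det=1$, so we need a word whose image is that orbit together with $0$. We use Lemma~\ref{prop: few semisimple orbits w_m,n} directly with $\alpha_s-\beta_s=2=q-1$. Each factor $(4a-1)^2$ is non-zero only for $a=-1$, the non-square, where it equals $1$. Choosing $2n+m$ even forces $b^{2n+m}=1$ for $b\neq 0$. So every non-zero value of the image has one fixed determinant, namely a sign times a power of $4=1$ times $(-1)^{E}$, where $E$ is the $a$-exponent. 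Adjusting the parity of $E$ (equivalently the case (1)/(2) sign) makes this determinant $1$, the anisotropic value.

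\textbf{Step 4: reach an arbitrary anisotropic $\mathcal O$.} Let $\det\mathcal O=d\in{\mathbb F_q^*}^2$. Since $\delta$ is also a non-zero square, there is $\lambda\in\mathbb F_q^*$ with $\lambda^2\delta=d$. The scaled word $\lambda w_{m,n}\in\mathcal F_2$ has image $\lambda(\mathcal O_\delta)\cup\{0\}$. Scaling by $\lambda\ne 0$ multiplies determinants by $\lambda^2$ and preserves the orbit structure, so this image is $\mathcal O\cup\{0\}$.

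Infinitely many such words arise by shifting the parameters in Step~2 by multiples of $q-1$. The only delicate point is verifying that the parity constraints of Step~2 are simultaneously satisfiable; the rest follows directly from the earlier results.
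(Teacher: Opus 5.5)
Your proposal is correct and follows the paper's own argument: specialise Proposition~\ref{Prop: few semisimple orbits for odd gamma} with $c=q-1$ and $\gamma=\frac{q-1}{2}$ to get a single anisotropic determinant value, then rescale the word by a suitable $\lambda$. Your constant $4^{\gamma-1}$ is the right one (the paper's proof writes $4^{\frac{\gamma-3}{2}}$), and your separate treatment of $q=3$ covers a case the paper passes over, since there $\gamma=1$ falls outside the proposition's hypothesis $\gamma>1$. One correction to Step~3: there is no parity to adjust, because in case (1) with $m$ even the exponent $E$ is forced to be odd and the sign is forced to be $-1$, so the determinant is automatically $1$. Also, $(4a-1)^2$ does not vanish at $a=0$; the value $a=0$ is instead killed by the factor $a^{E}$.
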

\begin{proof}
Let $\mathcal O$ contain elements with determinant equal to $\lambda^2$, for some fixed $\lambda \in \F_q\backslash\F_q^2$. Let $w$ be any Lie polynomial satisfying the hypothesis of Proposition \ref{Prop: few semisimple orbits for odd gamma} for $c=q-1$. Then for $\gamma = \frac{q-1}{2}$, the image is $w\big(\Sl_2(\F_q)\big) = \{A \in \Sl_2(\F_q) \mid \det(A)= 4^{\frac{\gamma -3}{2}}\} \cup \{0\}$. Thus, the Lie polynomial $2^{-\frac{\gamma -3}{2}}\lambda w$ contains semisimple elements with determinant equal to $4^{-\frac{\gamma -3}{2}}\lambda^2 (4^{\frac{\gamma -3}{2}}) = \lambda^2$. Consequently, $2^{-\frac{\gamma -3}{2}}\lambda w\big(\Sl_2(\F_q)\big)$ is $\mathcal{O} \cup \{0\}$.  
\end{proof}

\begin{lemma}\label{prop: few semisimple orbits [w_1, w_m,n]}
Consider the Lie polynomials $w_{m,n}$ for $m\geq 1$, where (i) $i_r + j_r$ is odd for each $r$; (ii) $\alpha_m-\beta_m$ is positive and even for all $m$; (iii) $\beta_1 + \beta_2 +n$ is odd. Now we consider the Lie polynomial
$$w_{m,n}^0 = \bigg[\left[ad_A^{\alpha_0}(X) - ad_A^{\beta_0}(X), [ad_A^{i_0}(X), ad_A^{j_0}(X)] \right], w_{m,n} \bigg]$$
where $i_0 + j_0$, $\beta_0$ and $n$ are odd, $\alpha_0 - \beta_0$ is positive and even. Further assume $\sum_{s=0}^m \beta_s + \sigma_0 -2n - m > 4$ where $\sigma_0 = \sum_{r=0}^n (i_r+j_r)$. If $\beta_3, \beta_4, \dots, \beta_{m-1}$ and $m$ are odd, and $\beta_{m}$ is even, then the image of the word $w_{m,n}^0$ over $\Sl_2(q)$ contains only semisimple elements with determinant equal to 
$$-4^{\sum_{s=0}^m\beta_s+\sigma_0 -1} a^{\sum_{s=0}^m \beta_s + \sigma_0 -2n-m-3} \big(2^{\alpha_0-\beta_0}a^{\frac{\alpha_0-\beta_0}{2}}-1\big)^2\dots
\big(2^{\alpha_m-\beta_m}a^{\frac{\alpha_m-\beta_m}{2}}-1\big)^2 b^{2n + m +3},$$
where $a,b \in \F_q$.
\end{lemma}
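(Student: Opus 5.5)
Throughout, I take $A=e+af$ and $X=b_1e+b_2f+b_3h$, using Remark~\ref{one variable over representatives} to reduce to orbit representatives. I write $B:=ad_A(X)$ and $\Delta:=(ab_1-b_2)^2-4ab_3^2$. The two basic relations are $AB=-BA$ and $A^2=aI$. From these, $B^2=-\det(B)I=\Delta I$, and $[A,B]=2AB$. The plan is to compute the two brackets in $w_{m,n}^0$ separately as explicit multiples of monomials in $A$ and $B$, and then bracket them.

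\textbf{Step 1: the outer entry.} The first entry is $u:=\left[ad_A^{\alpha_0}(X)-ad_A^{\beta_0}(X),[ad_A^{i_0}(X),ad_A^{j_0}(X)]\right]$, which is exactly $w_1$ with parameters $(\alpha_0,\beta_0,i_0,j_0)$. By Lemma~\ref{higher ad maps} and Proposition~\ref{commutator of Engel words},
\[
[ad_A^{i_0}(X),ad_A^{j_0}(X)]=2^{i_0+j_0-1}a^{\frac{i_0+j_0-3}{2}}\Delta A .
\]
Since $\beta_0$ is odd, Lemma~\ref{higher ad maps} gives
\[
ad_A^{\alpha_0}(X)-ad_A^{\beta_0}(X)=2^{\beta_0-1}a^{\frac{\beta_0-1}{2}}\bigl(2^{\alpha_0-\beta_0}a^{\frac{\alpha_0-\beta_0}{2}}-1\bigr)B .
\]
Their bracket is a multiple of $[B,A]=-2AB$. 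So $u=c_0\,AB$, with $c_0$ an explicit product of a power of $2$, a power of $a$, the factor $\bigl(2^{\alpha_0-\beta_0}a^{\frac{\alpha_0-\beta_0}{2}}-1\bigr)$, and $\Delta$.

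\textbf{Step 2: the inner entry.} Under the hypothesis that $\beta_3,\dots,\beta_{m-1}$ and $m$ are odd and $\beta_m$ is even, the computation in the proof of Lemma~\ref{prop: few semisimple orbits w_m,n} gives $w_{m,n}(A,X)=c_1\,B^m$. Here $c_1$ carries a sign, $2^{\sum_{s\ge1}\beta_s+\sigma-1}$, the power $a^{\frac{\sum\beta_s+\sigma-2n-m}{2}}$, the product $\prod_{s\ge1}\bigl(2^{\alpha_s-\beta_s}a^{\frac{\alpha_s-\beta_s}{2}}-1\bigr)$, and $\Delta^n$. I would redo both parity cases for $\beta_1$ to confirm that the $B^m$ form (not $AB^m$) holds in each. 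That proof only wrote out Case 1 for this parity pattern, so I need to check the exponent of $a$ in Case 2.

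\textbf{Step 3: the final bracket.} Since $m$ is odd, $B^m=\Delta^{\frac{m-1}{2}}B$. Then
\[
w_{m,n}^0=c_0c_1\Delta^{\frac{m-1}{2}}[AB,B]=2c_0c_1\Delta^{\frac{m-1}{2}}AB^2=2c_0c_1\Delta^{\frac{m+1}{2}}A ,
\]
using $[AB,B]=AB^2-BAB=2AB^2$. Taking determinants gives $\det(w_{m,n}^0)=-a\bigl(2c_0c_1\Delta^{\frac{m+1}{2}}\bigr)^2$. Collecting the factors should give:
\begin{itemize}
\item the power $4^{\sum_{s=0}^m\beta_s+\sigma_0-1}$;
\item the squared factors $\bigl(2^{\alpha_s-\beta_s}a^{\frac{\alpha_s-\beta_s}{2}}-1\bigr)^2$ for $s=0,\dots,m$;
\item the power $\Delta^{2+2n+(m+1)}=\Delta^{2n+m+3}$;
\item a power of $a$, which must be checked against the claimed exponent $\sum\beta_s+\sigma_0-2n-m-3$.
\end{itemize}

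\textbf{Main obstacle.} The main obstacle is the exponent bookkeeping for $a$ and $2$, and the overall sign. I would carefully track the exponents $\frac{\beta_0-1}{2}$, $\frac{i_0+j_0-3}{2}$, the $w_{m,n}$ exponent, and the extra $a$ from $\det A=-a$. The role of the hypothesis $\sum\beta_s+\sigma_0-2n-m>4$ is to make the $a$-exponent positive, so the formula is valid (and gives $0$) at $a=0$. Finally, the determinant is a nonzero multiple of $\Delta^{2n+m+3}$ whenever $a\neq0$, so it vanishes only when the element is zero; hence there are no nilpotents. As in Proposition~\ref{prop: w_n image}, surjectivity of $(ax_1-x_2)^2-4ax_3^2$ for $a\in\F_q^*$ lets me replace $\Delta$ by an arbitrary $b\in\F_q$.
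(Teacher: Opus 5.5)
Your proposal is correct and is essentially the paper's proof: the first entry is a scalar multiple of $AB$, $w_{m,n}$ is a scalar multiple of $B^m$, and $[AB,B^m]=2AB^{m+1}$ for odd $m$ (reducing $B^m=\Delta^{\frac{m-1}{2}}B$ first is only a cosmetic variant). The Case~2 ($\beta_1$ odd) check you flag does go through, with the same $B^m$ form and the same exponent $\frac{1}{2}\bigl(\sum_{s\ge 1}\beta_s+\sigma-2n-m\bigr)$ of $a$; one small correction is that for $a\neq 0$ the determinant need not be a nonzero multiple of $\Delta^{2n+m+3}$, since the factors $2^{\alpha_s-\beta_s}a^{\frac{\alpha_s-\beta_s}{2}}-1$ can vanish, but this is harmless because $w_{m,n}^0$ is a scalar multiple of $A$ with $\det A=-a\neq 0$, so a vanishing determinant still forces $w_{m,n}^0=0$.
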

\begin{proof}
Let us simply denote by $w = w_{m,n}^0$. We let $A = e+af$ and $X=b_1e+b_2f+b_3h$, for some $a,b_1,b_2,b_3 \in \F_q$. By Proposition~\ref{prop: w_n image} and Lemma~\ref{prop: few semisimple orbits w_m,n}, 
\begin{align*}
&w(A,X) = -2^{\beta_0+ i_0+j_0 + \sum_{s=1}^m \beta_s + \sigma -2} a^{\frac{\beta_0 + i_0+j_0 + \sum_{s=1}^m\beta_s + \sigma -2n-m-4}{2}} \big(2^{\alpha_0-\beta_0} a^{\frac{\alpha_0-\beta_0}{2}}-1\big)\\& \big(2^{\alpha_1-\beta_1}a^{\frac{\alpha_1-\beta_1}{2}}-1\big)\cdots \big(2^{\alpha_m-\beta_m}a^{\frac{\alpha_m-\beta_m}{2}}-1\big)\big((ab_1-b_2)^2-4ab_3^2 \big)^{n+1} [A\ ad_A(X), (ad_A(X))^m]\\
&= -2^{\sum_{s=0}^m\beta_s+\sum_{r=0}^n (i_r+j_r)-2}a^{\frac{\sum_{s=0}^m \beta_s + \sum_{r=0}^n (i_r+j_r)-2n-m-4}{2}} \big(2^{\alpha_0-\beta_0}a^{\frac{\alpha_0-\beta_0}{2}}-1\big) \cdots \\&  \cdots \big(2^{\alpha_m-\beta_m}a^{\frac{\alpha_m-\beta_m}{2}}-1\big)\big((ab_1-b_2)^2-4ab_3^2 \big)^{n+1}(2A (ad_A(X))^{m+1})\\
& = -2^{\sum_{s=0}^m\beta_s+\sum_{r=0}^n (i_r+j_r)-1}a^{\frac{\sum_{s=0}^m \beta_s + \sum_{r=0}^n (i_r+j_r)-2n-m-4}{2}} \big(2^{\alpha_0-\beta_0}a^{\frac{\alpha_0-\beta_0}{2}}-1\big) \cdots \\& \cdots \big(2^{\alpha_m-\beta_m}a^{\frac{\alpha_m-\beta_m}{2}}-1\big)\big((ab_1-b_2)^2-4ab_3^2 \big)^{n+1}A (ad_A(X))^{m+1}.
\end{align*}
Thus,
\begin{align*}
&\det(w(A,X)) =  4^{\sum_{s=0}^m\beta_s+\sum_{r=0}^n (i_r+j_r)-1} a^{\sum_{s=0}^m \beta_s + \sum_{r=0}^n (i_r+j_r)-2n-m-4}\big(2^{\alpha_0-\beta_0}a^{\frac{\alpha_0-\beta_0}{2}}-1\big)^2\cdots \\& \cdots \big(2^{\alpha_m-\beta_m}a^{\frac{\alpha_m-\beta_m}{2}}-1\big)^2\big((ab_1-b_2)^2-4ab_3^2 \big)^{2n+2}(-a) (-1)^{m+1}\big((ab_1-b_2)^2-4ab_3^2 \big)^{m+1}\\
& =  -4^{\sum_{s=0}^m\beta_s+\sum_{r=0}^n (i_r+j_r)-1} a^{\sum_{s=0}^m \beta_s + \sum_{r=0}^n (i_r+j_r)-2n-m-3} \big(2^{\alpha_0-\beta_0}a^{\frac{\alpha_0-\beta_0}{2}}-1\big)^2 \cdots \\& \cdots \big(2^{\alpha_m-\beta_m}a^{\frac{\alpha_m-\beta_m}{2}}-1\big)^2\big((ab_1-b_2)^2-4ab_3^2 \big)^{2n + m +3}.
\end{align*}
Thus,
\begin{align*}
\det(w(A,X)) = & -4^{\sum_{s=0}^m\beta_s+\sum_{r=0}^n (i_r+j_r)-1} a^{\sum_{s=0}^m \beta_s + \sum_{r=0}^n (i_r+j_r)-2n-m-3}\\ &\big(2^{\alpha_0-\beta_0}a^{\frac{\alpha_0-\beta_0}{2}}-1\big)^2\dots \big(2^{\alpha_m-\beta_m}a^{\frac{\alpha_m-\beta_m}{2}}-1\big)^2 b^{2n + m +3},
\end{align*}
for some $b \in \F_q$.
\end{proof}

\begin{proposition}\label{Prop: few semisimple orbits for even gamma}
Consider the Lie polynomial $w_{m,n}^0$ defined in previous Lemma~\ref{prop: few semisimple orbits [w_1, w_m,n]} with the assumption that $\alpha_s-\beta_s =2$ for all $s \in \{0,1,\dots,m\}$. Let $\gamma$ be any even divisor of $(q-1)$. If $\sum_{s=0}^m\beta_s + \sum_{r=0}^n(i_r+j_r) \equiv \gamma \mod (q-1)$, then for any $m$ and $n$ satisfying $2m +2 \equiv 0 \mod{(q-1)}$ and $2n + m + 3 \equiv 0 \mod{(q-1)}$, $w_{m,n}(\Sl_2(q))$ contains only semisimple elements with determinant equal to 
$$-4^{\gamma-1}a^{\gamma},$$
where $a \in \F_q$. Evidently, $w\big(\Sl_2(q)\big)$ contains only $\frac{q-1}{\gamma}$ semisimple orbits, all of them are split semisimple.
\end{proposition}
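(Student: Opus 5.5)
The plan is to substitute the parameter choices into the determinant formula of Lemma~\ref{prop: few semisimple orbits [w_1, w_m,n]}, reduce the exponents modulo $q-1$, and then count the surviving determinant values. This mirrors the proof of Proposition~\ref{Prop: few semisimple orbits for odd gamma} in the case $c=2$.

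\textbf{Step 1: simplify the determinant.} By Lemma~\ref{prop: few semisimple orbits [w_1, w_m,n]}, every element of $w_{m,n}^0(\Sl_2(q))$ is semisimple or zero. Its determinant is
$$-4^{\sum_{s=0}^m\beta_s+\sigma_0-1}\, a^{\sum_{s=0}^m\beta_s+\sigma_0-2n-m-3}\,(4a-1)^{2(m+1)}\, b^{2n+m+3},$$
using $\alpha_s-\beta_s=2$ for every $s$. I then reduce each exponent modulo $q-1$, valid for nonzero bases:
\begin{itemize}
\item since $2n+m+3\equiv 0$, the exponent of $a$ is $\equiv \gamma$, and $b^{2n+m+3}$ equals $1$ for $b\neq 0$;
\item since $2m+2\equiv 0$, the factor $(4a-1)^{2m+2}$ equals $1$ for $a\neq 4^{-1}$;
\item the power of $4$ reduces to $4^{\gamma-1}$.
\end{itemize}
When $a=0$, $b=0$ or $a=4^{-1}$, the determinant is $0$. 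The lemma then gives that the element itself is $0$, because nonzero images have nonzero determinant. So the nonzero elements of the image have determinant exactly $-4^{\gamma-1}a^{\gamma}$ with $a\in\F_q^*\setminus\{4^{-1}\}$.

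\textbf{Step 2: recover all of $\F_q^*$.} I need to show that excluding $a=4^{-1}$ loses no value. The map $a\mapsto a^\gamma$ on the cyclic group $\F_q^*$ is $\gamma$-to-one with $\gamma\ge 2$. Hence some $a'\ne 4^{-1}$ satisfies $(a')^\gamma=(4^{-1})^\gamma$. The set of values is therefore $\{-4^{\gamma-1}t : t\in(\F_q^*)^\gamma\}$, which has exactly $\frac{q-1}{\gamma}$ elements. Each value corresponds to one semisimple orbit, since nonzero orbits are classified by determinant (Table~\ref{tab:orbits}).

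\textbf{Step 3: show all orbits are split.} Because $\gamma$ is even, every $t=a^\gamma$ is a square, and $4^{\gamma-1}$ is a square as well. So every determinant has the form $-\lambda^2$ with $\lambda\ne 0$. By Table~\ref{tab:orbits}, an element $e+a''f$ with $\det=-a''$ is split exactly when $a''=\lambda^2$, that is, when $\det=-\lambda^2$. Hence all $\frac{q-1}{\gamma}$ orbits are split semisimple.

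\textbf{Main obstacle.} I expect the delicate step to be confirming that the hypotheses of Lemma~\ref{prop: few semisimple orbits [w_1, w_m,n]} are satisfied. This requires $n$ odd, $m$ odd (forced since $2m+2\equiv0$ and $2n+m+3\equiv0$ modulo the even number $q-1$), the parities of the $\beta_s$, and the positivity condition $>4$. I also need the reduction of the $4$-power and of the $a$-exponent modulo $q-1$ to be legitimate; it is, because both bases are nonzero. Everything else is bookkeeping parallel to the odd-$\gamma$ proposition.
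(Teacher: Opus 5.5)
Your Steps 1--3 follow the paper's proof. You substitute $\alpha_s-\beta_s=2$ into the determinant formula of the preceding Lemma, use $2m+2\equiv 0$ and $2n+m+3\equiv 0 \pmod{q-1}$ to remove $(4a-1)^{2m+2}$ and $b^{2n+m+3}$, and use that $a\mapsto a^{\gamma}$ is $\gamma$-to-one on $\F_q^*$ to recover the excluded value $a=4^{-1}$. You then count $\frac{q-1}{\gamma}$ determinants and use that $\gamma$ is even to write each as $-\lambda^2$. Given the Lemma, every step is valid, and you treat the degenerate values $a=0$, $b=0$, $a=4^{-1}$ more explicitly than the paper does.

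The check you postponed as the ``main obstacle'' is not bookkeeping, though. It fails, so the proposition is true only vacuously.

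\begin{itemize}
\item The Lemma's own computation gives $w^0_{m,n}(e+af,X)=\lambda\,(e+af)$ for a scalar $\lambda$, because $m$ is odd and so $(ad_A(X))^{m+1}$ is a scalar matrix. Hence every determinant equals $-a\lambda^2$, and the exponent $\sum_{s=0}^m\beta_s+\sigma_0-2n-m-3$ of $a$ must be odd.
\item Concretely, $\sigma_0$ is a sum of $n+1$ odd terms with $n$ odd, so it is even.
\item The sum $\sum_{s=0}^m\beta_s$ consists of the odd number $\beta_0$, the even number $\beta_1+\beta_2$, an even number ($m-3$) of odd terms $\beta_3,\dots,\beta_{m-1}$, and the even number $\beta_m$. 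So it is odd.
\item Since $q-1$ is even, $\sum_{s=0}^m\beta_s+\sigma_0\equiv\gamma \pmod{q-1}$ is impossible for even $\gamma$. The paper's assertion that this sum is ``always even'', and can be matched to any even $\gamma$, is wrong.
\end{itemize}

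Moreover, take any word of this shape with $\alpha_s-\beta_s=2$. Choosing $a$ a nonsquare (so automatically $a\neq 4^{-1}$) and $X$ with $\lambda\neq 0$ puts anisotropic elements into its image. So no choice of parameters can realize the conclusion that all orbits are split.

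Your derivation, like the paper's, is formally correct but describes no actual Lie polynomial. It therefore cannot support the subsequent corollary that produces a single split orbit together with $0$.
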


\begin{proof}
If $\alpha_s-\beta_s =2$ for all $s \in \{0,1,\dots,m\}$, then by Lemma~\ref{prop: few semisimple orbits [w_1, w_m,n]}, the Lie polynomial $w = [[ad_A^{\alpha_0}(X)-ad_A^{\beta_0}(X), [ad_A^{i_0}(X),ad_A^{j_0}(X)]], w_{m,n}]$ contains only semisimple orbits with determinant values equal to $-4^{\sum_{s=0}^m\beta_s+\sum_{r=0}^n (i_r+j_r)-1} a^{\sum_{s=0}^m \beta_s + \sum_{r=0}^n (i_r+j_r)-2n-m-3} \big(4a-1\big)^{2m+2} b^{2n + m +3}$, for some $a, b \in \F_q$. We first claim that $\sum_{s=0}^m\beta_s + \sum_{r=0}^n (i_r+j_r)$ is always even. This follows directly by the assumption that $i_r+j_r$ is odd for all $r$, $\beta_0, n, \beta_3, \beta_4, \beta_{m-1}$, $m$ are odd, $\beta_m$ and $\beta_1+\beta_2$ are even. Also, for any even integer $\gamma$, these variables can be chosen such that $\sum_{s=0}^m \beta_s + \sum_{r=0}^n (i_r+j_r) \equiv \gamma \mod (q-1)$. Now, if $\sum_{s=0}^m\beta_s + \sum_{r=0}^n(i_r+j_r) \equiv \gamma \mod (q-1)$, and $m$, $n$ satisfies $2m +2 \equiv 0 \mod (q-1)$ and $2n + m + 3 \equiv 0 \mod(q-1)$, then $w(\Sl_2(q))$ contains only semisimple elements with determinant equal to $-4^{\gamma-1}a^{\gamma}$,
where $a \in \F_q \backslash \{0, 4^{-1}\}$. Since $\gamma >1$, $\{-4^{\gamma-1}a^{\gamma} \mid a \in \F_q \backslash \{0, 4^{-1}\}\} = \{-4^{\gamma-1}a^{\gamma} \mid a \in \F_q^*\}$ which is of size $\frac{q-1}{\gamma}$. Finally, since $\gamma$ is even, $4^{\gamma-1}a^{\gamma}$ is always a square in $\F_q^*$. Consequently, $w\big(\Sl_2(q)\big)$ contains only split semisimple orbits. 
\end{proof}

\begin{corollary}
Let $\mathcal O$ be a split semisimple orbit of $\Sl_2(q)$. Then there exists Lie polynomials whose image over $\Sl_2(q)$ is $\mathcal{O} \cup \{0\}$.
\end{corollary}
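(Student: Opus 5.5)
Mirroring Corollary~\ref{Coro: exactly one anisotropic orbit}, I would take $\gamma = q-1$ in Proposition~\ref{Prop: few semisimple orbits for even gamma} and then rescale the Lie polynomial by a scalar.

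Since $q$ is odd, $\gamma=q-1$ is an even divisor of $q-1$. I then choose parameters $m,n,\alpha_s,\beta_s,i_r,j_r$ satisfying the hypotheses of Lemma~\ref{prop: few semisimple orbits [w_1, w_m,n]} and Proposition~\ref{Prop: few semisimple orbits for even gamma}:
\begin{itemize}
\item $m$ odd with $2m+2\equiv 0 \pmod{q-1}$, for example $m=q-2$;
\item $n$ odd with $2n+m+3\equiv 0 \pmod{q-1}$;
\item $\alpha_s-\beta_s=2$ for all $s$;
\item the parity conditions on $\beta_0,\beta_3,\dots,\beta_m$ and on $\beta_1+\beta_2+n$;
\item the total $\sum_{s}\beta_s+\sum_r(i_r+j_r)\equiv q-1 \pmod{q-1}$ and large enough that the positivity condition holds.
\end{itemize}
The proof of Proposition~\ref{Prop: few semisimple orbits for even gamma} shows this total is automatically even, and any even residue can be reached by enlarging some $i_r+j_r$ by multiples of $2$. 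So these constraints are compatible, provided the congruences on $m$ and $n$ can be solved with the required parities.

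That parity check is the main obstacle. Take $m=q-2$, which is odd. We need $2n\equiv -(q+1)\equiv -2 \pmod{q-1}$, so $n\equiv -1 \pmod{(q-1)/2}$. Write $n=\tfrac{q-1}{2}t-1$.
\begin{itemize}
\item If $\tfrac{q-1}{2}$ is odd, pick $t$ even to make $n$ odd.
\item If $\tfrac{q-1}{2}$ is even, then $n$ is automatically odd.
\end{itemize}
Either way a valid $n$ exists. If the parity bookkeeping in the Lemma turns out to force a different residue, I would fall back on choosing $\gamma=2$ or the smallest workable even divisor, and handle the resulting $\tfrac{q-1}{\gamma}$ orbits by a separate argument.

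With $\gamma=q-1$, the Proposition gives image determinants $-4^{q-2}a^{q-1}=-4^{-1}$ for $a\ne 0$, together with $0$. Hence $w(\Sl_2(q))=\{P:\det P=-4^{-1}\}\cup\{0\}$, and since $-4^{-1}=-(2^{-1})^2$ this is a single split orbit together with $0$. To check that it really is exactly one orbit with no nilpotents, note that the Proposition already excludes nilpotents, and that $a\ne 0$, $a\neq 4^{-1}$ is attained (for instance $a=1$, using $q>3$; for $q=3$ one checks directly).

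Finally, let $\mathcal O$ have determinant $-\mu^2$ with $\mu\in\F_q^*$. Since $\det(cP)=c^2\det P$, the Lie polynomial $2\mu\, w$ has image exactly $\{P:\det P=-\mu^2\}\cup\{0\}=\mathcal O\cup\{0\}$. Infinitely many such polynomials arise by varying the free parameters.
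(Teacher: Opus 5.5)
\textbf{The gap.} The step that fails is your assertion that $N=\sum_{s=0}^m\beta_s+\sum_{r=0}^n(i_r+j_r)$ is automatically even, so that $N\equiv q-1 \pmod{q-1}$ can be arranged. Under the hypotheses of Lemma~\ref{prop: few semisimple orbits [w_1, w_m,n]}, $N$ is always \emph{odd}:
\begin{itemize}
\item $\beta_0$ is odd;
\item $\beta_1+\beta_2$ is even, since $\beta_1+\beta_2+n$ and $n$ are both odd;
\item $\beta_3,\dots,\beta_{m-1}$ are $m-3$ odd numbers, and $m-3$ is even;
\item $\beta_m$ is even;
\item $\sum_{r=0}^n(i_r+j_r)$ is a sum of $n+1$ odd terms, and $n+1$ is even.
\end{itemize}
Since $q-1$ is even, $N\equiv\gamma \pmod{q-1}$ is impossible for every even $\gamma$. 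So the hypotheses of Proposition~\ref{Prop: few semisimple orbits for even gamma} are never satisfied, and there is no $w$ to rescale. Your fallback to $\gamma=2$ or another even divisor fails for the same reason. The paper's own proof of this corollary takes exactly your route and breaks at the same place: the claim in the proof of Proposition~\ref{Prop: few semisimple orbits for even gamma} that this sum is ``always even'' is incorrect.

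\textbf{Why no parameter choice rescues it.} The obstruction is structural, not bookkeeping. Put $A=e+af$ and $B=ad_A(X)$, so $A^2=aI$, $B^2=-\det(B)I$ and $AB=-BA$. Then:
\begin{itemize}
\item the inner bracket $[ad_A^{\alpha_0}(X)-ad_A^{\beta_0}(X),[ad_A^{i_0}(X),ad_A^{j_0}(X)]]$ is a scalar multiple of $AB$;
\item $w_{m,n}$ is a scalar multiple of $B^m$, hence of $B$, since $m$ is odd;
\item $[AB,B]=-2\det(B)\,A$.
\end{itemize}
Hence $w^0_{m,n}(A,X)=c\,A$ for a scalar $c$, and $\det w^0_{m,n}(A,X)=-a c^2$. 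This matches the odd exponent of $a$ in the Lemma's formula.

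For nonsquare $a$ the factors $4a-1$ are nonzero, because $4^{-1}$ is a square. Choosing $X$ with $\det B\neq 0$ then gives $c\neq 0$ and an anisotropic element in the image. For $q=3$ the only admissible value is $a=2$, and the image is exactly the anisotropic orbit together with $0$. So your deferred ``direct check'' would also fail.

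\textbf{How to repair it.} The corollary is a pure existence statement, so the clean repair is to invoke Theorem~\ref{Thm: Image canditates are images}. For $q$ odd, $\Sl_2(q)$ is simple of very good characteristic, and $\mathcal O\cup\{0\}$ contains $0$ and is $\Aut(\Sl_2(q))$-stable.

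An explicit construction of this shape ($w=cA$) would need $c$ to vanish at every nonsquare $a$. One way is to use a factor such as $ad_A^{\beta+q-1}(X)+ad_A^{\beta}(X)=2^{\beta-1}(a^{(q-1)/2}+1)A^{\beta-1}ad_A(X)$ in place of the differences with $\alpha_s-\beta_s=2$.
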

\begin{proof} Let $\mathcal O$ contains elements with determinant equal to $-\lambda^2$, for some fixed $\lambda \in \F_q^*$. Let $w$ be any Lie polynomial satisfying the hypothesis of Proposition~\ref{Prop: few semisimple orbits for even gamma} for $\gamma = q-1$. Then $w\big(\Sl_2(q)\big) = \{A \in \Sl_2(q) \mid \det(A)= -4^{-1}\} \cup \{0\}$. Thus, the Lie polynomial $2\lambda w$ contains semisimple elements with determinant equal to $4\lambda^2 (-4^{-1}) = -\lambda^2$. Consequently, $2\lambda w\big(\Sl_2(q)\big)$ is $\mathcal{O} \cup \{0\}$.
\end{proof}

\section{Lie polynomial images on $\Sl_2(q)$}
In this section, we provide a partial answer to the Question~\ref{question2} for $\Sl_2(q)$ when $q$ is odd.  Kanel-Belov et. al. in~\cite{KMR17} showed that over an algebraically closed field of characteristic $\neq 2$, one can not get a single semisimple conjugacy class together with $0$ as an image. On the contrary, our Theorem~\ref{Thm: Image canditates are images} predicts that it is possible to get such classes as images over a finite field. Indeed, we produce Lie polynomials and get such classes as images. The results are summarised in the following Theorem:

\begin{theorem}\label{Thm: Semisimple orbits for each integer}
Consider the Lie algebra $\Sl_2(q)$, with $q$ odd. Let $t$ be a positive integer. Then, we have the following:
\begin{enumerate}
\item[(1)] The set of all nilpotent elements (and no semisimple elements) is an image of some Lie polynomial. 
\item[(2)] There exist infinitely many fields $\F_p$ of prime order such that for each such field, there exist infinitely many Lie polynomials whose image over $\Sl_2(p)$ contains no nilpotents, exactly $\frac{t}{2}$ split semisimple orbits, and $\frac{t}{2}$ anisotropic semisimple orbits.
\item[(3)] There exist infinitely many fields $\F_p$ of prime order such that for each such field, there exist infinitely many Lie polynomials whose image over $\Sl_2(p)$ contains no nilpotents, no anisotropic semisimple elements, and exactly $t$ split semisimple orbits.
\item[(4)] There exist infinitely many fields $\F_p$ of prime order such that for each such field, there exist infinitely many Lie polynomials whose image over $\Sl_2(p)$ contains no nilpotents, no split semisimple elements, and exactly $t$ anisotropic semisimple orbits.
\end{enumerate}
\end{theorem}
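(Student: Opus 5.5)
Each part of Theorem~\ref{Thm: Semisimple orbits for each integer} will be obtained from the propositions of Sections~\ref{image2}--\ref{image4}. The only new ingredient is a choice of primes $p$ for which $q=p$ lets the orbit counts in those propositions equal $t$. Dirichlet's theorem on primes in arithmetic progressions will supply infinitely many such primes. Infinitely many Lie polynomials will come from the freedom in the exponents $i_r,j_r,\alpha_s,\beta_s$: these are only constrained modulo $q-1$ and by parity and positivity, so they can be shifted by multiples of $2(q-1)$.

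Part (1) is exactly Proposition~\ref{prop:nilotent}: the word $ad_A^2(X)-ad_A^{2q}(X)$ has image the nilpotent elements together with $0$.

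For (2), I read $t$ as even, since otherwise $t/2$ is not an integer. I will use Proposition~\ref{Prop: few semisimple orbits for odd gamma} with $c=2$, which yields exactly $\frac{q-1}{\gamma}$ semisimple orbits, half split and half anisotropic, for any odd divisor $\gamma>1$ of $q-1$. So I need primes $p$ with $p-1=t\gamma$ for some odd $\gamma>1$. I will take primes $p\equiv 1+t \pmod{4t}$, so $p-1=t(1+4k)$, which are infinite in number by Dirichlet; then $\gamma=1+4k$ is odd and exceeds $1$ once $k\ge1$. Next I check that the hypotheses can be met. I need $m,n$ with $2m\equiv 0$ and $2n+m\equiv0 \pmod{p-1}$: taking $m=p-1$ and $n$ a suitable multiple of $(p-1)/2$ works. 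For the parity conditions I need $\beta_1+\beta_2+n$ odd, with the required parities of $\beta_3,\dots,\beta_m$. Then I need $\sum\beta_s+\sigma\equiv\gamma \pmod{p-1}$; this is achievable because the total parity is odd and $\gamma$ is odd, as Remark~\ref{remark:Few semisimple orbits} indicates.

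For (3), Proposition~\ref{Prop: few semisimple orbits for even gamma} gives $\frac{q-1}{\gamma}$ split orbits for an even divisor $\gamma$ of $q-1$. I will take primes $p\equiv 1+2t \pmod{4t}$, so $p-1=2t(2k+1)$, and set $\gamma=2(2k+1)$, which gives $(p-1)/\gamma=t$. The conditions $2m+2\equiv0$ and $2n+m+3\equiv0 \pmod{p-1}$ need $m$ odd: take $m=p-2$ if $p\equiv 3\pmod 4$, or otherwise $m\equiv \frac{p-1}{2}-1$, which is odd when $p\equiv 3\pmod 4$. This parity bookkeeping is where I expect the main obstacle. Tracking the parities of $m$, $n$ and $\sum\beta_s$ against the lemma's hypotheses ($m$ and $n$ odd, $\beta_m$ even, and so on) may force restricting $p$ to a further congruence class. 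I would first try $p\equiv 3\pmod 4$ combined with the above progression, adjusting the modulus to $8t$ if necessary.

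For (4), Proposition~\ref{Prop: few semisimple orbits for odd gamma} with $c=q-1$ gives $\frac{q-1}{2\gamma}$ orbits, all anisotropic, for odd $\gamma$. I will choose primes $p\equiv 1+2t\pmod{4t}$ with $p-1=2t\gamma$ and $\gamma$ odd; Dirichlet again supplies infinitely many, and I need $\gamma>1$. The case $\gamma=1$ can alternatively be handled by scaling, as in Corollary~\ref{Coro: exactly one anisotropic orbit}. Finally, varying the exponents by multiples of $2(p-1)$ gives infinitely many distinct Lie polynomials in each case.
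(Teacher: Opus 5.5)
Your parts (1), (2) and (4) are correct and follow the paper's argument. In (2) you use the sub-progression $p\equiv t+1 \pmod{4t}$ where the paper uses $p\equiv t+1\pmod{2t}$, which makes no difference.

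\textbf{The gap is in (3), and it cannot be fixed by choosing $p$ in a finer congruence class.} In the lemma on $w^0_{m,n}$ used by Proposition~\ref{Prop: few semisimple orbits for even gamma}, the parity hypotheses force the key sum to be odd:
\begin{itemize}
\item $n$ is odd by hypothesis, and $m$ is odd (this is also forced by $2n+m+3\equiv 0 \pmod{p-1}$, since $p-1$ is even).
\item So $\sigma_0=\sum_{r=0}^n(i_r+j_r)$ is a sum of an even number $n+1$ of odd terms, hence even.
\item $\beta_3+\dots+\beta_{m-1}$ is a sum of an even number $m-3$ of odd terms, hence even.
\item $\beta_1+\beta_2$ and $\beta_m$ are even.
\end{itemize}
Hence $\sum_{s=0}^m\beta_s+\sigma_0\equiv\beta_0\equiv 1 \pmod 2$. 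Because $p-1$ is even, this sum can never be congruent to an even $\gamma$ modulo $p-1$, for any $p$. The proposition you invoke is therefore vacuous.

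Its conclusion is in fact false for every admissible choice of exponents. Write $A=e+af$ and $B=ad_A(X)$, so that $AB=-BA$ and $B^2=bI$ with $b=(ab_1-b_2)^2-4ab_3^2$. Then $w^0_{m,n}(A,X)=\kappa\,AB^{m+1}=\kappa\, b^{(m+1)/2}A$, where $\kappa\neq 0$ whenever $a\notin\{0,4^{-1}\}$ and $b\neq 0$. For $a$ a nonsquare, the determinant $-a\kappa^2b^{m+1}$ is minus a nonsquare, so anisotropic elements always lie in the image. The paper's own proof of (3) cites the same proposition, and its sentence claiming the sum is ``always even'' miscounts. So this hole is in the source argument too, not only in yours. (Separately, your remark that $\frac{p-1}{2}-1$ is odd when $p\equiv 3\pmod 4$ has the parity backwards, and $m=p-2$ works for every odd $p$; this is now moot.)

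\textbf{A possible repair.} Reuse the primes from (4): $p\equiv 2t+1\pmod{4t}$, so $p-1=2t\gamma$ with $\gamma$ odd. Take the words of Proposition~\ref{Prop: few semisimple orbits for odd gamma} with $c=p-1$, but replace every difference $ad_A^{\alpha_s}(X)-ad_A^{\beta_s}(X)$ by the sum $ad_A^{\alpha_s}(X)+ad_A^{\beta_s}(X)$.
\begin{itemize}
\item The scalar $2^{c}a^{c/2}-1$ becomes $a^{(p-1)/2}+1$. This vanishes exactly for nonsquare $a$ and equals $2$ for nonzero square $a$.
\item The rest of the computation for $w_{m,n}$ (the case with $m$ even) is unchanged. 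The extra factor $4^m$ equals $1$ because $2m\equiv 0\pmod{p-1}$.
\item So the nonzero determinants in the image are $-4^{\gamma-1}a^{\gamma}$ with $a$ running over the nonzero squares. These are all minus squares, hence split.
\item Their number is $|\{c^{2\gamma}: c\in\F_p^*\}|=(p-1)/(2\gamma)=t$.
\item Nilpotents are still excluded: the value is a scalar multiple of $A$, and that scalar vanishes at $a=0$.
\end{itemize}
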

\begin{proof}
The proof of (1) is done in Proposition~\ref{prop:nilotent}. 

{\bf Proof of (2):} Let $t$ be an even integer. Then $t+1$ and $2t$ are coprime. Thus, by Dirichlet’s theorem on arithmetic progressions, there exist infinitely many primes of the form $t+1 + \ell(2t) = (2 \ell +1)t +1$, $\ell \in \mathbb N$. For each such prime $p$, $p-1$ has an odd divisor $2 \ell +1$ and $\frac{p-1}{2 \ell +1} =t$. Thus, if $w_{m,n}$ is a Lie polynomial satisfying the hypothesis of Proposition \ref{Prop: few semisimple orbits for odd gamma} for c =2 with $q = p = (2 \ell +1)t +1$ and $\gamma = 2 \ell +1>1$, then $w_{m,n}(\Sl_2(p))$ contains no nilpotents, exactly $\frac{t}{2}$ split semisimple orbits, and $\frac{t}{2}$ anisotropic semisimple orbits.

{\bf Proof of (3):} By Dirichlet’s theorem on arithmetic progressions, there exist infinitely many primes of the form $2\ell t+1$, $\ell \in \mathbb N$. For each such prime $p$, $p-1$ has an even divisor $2 \ell$ and $\frac{p-1}{2 \ell } =t$. Thus, if $w$ is a Lie polynomial satisfying the hypothesis of Proposition \ref{Prop: few semisimple orbits for even gamma} with $q = p = 2 \ell t +1$ and $\gamma = 2 \ell$, then $w(\Sl_2(p))$ contains no nilpotents, no anisotropic semisimple elements, and exactly $t$ split semisimple orbits.

{\bf Proof of (4):} Since $4t$ and $2t+1$ are coprime, by Dirichlet’s theorem on arithmetic progressions, there exist infinitely many primes of the form $4\ell t + 2t + 1$, $\ell \in \mathbb N$. For each such prime $p$, $p-1$ has an odd divisor $2 \ell +1$ and $\frac{p-1}{2(2 \ell +1)} = t$. Thus, if $w$ is a Lie polynomial satisfying the hypothesis of Proposition \ref{Prop: few semisimple orbits for odd gamma} for $c=q-1$, then with $q = p = 4\ell t + 2t + 1$ and $\gamma = 2 \ell +1$, $w(\Sl_2(\F_p))$ contains no nilpotents, no split semisimple elements, and exactly $t$ anisotropic semisimple orbits.
\end{proof}


\bibliographystyle{amsalpha}
\bibliography{Lie-algebra-maps}
\end{document}